\newtheorem{thm}{Theorem}
\newtheorem{lem}[thm]{Lemma}
\newtheorem{cor}[thm]{Corollary}
\newtheorem{prop}[thm]{Proposition}
\theoremstyle{definition}
\newtheorem{definition}{Definition}
\theoremstyle{definition}
\newtheorem*{obs}{Observation}
\newcommand{\Z}{\mathbb{Z}}
\newcommand{\N}{\mathbb{N}}
\newcommand{\Q}{\mathbb{Q}}
\newcommand{\K}{\mathbb{K}}
\newcommand{\eme}{\mathcal{m}}
\newcommand{\F}{\mathbb{F}}
\newcommand{\ene}{\mathcal{n}}
\author{Francisco Franco Munoz}
\date{}							
\title{\bf On minimal extensions of rings and applications}
\begin{document}

\newcommand{\Addresses}{{
  \bigskip
  \footnotesize

Francisco Franco Munoz, \textsc{Department of Mathematics, University of Washington,
    Seattle, WA 98195}\par\nopagebreak
  \textit{E-mail address}: \texttt{ffm1@uw.edu}
}}

\maketitle

\abstract{We study minimal extensions of local rings and their restriction to subrings. Some applications to subrings of $\K[x]/x^n$ and $\Z[x]/(p^N, x^n)$ are discussed.}


\section{Introduction}

The study of subrings of a given ring is a natural question, but one that has not been given enough attention in the literature. In \cite{AKKM} (and the references therein) the authors study the subrings of naturally occurring rings, such as $\Z^n$ and $\Z[x]/x^n$. \\
The purpose of this paper is to shed some light on subrings of a local rings and they way those can be built from extensions (precisely, minimal extensions). After some formal preliminaries we obtain our main result (Theorem \ref{main-thm}) that says we can relate subrings of $R$ with those of $S$ through a minimal extension $\varphi: R\to S$ in a very precise way, leading to an exact count. After establishing some basic result on (partial)-valuations, we are able to count the subrings of $\K[x]/x^n$, and obtain (section \ref{sec-4}) results giving precise estimates in the case of finite fields. At the end we briefly extend this discussion to subrings of $\Z[x]/(p^N, x^n)$ (section \ref{sec-5}) where similar counts are obtained.

\subsection{Notation}

All the rings considered are commutative and unital, and all the homomorphisms are unital. 

\noindent By a local ring $(R, \eme_R)$ we mean a ring with a unique maximal ideal $\eme_R$, not necessarily noetherian (sometimes called quasi-local rings). 

\noindent For a ring $R$ denote by $R^{\times}$ its group of units.

\subsection{On previous literature}

We have relied almost entirely on \cite{Franco} (which is self-contained). Some basic results of commutative algebra are assumed and we'll refer to $\cite{SP}$ for more extended discussions.


\section{Minimal extensions}

Let $(R,\eme_R), (S, \eme_S)$ be local rings. We record here few well known elementary results about surjective homomorphisms of local rings.

\begin{prop}\label{prop-1} Let $\varphi: R\to S$ be a surjective homomorphism of local rings. Then

\begin{enumerate}

\item $\eme_R= \varphi^{-1}(\eme_S)$, in other words, $\varphi$ is a \emph{local} homomorphism.

\item $\varphi(\eme_R)= \eme_S$.

\item $\varphi: R\to S$ induces an isomorphism on residue fields $R/\eme_R\cong S/\eme_S$.

\end{enumerate}

\end{prop}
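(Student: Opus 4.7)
The plan is to prove (1) first, from which (2) and (3) will follow essentially formally. The guiding observation is that the preimage of a maximal ideal under a surjective ring homomorphism is maximal, and in a local ring there is only one maximal ideal available to equal it to.

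To establish (1), I would first note that $\varphi^{-1}(\eme_S)$ is a proper ideal of $R$, since $\varphi(1)=1\notin\eme_S$. Next, consider the composition $R \xrightarrow{\varphi} S \to S/\eme_S$; because $\varphi$ is surjective, so is this composition, and its kernel is exactly $\varphi^{-1}(\eme_S)$. The first isomorphism theorem then yields $R/\varphi^{-1}(\eme_S) \cong S/\eme_S$, which is a field, so $\varphi^{-1}(\eme_S)$ is maximal in $R$. Since $R$ is local, the uniqueness of $\eme_R$ forces $\eme_R = \varphi^{-1}(\eme_S)$.

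Once (1) is in hand, (2) follows from the general set-theoretic identity $f(f^{-1}(T))=T$ valid for any surjective $f$: we obtain $\varphi(\eme_R)=\varphi(\varphi^{-1}(\eme_S))=\eme_S$. Similarly (3) is already packaged inside the proof of (1), since the isomorphism $R/\eme_R \cong S/\eme_S$ is precisely the one induced by the surjection $R \to S \to S/\eme_S$ after identifying its kernel via (1).

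The main obstacle here is essentially nonexistent --- each step is a direct application of first-principles facts about ring homomorphisms and local rings --- and the only care required is to invoke the surjectivity of $\varphi$ at the correct junctures: for the surjectivity of the composition $R\to S/\eme_S$ used in (1) and (3), and for the set-theoretic identity $\varphi(\varphi^{-1}(\eme_S))=\eme_S$ in (2).
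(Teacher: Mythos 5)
Your proof is correct, and it takes a slightly different route to part (1) than the paper does. The paper argues by two inclusions: $\varphi^{-1}(\eme_S)\subseteq \eme_R$ because it is a proper ideal in a local ring, and $\eme_R\subseteq \varphi^{-1}(\eme_S)$ by showing that $\varphi(\eme_R)$ is a \emph{proper} ideal of $S$ --- this last step needs a small kernel-and-units argument (if $\varphi(x)=1$ with $x\in\eme_R$, then $1-x\in \mathrm{Ker}(\varphi)\subseteq \eme_R$, forcing $1\in\eme_R$). You instead invoke the standard fact that the preimage of a maximal ideal under a surjection is maximal, proved via the first isomorphism theorem applied to the composite $R\to S\to S/\eme_S$, and then use uniqueness of the maximal ideal of $R$; this bypasses the properness-of-image argument entirely and simultaneously hands you the residue-field isomorphism for part (3), so your (1) and (3) come from a single application of the isomorphism theorem. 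For part (2) the two arguments coincide in substance: both reduce to $\varphi(\varphi^{-1}(\eme_S))=\eme_S$ for surjective $\varphi$, though the paper sandwiches $\varphi(\eme_R)$ between $\eme_S$ and $\eme_S$ using its separately established inclusion $\varphi(\eme_R)\subseteq\eme_S$, while you simply substitute the equality from (1). The paper's route has the minor side benefit of exhibiting $\varphi(\eme_R)\subseteq\eme_S$ directly (the fact actually reused in its proof of (2)); yours is marginally shorter and leans on a more quotable general principle. Both are complete and elementary.
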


\begin{proof}

\begin{enumerate}

\item Since $\varphi^{-1}(\eme_S)$ is a proper ideal it's contained in the maximal ideal so $\varphi^{-1}(\eme_S)\subseteq \eme_R$. So we need $\eme_R \subseteq \varphi^{-1}(\eme_S)$ equivalently $\varphi(\eme_R)\subseteq \eme_S$. Now, for a surjection of rings, the image of an ideal is an ideal. In the case of a local ring, $\varphi(\eme_R)$ is in fact a proper ideal: For if $\varphi(x)=1$ for some $x\in \eme_R$, then $1-x\in Ker(\varphi)\subseteq \eme_R$ so $1\in \eme_R$, a contradiction. Hence $\varphi(\eme_R)\subseteq \eme_S$.

\item From $\varphi^{-1}(\eme_S)\subseteq \eme_R$ get that $\eme_S = \varphi(\varphi^{-1}(\eme_S))\subseteq \varphi(\eme_R)$ since $\varphi$ is surjective, and by above $\varphi(\eme_R)\subseteq \eme_S$ hence equality: $\varphi(\eme_R)= \eme_S$.

\item We have $S/\eme_S\cong \varphi^{-1}(S)/\varphi^{-1}(\eme_S)= R/\eme_R$ by the above.

\end{enumerate}

\end{proof}

\begin{definition} A homomorphism $\varphi: R\to S$ is a minimal extension if it's surjective (in particular it's a \emph{local} homomorphism) and $I=Ker(\varphi)$ is a minimal nonzero ideal of $R$.
\end{definition}

\begin{lem}\label{lem-2} A minimal ideal $I$ satisfies $I\eme_R=0$ and so $I$ is a vector space over the residue field $R/\eme_R$. Moreover its dimension is one. Conversely, suppose that $J$ is an ideal of $R$ such that $J\eme_R=0$ and the dimension of $J$ over $R/\eme_R$ is one. Then $J$ is a minimal ideal. 
\end{lem}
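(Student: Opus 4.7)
The plan is to handle the two directions separately. For the forward direction, I would first observe that minimality forces $I$ to be principal: picking any nonzero $x \in I$, the ideal $(x)\subseteq I$ is nonzero, hence by minimality $(x)=I$. Next, I would consider the product $\eme_R I$, which is an ideal contained in $I$. By minimality, either $\eme_R I = 0$ or $\eme_R I = I$. Since $I=(x)$ is finitely generated, Nakayama's lemma applied to the local ring $R$ rules out the second case, so $\eme_R I = 0$. This lets the $R$-action on $I$ factor through the residue field $k := R/\eme_R$, making $I$ a $k$-vector space. For the dimension, I would note $I = Rx = kx$ (using $\eme_R x = 0$), which is $1$-dimensional; equivalently, any nonzero proper $k$-subspace of $I$ would be an $R$-submodule strictly between $0$ and $I$, contradicting minimality.

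For the converse, the key observation is that once $\eme_R J = 0$, the $R$-submodules of $J$ coincide with the $k$-subspaces of $J$: every $R$-submodule inherits the annihilation by $\eme_R$ and so is a $k$-subspace, and conversely every $k$-subspace is closed under the $R$-action (which factors through $k$), hence is an ideal of $R$ contained in $J$. Since $J$ has $k$-dimension one, its only $k$-subspaces are $0$ and $J$, so $J$ is a minimal nonzero ideal.

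The main obstacle to watch for is the justification of the step $\eme_R I = 0$: one must not overlook that Nakayama requires finite generation. This is exactly why the proof must begin by reducing $I$ to the principal ideal $(x)$ before invoking Nakayama; without that reduction, the possibility $\eme_R I = I$ could not be excluded in a non-noetherian local ring, and the rest of the argument would collapse.
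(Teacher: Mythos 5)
Your proof is correct and follows essentially the same route as the paper: observe that $\eme_R I$ is an ideal contained in $I$, use minimality to force $\eme_R I=0$, and then read off the dimension statement and the converse via the identification of $R$-submodules with $R/\eme_R$-subspaces. The paper's own proof is a one-liner that asserts $I\eme_R=0$ ``by minimality''; your reduction to the principal ideal $(x)$ so that Nakayama excludes the case $\eme_R I=I$ simply supplies the detail the paper leaves implicit, which is a worthwhile precaution in the stated non-noetherian setting.
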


\begin{proof} $I\eme_R\subseteq \eme_R$ and by minimality, $I\eme_R = 0$. The rest is clear.
\end{proof}

We need restriction of homomorphisms:

\begin{prop}\label{prop-3} Let $T\subseteq S$ a subring, and $\varphi: R\to S$ be a surjection of local rings, let $\bar{T}=\varphi^{-1}(T)$. Then $\bar{T}$ local implies $T$ local. Consider the following statements:

\begin{enumerate}

\item $\eme_{\bar{T}} = \eme_R\cap \bar{T}$ 

\item $\eme_{T} = \eme_S\cap T$

\item The composition $T\subseteq S\to S/\eme_S$ is surjective

\item The composition $\bar{T}\subseteq R\to R/\eme_R$ is surjective

\end{enumerate}

Then $1\iff 2$ and $3\iff 4$ and $3\implies 2$.\\

If $\varphi:R\to S$ is a minimal extension then  $\bar{T}$ local iff $T$ local. In that case, we have that $4 \iff (\varphi: \bar{T}\to T \text{ is a minimal extension})$
\end{prop}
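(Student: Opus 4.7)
The proof breaks into the preliminary implication, the three equivalences among statements 1--4, and the two claims under the minimal-extension hypothesis. Throughout, the key transport tool is Proposition \ref{prop-1}: $\varphi$ is local, $\eme_R=\varphi^{-1}(\eme_S)$ and $\varphi(\eme_R)=\eme_S$, and when applicable the same holds for $\varphi|_{\bar{T}}:\bar{T}\to T$, which is surjective with kernel $I:=\ker\varphi\subseteq \bar{T}$. The preliminary ``$\bar{T}$ local $\Rightarrow T$ local'' is then immediate since $T\cong \bar{T}/I$ is a quotient of a local ring.

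For $1\iff 2$, I would check $\varphi(\eme_R\cap \bar{T})=\eme_S\cap T$ (with its preimage counterpart) and, assuming locality, $\varphi(\eme_{\bar{T}})=\eme_T$ by Proposition \ref{prop-1}; the two equalities then transport into each other by image/preimage. $3\iff 4$ is a direct element lift using surjectivity and $\varphi^{-1}(\eme_S)=\eme_R$. For $3\Rightarrow 2$: the composition $T\hookrightarrow S\twoheadrightarrow S/\eme_S$ has kernel $T\cap \eme_S$, so if it is surjective then $T/(T\cap \eme_S)\cong S/\eme_S$ is a field; thus $T\cap \eme_S$ is maximal in the local ring $T$ and therefore equals $\eme_T$.

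Now assume $\varphi$ is a minimal extension. By Lemma \ref{lem-2}, $I\eme_R=0$; since $I\subseteq\eme_R$ this gives $I^2=0$, so every element of $I$ is nilpotent and thus lies in every prime ideal of $\bar{T}$. The maximal ideals of $\bar{T}$ all contain $I$ and hence correspond bijectively to the maximal ideals of $T=\bar{T}/I$, giving $T$ local iff $\bar{T}$ local. For the last equivalence, suppose first that $\varphi|_{\bar{T}}$ is a minimal extension. Picking $0\neq i\in I$, minimality of $I$ in $\bar{T}$ and in $R$ (via Lemma \ref{lem-2}) yields $\operatorname{Ann}_{\bar{T}}(i)=\eme_{\bar{T}}$ and $\operatorname{Ann}_R(i)=\eme_R$, so $\eme_{\bar{T}}=\eme_R\cap \bar{T}$ (statement 1), and the one-dimensional isomorphisms $\bar{T}/\eme_{\bar{T}}\cong I\cong R/\eme_R$ force the residue-field inclusion $\bar{T}/\eme_{\bar{T}}\hookrightarrow R/\eme_R$ to be surjective, which is statement 4. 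Conversely, if statement 4 holds then by the equivalences already shown all of 1--4 do; in particular $\eme_{\bar{T}}\subseteq \eme_R$ gives $I\eme_{\bar{T}}\subseteq I\eme_R=0$, and $\bar{T}/\eme_{\bar{T}}\cong R/\eme_R$ gives $\dim_{\bar{T}/\eme_{\bar{T}}}I=\dim_{R/\eme_R}I=1$, so $I$ is a minimal ideal of $\bar{T}$ by Lemma \ref{lem-2}.

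The main obstacle is this last equivalence: one has to reconcile two vector-space structures on $I$, over $R/\eme_R$ and over $\bar{T}/\eme_{\bar{T}}$, and recognize that statement 4 is exactly the condition identifying the two residue fields, so that the dimension-one criterion of Lemma \ref{lem-2} holds on both sides simultaneously.
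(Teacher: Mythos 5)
Your proof is correct and follows essentially the same route as the paper: the same inclusion/quotient manipulations for the equivalences among 1--4, and for the final equivalence the same mechanism of comparing the two vector-space structures on $I$ (your identity $\operatorname{Ann}_{\bar{T}}(i)=\operatorname{Ann}_R(i)\cap \bar{T}=\eme_R\cap\bar{T}$ is just a repackaging of the paper's computation $x\in\eme_{\bar{T}},\ xI=0\Rightarrow[x]=0$ in $R/\eme_R$, followed by the ``dimension one over both residue fields forces the fields to coincide'' step and the converse via Lemma \ref{lem-2}). The only real variation is the locality transfer under minimality: the paper shows directly that any proper ideal of $\bar{T}$ containing $\varphi^{-1}(\eme_T)$ maps into $\eme_T$ using the square-zero kernel trick, while you observe that $I^2\subseteq I\eme_R=0$ places $I$ in every maximal ideal of $\bar{T}$, so the maximal ideals of $\bar{T}$ and of $T\cong\bar{T}/I$ correspond bijectively --- a slightly cleaner packaging of the same underlying fact.
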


\begin{proof} It's clear that $\bar{T}$ local implies $T$ local, since $\varphi$ is surjective. Assuming that, Proposition \ref{prop-1} is valid for $\varphi:R\to S$ and $\varphi:\bar{T} \to T$.

\begin{itemize}

\item $1\implies 2$: $\eme_T = \varphi(\eme_{\bar{T}}) = \varphi(\eme_R\cap \bar{T})\subseteq \varphi(\eme_R)\cap \varphi(\bar{T}) = \eme_S\cap T$ and the other inclusion $\eme_S\cap T\subseteq \eme_{T}$ is immediate. 

\item $2\implies 1$: $\eme_{\bar{T}} = \varphi^{-1}(\eme_T)=\varphi^{-1}(\eme_S\cap T)= \varphi^{-1}(\eme_S)\cap \varphi^{-1}(T) = \eme_R\cap \bar{T}$.

\item $3\iff 4$: $T\to S\to S/\eme_S$ surjective $\iff$ 

$T/(\eme_S\cap T)\cong S/\eme_S$ $\iff$ $\varphi^{-1}(T)/(\varphi^{-1}(\eme_S)\cap \varphi^{-1}(T))  \cong \varphi^{-1}(S)/\varphi^{-1}(\eme_S)$ $\iff$ $\bar{T}/(\eme_R\cap \bar{T})\cong R/\eme_R$ $\iff$ $\bar{T}\to R\to R/\eme_R$ surjective

\item $3 \implies 2$: $T\to S\to S/\eme_S$ surjective $\iff$ $T/(\eme_S\cap T)\cong S/\eme_S$ $\implies$ $\eme_S\cap T$ is a maximal ideal $\implies$ $\eme_S\cap T=\eme_T$.

\end{itemize}

Assume that $\varphi$ is a minimal extension. To show $T$ local implies $\bar{T}$ local. In fact, suppose that $\varphi^{-1}(\eme_T)\subseteq \ene$ where $\ene$ is a proper ideal of $\bar{T}$. Then $\eme_T = \varphi(\varphi^{-1}(\eme_T))\subseteq \varphi(\ene)\subseteq T$ since $\varphi$ is surjective. But also being surjective implies that $\varphi(\ene)$ is an ideal of $T$, and the latter is a local ring. If $\varphi(\ene)$ is not proper, then there's $x\in \ene$ such that $\varphi(x)=1$, so $x-1\in Ker(\varphi)$. But $Ker(\varphi)$ is square zero since it's a minimal ideal and so $(x-1)^2=x^2-2x+1=0$, so $1=x(2-x)$ and so $x\in \ene\subseteq \bar{T}$ is invertible, and so $\ene=\bar{T}$, contradiction. Hence $\varphi(\ene)\subseteq \eme_T$, i.e. $\ene\subseteq \varphi^{-1}(\eme_T)$, and $\varphi^{-1}(\eme_T)$ is the maximal ideal of $\bar{T}$.

Assume now $\bar{T}$ local. To show that $4 \iff$ ($\varphi: \bar{T}\to T$ is a minimal extension). 

Assume that $\bar{T}\to T$ is minimal. That means $I = Ker(\varphi)$ is a minimal nonzero ideal of $\bar{T}$, and so $\eme_{\bar{T}}I=0$. Let $x\in \eme_{\bar{T}}$. Then since $xI=0$, $[x]I=0$, where $[x]$ is the image of $x$ in $R/\eme_R$. But $I$ is a nonzero vector space (in fact one dimensional) over $R/\eme_R$, so $[x]=[0]$ i.e. $x\in \eme_R$, and so $\eme_{\bar{T}}\subseteq \eme_R\cap \bar{T}$, and the other inclusion holds since $\eme_R\cap \bar{T}$ is an ideal of $\bar{T}$, hence $\eme_{\bar{T}}=\eme_R\cap \bar{T}$. Moreover, we have 
$\bar{T}/(\eme_R\cap \bar{T})\subseteq R/\eme_R$ is an extension of fields, and the vector space $I$ has dimension one over both (since $\varphi:R\to S$ and $\varphi:\bar{T}\to T$ are minimal), and so the fields are equal $\bar{T}/(\eme_R\cap \bar{T})\cong R/\eme_R$. Conversely, if we have $\bar{T}\to R\to R/\eme_R$ surjective then $\bar{T}/(\eme_R\cap \bar{T})\cong R/\eme_R$ and so $\eme_{\bar{T}}=\eme_R\cap \bar{T}$ and the residue fields are the same. Hence the ideal $I$ is one dimensional over $\bar{T}/\eme_{\bar{T}}$ and the extension $\varphi: \bar{T}\to T$ is minimal by Lemma \ref{lem-2}.

\end{proof}

The use of this proposition will be in restricting local homomorphism to subrings and applying the next two theorems.

\begin{thm}\label{thm-4} Let $\varphi: R\to S$ be a minimal extension of local rings. 

\begin{enumerate}

\item Assume $R$ and $S$ contain a coefficient field $\K$ (the inclusions $\K\subseteq S$, $\K\subseteq R$ induce an isomorphism $R/\eme_R \cong S/\eme_S \cong \K$). If $A\subseteq R$ is a $\K$-subalgebra mapping onto $S$, then $A$ is local and $\eme_R^2\subseteq A$.

\item Assume that characteristic of $R$ is $p^N$ for some $N\geq 1$, the residue field of $S$ and $R$ is $\F_q$ and that $\eme_R$ is nilpotent. If $A\subseteq R$ is a subring that maps onto $S$, then $A$ is local and $\eme_R^2+pR \subseteq A$.

\end{enumerate}

\end{thm}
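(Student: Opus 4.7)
The plan is to handle both parts in parallel, since the key mechanism is the same for each. By Lemma \ref{lem-2}, the minimal ideal $I := \ker\varphi$ satisfies $I\eme_R = 0$ (hence $I^2 = 0$), and because $\varphi(A) = S$, every $r \in R$ differs from some element of $A$ by an element of $I$. The slogan is: this difference is invisible after multiplying by anything in $\eme_R$. The ``coefficient'' assumptions in each part---that $\K \subseteq A$ in (1), and that $p = 1 + \cdots + 1 \in A$ in (2)---just supply the extra elements of $A$ needed at the right moments.

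I would first dispatch the containments, which is the cleanest part. For $\eme_R^2 \subseteq A$ (in both parts), pick $x, y \in \eme_R$ and lifts $a, b \in A$ with $\varphi(a) = \varphi(x)$ and $\varphi(b) = \varphi(y)$; by Proposition \ref{prop-1}, $a, b \in \eme_R$. Expanding $ab$ and collecting the three cross terms, each lies in $\eme_R \cdot I + I \cdot \eme_R + I^2 = 0$, so $xy = ab \in A$. For $pR \subseteq A$ in part (2) the same trick works with $p$ in the role of the $\eme_R$-factor: for $r \in R$ and any lift $a \in A$ of $\varphi(r)$, we have $p(r - a) \in \eme_R \cdot I = 0$ since $p \in \eme_R$, so $pr = pa \in A$.

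For localness the candidate maximal ideal is $\eme_A := A \cap \eme_R$. I would verify maximality by noting that $A / \eme_A$ injects into the field $R/\eme_R$; in part (1) the image equals $\K$ because $\K \subseteq A$, and in part (2) the image is a subring of the finite field $\F_q$, hence itself a field. The step I expect to need the most care is showing that every $a \in A \setminus \eme_A$ is a unit in $A$, not merely in $R$. My plan is to lift $\varphi(a)^{-1} \in S^{\times}$ to some $b' \in A$, set $\eta := ab' - 1 \in A \cap I$ (so $\eta^2 = 0$), and conclude from $(1 + \eta)(1 - \eta) = 1$ that $a^{-1} = b'(1 - \eta) \in A$. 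The unifying point throughout is that the relation $I\eme_R = 0$ lets us ``lift and then correct'' freely, provided the correction gets multiplied by something in $\eme_R$.
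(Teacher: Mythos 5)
Your proof is correct, but it takes a genuinely different route from the paper's. The paper proves part (1) by using the coefficient field to get $A=\K\oplus(\eme_R\cap A)$ (hence localness), then runs the dichotomy $I\subseteq A$ or $I\cap A=0$ for the one-dimensional kernel $I=\ker(\varphi)$, and in the latter case writes $\eme_R=I\oplus\eme_A$ so that $I\eme_R=0$ yields $\eme_R^2=\eme_A^2\subseteq A$; for part (2) it proves localness via the finite residue field plus nilpotency of $\eme_R$ (an equation $(a^l-1)^m=0$ produces an inverse inside $A$), and it obtains $\eme_R^2+pR\subseteq A$ only indirectly, by showing $A$ is a maximal subring of $R$ with the same residue field and then citing Lemma 22 of \cite{Franco}. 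You replace all of this by two uniform computations with the square-zero kernel: for $x,y\in\eme_R$ and lifts $a,b\in A\cap\eme_R$ one gets $xy=ab\in A$ since the correction terms lie in $\eme_R I+I\eme_R+I^2=0$ (you say ``expanding $ab$'' where you mean expanding $xy=(a+i)(b+j)$, but the computation is clear), similarly $pr=pa\in A$ because $p\in\eme_R$ (which holds since the residue field has characteristic $p$ --- worth one explicit line); and localness follows from the Neumann-type trick $\eta=ab'-1\in I\cap A$, $\eta^2=0$, $a^{-1}=b'(1-\eta)\in A$, which already shows the complement of $A\cap\eme_R$ consists of units of $A$. What your route buys: it is self-contained (no appeal to \cite{Franco}), handles both parts by the same mechanism, and uses neither the coefficient field, nor finiteness of $\F_q$, nor nilpotency of $\eme_R$, so it in fact proves the stronger statement that \emph{any} subring of $R$ mapping onto $S$ is local and contains $\eme_R^2$ (and $pR$ whenever $p\in\eme_R$). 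What the paper's route buys are the by-products it records for later use --- that when $\ker(\varphi)\not\subseteq A$ the map $A\to S$ is an isomorphism and $A$ is a maximal subring/subalgebra --- which feed into the proposition following Theorem \ref{thm-4} and into Theorem \ref{main-thm}; if you wanted your argument to replace the paper's wholesale, you would add the short dichotomy step to recover those statements.
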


\begin{proof}

\begin{enumerate}

\item To show that $A$ is local, notice that the composition $\K\to A\to A/(\eme_R\cap A)\cong S/\eme_S = \K$ is a bijection, so $A=\K\oplus (\eme_R\cap A)$, which says that $A$ is local with maximal ideal $\eme_R\cap A=\eme_A$. And $\K$ is also a coefficient field for $A$. Now, the ideal $I = Ker(\varphi)$ is one dimensional over $R/\eme_R = A/\eme_A$, and so $I\cap A$ is an ideal of $A$ that's at most one dimensional over $A/\eme_A$. Hence either $I\subseteq A$ or $I\cap A = 0$. In the first case,  since $R/I \cong S$ and $A$ maps onto $S$, we have $A=R$ which doesn't map isomorphically onto $S$ since $I$ is nonzero. So, $I\cap A=0$ and the map $\varphi:A\to S$ is an isomorphism. Now as vector spaces $R=\eme_R\oplus \K$, and the same for $A=\eme_A\oplus \K$, and since the dimension of $I$ is one, the codimension of $\eme_A\subseteq \eme_R$ is one, and one can write $\eme_R=I\oplus \eme_A$ as vector spaces. Since $I\eme_R = 0$, we get $\eme_R^2 = \eme_A^2\subseteq A$, as claimed. 

\item To show that $A$ is local, notice that $A/(\eme_R\cap A)\cong S/\eme_S = \F_q$ so $\eme_R\cap A$ is a maximal ideal. Now, let $a\in A\setminus \eme_R\cap A$, then $a$ maps to a nonzero element of $\F_q$, hence there's $l$ such that $a^l-1\in Ker(\varphi)\subseteq \eme_R$, but $\eme_R$ is nilpotent so $(a^l-1)^m = 0$ for some $m$ which after expanding the equation gives that $a$ is invertible and it's inverse is in $A$. So $\eme_R\cap A=\eme_A$ is the unique maximal ideal of $A$, since its complement consists of invertible elements. Moreover $A$ also has residue field $\F_q$. Now, the ideal $I = Ker(\varphi)$ is one dimensional over $R/\eme_R = A/\eme_A$, and so $I\cap A$ is an ideal of $A$ that's at most one dimensional over $A/\eme_A$. Hence either $I\subseteq A$ or $I\cap A = 0$. In the first case,  since $R/I \cong S$ and $A$ maps onto $S$, we have $A=R$ which doesn't map isomorphically onto $S$ since $I$ is nonzero. So, $I\cap A=0$ and the map $\varphi:A\to S$ is an isomorphism. In particular, $A$ is a maximal subring with the same residue field as $R$. Indeed, a subring $A_1$ containing $A$ maps onto $S$ and the same arguments above give either that if $A_1\neq R$, then $A_1$ maps isomorphically onto $S$ and so $A_1=A$. Now, using ~\cite[Lemma 22]{Franco}, we obtain that if $A$ is a maximal subring with the same residue field as $R$, $A$ contains $\eme_R^2+pR$. This finishes the proof.
\end{enumerate}

\end{proof}

Let's record a consequence in the proof of the above theorem:

\begin{prop} With the same conditions as above:

\begin{enumerate}

\item If $A\subseteq R$ is a $\K$-subalgebra mapping onto $S$ not containing $Ker(\varphi)$, then $A\to S$ is an isomorphism and $A$ is a local, maximal $\K$-subalgebra of $R$.

\item If $A\subseteq R$ is a subring that maps onto $S$ not containing $Ker(\varphi)$ then $A\to S$ is an isomorphism and $A$ is a local, maximal subring of $R$ with the same residue field as $R$.

\end{enumerate} 

\end{prop}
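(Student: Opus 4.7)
The plan is to observe that both claims are essentially embedded in the proof of Theorem~\ref{thm-4}; what remains is to extract the relevant pieces and to add a short maximality step. Throughout, write $I=\operatorname{Ker}(\varphi)$, which by Lemma~\ref{lem-2} is one-dimensional over $R/\eme_R$.

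First I would re-establish that $A$ is local with $\eme_A=\eme_R\cap A$ and with residue field equal to that of $R$. In case~(1) this comes from the coefficient-field splitting $A=\K\oplus(\eme_R\cap A)$ produced by the bijection $\K\hookrightarrow A\to A/(\eme_R\cap A)\cong\K$. In case~(2) it comes from the nilpotence-plus-finite-field argument that, for $a\in A\setminus(\eme_R\cap A)$, the element $a^{q-1}-1$ lies in $\eme_R$ and is therefore nilpotent, making $a$ a unit in $A$. The isomorphism statement is then immediate: $I\cap A$ is an $A/\eme_A$-subspace of the one-dimensional space $I$, hence equals either $0$ or $I$; the hypothesis $I\not\subseteq A$ rules out $I$, whence $\varphi|_A\colon A\to S$ is injective and, being surjective, an isomorphism.

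For the maximality claim I would take any intermediate object $A\subseteq A_1\subseteq R$ of the appropriate type — a $\K$-subalgebra in~(1), a subring with the same residue field as $R$ in~(2) — and note that $A_1$ still satisfies the hypotheses of Theorem~\ref{thm-4}, since $A_1\supseteq A$ still surjects onto $S$. Applying the same dichotomy to $A_1$: if $I\subseteq A_1$, then the surjection $A_1\twoheadrightarrow R/I$ together with $I\subseteq A_1$ forces $A_1=R$; if $I\cap A_1=0$, then $\varphi|_{A_1}$ is an isomorphism $A_1\cong S$, and since $\varphi|_A$ is also an isomorphism, the inclusion $A\subseteq A_1$ between two subrings both mapped isomorphically onto $S$ by $\varphi$ collapses to $A=A_1$.

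The one step that requires a moment of care — and is the closest thing to an obstacle — is the residue-field check inside the nilpotence argument in case~(2): to conclude that $a^{q-1}-1\in\eme_R$ for $a\in A_1\setminus(\eme_R\cap A_1)$ one needs $A_1$ to have residue field $\F_q$. This is exactly why the maximality in~(2) is formulated among subrings \emph{with the same residue field as $R$}; in~(1) the analogous issue does not arise because any $\K$-subalgebra $A_1$ of $R$ automatically has $\K$ as coefficient field. With that qualifier in place, everything reduces to the two observations above.
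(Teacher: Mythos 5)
Your proposal is correct and follows essentially the same route as the paper, which records this proposition without a separate proof as a consequence of the argument in Theorem \ref{thm-4}: the dichotomy $I\cap A\in\{0,I\}$ (forced by $I$ being one-dimensional over the residue field) gives the isomorphism, and maximality follows by running the same dichotomy on any intermediate $A_1\supseteq A$, exactly as you do. One minor remark on your final caveat: in case (2) the containment $a^{q-1}-1\in\eme_R$ uses only that $R$ has residue field $\F_q$ (not anything about $A_1$), and any $A_1\supseteq A$ automatically satisfies $A_1/(\eme_R\cap A_1)\cong\F_q$ because it still surjects onto $S$, so the restriction to subrings with the same residue field as $R$ is part of the statement's phrasing rather than a genuine obstacle.
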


\begin{obs} Throughout this paper we haven't made use of structural results such as Cohen's structure theorems (see \cite[tag/0323]{SP}) to simplify the hypothesis of the theorems. It's worthwhile to notice that for example, complete local rings of equal characteristic possess coefficient fields, hence the hypothesis in our theorems hold for a wide class of local rings. 
\end{obs}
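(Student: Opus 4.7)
The observation makes no new mathematical claim requiring a proof from scratch; its substantive part is the classical Cohen structure theorem (existence of coefficient fields in complete local rings of equal characteristic), cited at \cite[tag/0323]{SP}. The plan is to explain how the existence of coefficient fields, together with Proposition~\ref{prop-1}(3), supplies the hypothesis of Theorem~\ref{thm-4}(1) for this class of rings, and to sketch the Cohen argument so the reader sees why the cited fact is available.

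First I would note that if $(R, \eme_R)$ admits a coefficient field $\K \subseteq R$, and $\varphi: R \to S$ is any surjection of local rings, then Proposition~\ref{prop-1}(3) identifies $R/\eme_R$ with $S/\eme_S$ via $\varphi$, so $\varphi(\K) \subseteq S$ is a subfield mapping isomorphically to $S/\eme_S$; that is, $\varphi(\K)$ is automatically a coefficient field of $S$. Thus the structural hypothesis of Theorem~\ref{thm-4}(1) that both $R$ and $S$ share a common coefficient field $\K$ reduces to $R$ alone admitting one, and the observation follows once this is secured for equal-characteristic complete local rings.

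For the sketch of Cohen's theorem, let $R$ be complete local of equal characteristic. The plan is to exhibit a subfield $K \subseteq R$ surjecting onto $R/\eme_R$ by a Zorn-plus-lifting argument. The union of a chain of subfields of $R$ is again a subfield, so maximal subfields exist; the key point is that any element $\bar\alpha \in R/\eme_R$ outside the image of such a maximal $K$ can be lifted to $R$ to strictly enlarge $K$, contradicting maximality. For $\bar\alpha$ transcendental over the image of $K$, any lift $\alpha \in R$ satisfies $K[\alpha] \cap \eme_R = 0$, so $K(\alpha) \subseteq R$ is a strictly larger subfield. For $\bar\alpha$ separable algebraic, the minimal polynomial over the image of $K$ has a lift to $K[t] \subseteq R[t]$ with simple root $\bar\alpha$ modulo $\eme_R$, so Hensel's lemma (available because $R$ is $\eme_R$-adically complete) produces an actual root $\alpha \in R$, and $K(\alpha)$ again strictly contains $K$.

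The hard part will be the purely inseparable case in residue characteristic $p$: lifting $\bar\alpha$ when $\bar\alpha^{p^n}$ lies in the image of $K$ but $\bar\alpha$ does not. Here one exploits $R = \varprojlim R/\eme_R^k$, inductively choosing a $p^n$-th root of the chosen lift of $\bar\alpha^{p^n}$ compatibly across the tower of quotients and patching to an element of $R$, then verifying that the resulting element is algebraic over $K$ of the expected degree and that the extension avoids $\eme_R$. This inverse-limit construction is the technical core of Cohen's theorem and is precisely where both equal characteristic (so that $p$-th power maps behave well on the relevant residue fields) and completeness (so that coherent sequences converge) are indispensable. Once this step is in hand, maximality of $K$ is contradicted in every case, surjectivity $K \to R/\eme_R$ follows, and the observation's claim that the hypotheses of Theorem~\ref{thm-4}(1) hold for a wide class of local rings is justified.
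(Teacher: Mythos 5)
This statement is an unproved remark: the paper supplies no argument, simply citing \cite[tag/0323]{SP} for Cohen's structure theorem and noting the consequence that the coefficient-field hypothesis of Theorem~\ref{thm-4} and Theorem~\ref{main-thm} is available for complete local rings of equal characteristic. The genuinely useful part of your proposal is the first paragraph, which is correct and fills in the one small point the remark leaves implicit: if $\K\subseteq R$ is a coefficient field and $\varphi:R\to S$ is a surjection of local rings, then $\varphi|_{\K}$ is injective (a unital homomorphism out of a field) and Proposition~\ref{prop-1}(3) shows that $\varphi(\K)$ maps isomorphically onto $S/\eme_S$, so $S$ inherits a coefficient field compatible with the one on $R$. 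That is all the observation needs beyond the citation.

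The remainder of your proposal attempts to reprove Cohen's theorem, which is unnecessary here and, in the one hard case, does not work as sketched. In the purely inseparable step you propose to ``choose a $p^n$-th root of the chosen lift of $\bar\alpha^{p^n}$,'' but an arbitrary lift $c\in R$ of $\bar\alpha^{p^n}$ need not admit any $p^n$-th root in $R$: already in $R=\F_p(t)[[x]]$ the element $t^p+x$ lifts $t^p\in R/\eme_R=\F_p(t)$ yet is not a $p$-th power, since $p$-th powers in this ring lie in $\F_p(t^p)[[x^p]]$. No amount of working compatibly through the tower $R/\eme_R^k$ repairs this, because the obstruction is already present at each finite level. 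This failure is precisely why the characteristic-$p$ case of Cohen's theorem is hard; the standard proofs avoid the one-element-at-a-time lifting of a maximal subfield in the inseparable case, using $p$-bases or a global construction instead. Since the paper only invokes the theorem, the correct move is to cite it, as the paper does, and supply only the propagation argument from your first paragraph.
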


Here's the main result that completes the analysis:

\begin{thm}\label{main-thm} Let $\varphi: R\to S$ be a minimal extension of local rings. Then one can describe the subrings $A$ mapping isomorphically to $S$ in the same cases as above:

\begin{enumerate}

\item Assume $R$ and $S$ contain a coefficient field $\K$. If $Ker(\varphi)\subseteq \eme_R^2$, then there are no $\K$-subalgebras of $R$ mapping isomorphically onto $S$ under $\varphi$. Otherwise, the set of such $\K$-subalgebras is naturally an affine space over $\K$ of dimension $\dim_{\K}(\eme_{S}/\eme_{S}^2)$. 

\item Assume that the characteristic of $S$ is $p^N$ where $N\geq 1$ and that the residue field of $S$ and $R$ is $\F_q$ and that $\eme_R$ is nilpotent. If $Ker(\varphi)\subseteq \eme_R^2+pR$, then there are no subrings of $R$ that map isomorphically onto $S$. Otherwise, the set of such subrings is naturally an affine space over $\F_q$ of dimension $\dim_{\F_q}(\eme_{S}/(pS+\eme_{S}^2))$.

\end{enumerate}

\end{thm}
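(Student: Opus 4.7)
The plan is to reduce both cases to parametrizing $\K$-subspace complements of a one-dimensional subspace inside a finite-dimensional $\K$-vector space. Case 1 is handled directly, and case 2 reduces to case 1 after quotienting $R$ by an appropriate ideal.

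For case 1, suppose first that $I := Ker(\varphi) \subseteq \eme_R^2$. By Theorem \ref{thm-4}, any $\K$-subalgebra $A$ mapping onto $S$ contains $\eme_R^2 \supseteq I$, so $A \cap I = I \neq 0$ and $A \to S$ fails to be injective. Otherwise, the minimality of $I$ yields $I \cap \eme_R^2 = 0$, and by the proof of Theorem \ref{thm-4} the assignment $A \mapsto \eme_A$ identifies the $\K$-subalgebras of interest with $\K$-subspaces $V \subseteq \eme_R$ satisfying $\eme_R = V \oplus I$ and $V \supseteq \eme_R^2$. (The direct-sum condition makes $A = \K \oplus V$ map isomorphically onto $S$; the containment $\eme_R^2 \subseteq V$ is necessary by Theorem \ref{thm-4} and sufficient to make $A$ multiplicatively closed, since $V \cdot V \subseteq \eme_R^2 \subseteq V$.) Quotienting by $\eme_R^2$ converts this into the pure linear-algebra problem of choosing a $\K$-complement of the one-dimensional subspace $\bar I := (I + \eme_R^2)/\eme_R^2$ in $\eme_R/\eme_R^2$. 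Such complements form a torsor under $\mathrm{Hom}_\K(\eme_R/(\eme_R^2 + I), \bar I)$; via the canonical identifications $\eme_R/(\eme_R^2 + I) \cong \eme_S/\eme_S^2$ (induced by $\varphi$) and $\bar I \cong I$ (one-dimensional over $\K$), this becomes an affine space over $\K$ of dimension $\dim_\K(\eme_S/\eme_S^2)$.

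For case 2, set $J := pR + \eme_R^2$; by Theorem \ref{thm-4} any subring $A$ of the required type contains $J$, so the correspondence $A \mapsto \bar A := A/J$ identifies such subrings with subrings of $\bar R := R/J$ mapping isomorphically onto $\bar S := S/(pS + \eme_S^2)$ under the induced minimal extension $\bar \varphi: \bar R \to \bar S$ with kernel $\bar I := (I + J)/J$. The ring $\bar R$ has characteristic $p$, residue field $\F_q$, and $\eme_{\bar R}^2 = 0$; since $x^q - x$ has $q$ distinct roots in $\F_q$ and $\eme_{\bar R}^2 = 0$, these lift to exactly $q$ roots in $\bar R$, forming a subring isomorphic to $\F_q$. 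This subring is the unique coefficient field of $\bar R$, because any coefficient field must consist of elements $y$ with $y^q = y$. Consequently, any subring of $\bar R$ whose residue field is $\F_q$ (in particular, any subring mapping isomorphically onto $\bar S$, by Theorem \ref{thm-4}) contains the $q$ solutions of $y^q = y$ required for its own coefficient field, and these coincide with the unique $\F_q \subseteq \bar R$; such subrings are therefore exactly the $\F_q$-subalgebras of $\bar R$. Applying case 1 to $\bar \varphi$ now yields either the empty set (when $\bar I = 0$, equivalently $I \subseteq J$) or an affine space over $\F_q$ of dimension $\dim_{\F_q}(\eme_{\bar S}/\eme_{\bar S}^2) = \dim_{\F_q} \eme_{\bar S} = \dim_{\F_q}(\eme_S/(pS + \eme_S^2))$, since $\eme_{\bar S}^2 = 0$.

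The main technical hurdle is establishing the uniqueness of the coefficient field of $\bar R$ in case 2, which is what forces every subring with the correct residue field to be an $\F_q$-subalgebra and thereby permits the clean reduction to case 1. Once that is in hand, the affine space structure is pure linear algebra on the cotangent spaces, and the dimension count follows from tracking $\bar I$ through the quotients identifying $\eme_R/(\eme_R^2 + I)$ with $\eme_S/\eme_S^2$.
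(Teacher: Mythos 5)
Your argument is correct, and part 1 is essentially the paper's proof: you set up the same correspondence between the subalgebras $A=\K\oplus V$ and the complements of the line spanned by the image of $Ker(\varphi)$ in $\eme_R/\eme_R^2$, only packaging the affine structure as a torsor under $\mathrm{Hom}_{\K}(\eme_R/(\eme_R^2+Ker(\varphi)),\,\overline{Ker(\varphi)})$ rather than via the paper's projection onto $\eme_A/\eme_A^2$; the dimension count agrees. Part 2 is where you genuinely diverge. The paper invokes \cite[Theorem 28]{Franco}: the subrings in question are maximal subrings with the same residue field as $R$, and those are classified by codimension-one subspaces of $\eme_R/(\eme_R^2+pR)$, after which the argument mirrors part 1. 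You instead pass to $\bar R=R/(pR+\eme_R^2)$ and $\bar S=S/(pS+\eme_S^2)$ (legitimate: Theorem \ref{thm-4} forces every relevant $A$ to contain $pR+\eme_R^2$, and in the nondegenerate case $Ker(\varphi)\cap(pR+\eme_R^2)=0$ makes $A\mapsto A/(pR+\eme_R^2)$ a bijection onto the subrings of $\bar R$ mapping isomorphically onto $\bar S$, with $\bar\varphi$ still minimal by Lemma \ref{lem-2}), observe that $\bar R$ has characteristic $p$ and square-zero maximal ideal, so the roots of $x^q-x$ form the unique coefficient field $\F_q\subseteq \bar R$, show every subring with residue field $\F_q$ must contain it, and then apply part 1 to $\bar\varphi$. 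This buys a self-contained proof of part 2 (modulo Theorem \ref{thm-4}) that avoids \cite[Theorem 28]{Franco} and exhibits part 2 as a formal consequence of part 1 --- a nice structural point --- at the cost of the Hensel-type lifting argument and the routine verification that the reduction is a bijection. One phrasing slip: when $Ker(\varphi)\subseteq pR+\eme_R^2$ the induced map $\bar\varphi$ has zero kernel, so it is not a minimal extension and part 1 does not literally apply to it; but the desired emptiness follows at once from $A\supseteq pR+\eme_R^2\supseteq Ker(\varphi)$, which forces $A=R$, exactly as you argued in part 1, so nothing is lost.
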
 

\begin{proof} 

\begin{enumerate}

\item By Theorem \ref{thm-4}, any $A$ mapping onto $S$ satisfies $\eme_R^2\subseteq A$. 

If $Ker(\varphi)\subseteq \eme_R^2$ then $A$ contains $Ker(\varphi)$ so $A=R$, which is not the case.

Assume now $Ker(\varphi)$ not contained in $\eme_R^2$, and fix a nonzero $z\in Ker(\varphi)$. Notice that $z$ is a generator as vector space over $R/\eme_R$. Define $V(R)=\eme_R/\eme_{R}^2$. To $A$, subalgebra mapping isomorphically onto $S$, let's assign the subspace $V(A) = \eme_A/\eme_R^2$. It's a $\K$-codimension one subspace in $V(R)$ that doesn't contain $\bar{z}$ (the image of $z$). Conversely, for $V\subseteq V(R)$ a $\K$-codimension one subspace, assign the subspace $A(V)=\K+ \tilde{V}+\eme^2$, where $\tilde{V}$ is a lift of $V$ to $\eme$. Notice that $A$ is closed under multiplication since $\tilde{V}$ contains $\eme^2$ and $\K \tilde{V}\subseteq \tilde{V}+\eme^2$. This provides with a 1-to-1 correspondence between the codimension one subspaces of $V(R)$ not containing $z$ and the $\K$-subalgebras $A$ mapping isomorphically onto $S$. Now, since $\eme_{R}=\eme_{A}\oplus \K z$, projection in the first component gives a bijection between $\{ V\subseteq \eme_{R}/\eme_{R}^2 \text{ codimension 1}\, |\ \bar{z}\notin V \}$ with $\eme_{A}/\eme_A^2\cong \eme_B/\eme_B^2$ which is the affine space sought after.

\item  By Theorem \ref{thm-4}, any $A$ mapping onto $S$ satisfies $\eme_R^2+pR\subseteq A$. 

If $Ker(\varphi)\subseteq \eme_R^2+pR$ then $A$ contains $Ker(\varphi)$ so $A=R$, which is not the case.

By the proposition above, $A$ is a maximal subring with the residue field as $R$ and $A$ doesn't contain $Ker(\varphi)$. Now, by \cite[Theorem 28]{Franco}, the maximal subrings $A$ with the same residue field as $R$ are in one to correspondence with the codimension one subspaces of $\eme_R/(\eme_R^2+pR)$, the bijection being $A\mapsto V(A)$ with the notation above. Fix $z\in Ker(\varphi)$ nonzero. Then $\bar{z}\in \eme_R/(\eme_R^2+pR)$ is nonzero, and notice that $\eme_A/(\eme_R^2+pR)$ is a codimension one subspace, that doesn't contain $\bar{z}$, and furthermore $\eme_A/(\eme_R^2+pR) \cong \eme_S/(\eme_S^2+pS)$. In this manner, as above, we get a bijection between the set of those $A$ mapping isomorphically onto $S$ and $\eme_S/(\eme_S^2+pS)$. 

\end{enumerate}

\end{proof}

We need conditions in which to apply the Proposition 3 in the study of subrings.

\begin{prop}\label{prop-7} Under the following conditions, given a minimal ring extension of local rings $\varphi: R\to S$ and a local subring, $T\subseteq S$, Theorem 6 applies to the restriction $\varphi^{-1}(T)\to T$:

\begin{enumerate}

\item All $R, S, T$ have the same coefficient field $\K$.

\item The characteristic of $R$ is $p^N$ for some prime $p$ and $N\geq 1$, $\eme_R$ is nilpotent and all $R, S, T$ share the same residue field, a finite field $\F_q$.

\end{enumerate}

\end{prop}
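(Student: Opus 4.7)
The plan is to verify, in each of the two cases, that the hypotheses of Theorem \ref{main-thm} apply to the restriction $\varphi: \bar{T}\to T$ with $\bar{T}=\varphi^{-1}(T)$. This splits into two steps: (a) establish that $\varphi: \bar{T}\to T$ is itself a minimal extension of local rings; (b) transfer the extra data (the coefficient field in case 1, or the characteristic/residue field/nilpotency in case 2) from $R,S$ to $\bar{T},T$.

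For step (a), I would invoke Proposition \ref{prop-3}. In both cases the composition $T\hookrightarrow S\twoheadrightarrow S/\eme_S$ is surjective: in case 1 because $\K\subseteq T$ already surjects onto $S/\eme_S\cong \K$, and in case 2 because $T$ and $S$ share the residue field $\F_q$. Thus condition 3 of Proposition \ref{prop-3} holds, hence condition 4, and the final assertion of Proposition \ref{prop-3} yields simultaneously that $\bar{T}$ is local and that $\varphi:\bar{T}\to T$ is a minimal extension.

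For step (b), I would trace the structural data along $\varphi$. In case 1, taking the inclusions to be compatible with $\varphi$ (so $\varphi(\K)=\K$ inside $S$), the hypothesis $\K\subseteq T$ gives $\K\subseteq \varphi^{-1}(T)=\bar{T}$; Proposition \ref{prop-1} applied to the minimal extension $\bar{T}\to T$ shows that the composition $\K\to \bar{T}/\eme_{\bar{T}}$ is an isomorphism, so $\K$ remains a coefficient field for $\bar{T}$, and by hypothesis also for $T$. In case 2, the characteristic of $T$ divides the characteristic $p^N$ of $R$ and is nonzero, so it equals $p^{N'}$ for some $1\le N'\le N$; the residue field of $\bar{T}$ is $\F_q$ by Proposition \ref{prop-1} for $\bar{T}\to T$; and $\eme_{\bar{T}}=\eme_R\cap \bar{T}$ (via the $1\iff 2$ equivalence in Proposition \ref{prop-3}) is contained in the nilpotent ideal $\eme_R$, hence is itself nilpotent.

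The argument is essentially bookkeeping with Propositions \ref{prop-1} and \ref{prop-3}, and I do not expect a genuine obstacle. The most delicate point is the consistency of coefficient fields in case 1: one must interpret the hypothesis ``$R,S,T$ have the same coefficient field $\K$'' as saying that the embeddings $\K\subseteq R$ and $\K\subseteq S$ are identified via $\varphi$, i.e.\ $\varphi$ is $\K$-linear; without this compatibility the pullback $\K\subseteq T\Longrightarrow \K\subseteq \bar{T}$ would be unjustified.
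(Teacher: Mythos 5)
Your proposal is correct and follows essentially the same route as the paper: use Proposition \ref{prop-3} (surjectivity of $T\to S/\eme_S$, automatic in both cases) to get that $\varphi^{-1}(T)\to T$ is a minimal extension of local rings, then transfer the coefficient field resp.\ the residue field, characteristic, and nilpotency to $\varphi^{-1}(T)$ and $T$ so Theorem \ref{main-thm} applies. Your write-up is in fact slightly more careful than the paper's, which leaves the nilpotency and characteristic checks in case 2 implicit.
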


\begin{proof} By Proposition \ref{prop-3} the restriction $\varphi^{-1}(T)\to T$ to a local subring $T$ of a minimal extension is a minimal extension provided we have the composite map $T\to S\to S/\eme_S$ surjective. But this is the condition we are assuming in either case. So we only need to check the conditions of Theorem \ref{main-thm}.
\begin{enumerate}

\item Since $\K\subseteq T$, $\K\subseteq \varphi^{-1}(T)$, all the algebras involved have the same coefficient field $\K$, which are the conditions of Theorem \ref{main-thm} part 1.

\item Since $T$ has residue field $\F_q$, so does $\varphi^{-1}(T)$. The other conditions of Theorem \ref{main-thm} part 2 are satisfied.

\end{enumerate}

\end{proof}

As a result, we can compute dimensions and relate them with the existence of subalgebras. 

\begin{prop}\label{prop-8} Assume that we're in the coefficient field case of $\varphi:R\to S$ minimal extension and that $d(R) := \dim_{\K}(\eme_R/\eme_R^2)$ is finite. The following are equivalent:
\begin{enumerate}
\item $R$ possesses a subalgebra $A$ mapping isomorphically onto $B$ 
\item $Ker(\varphi)$ is not contained in $\eme_R^2$
\item $d(R)=d(S)+1$

\end{enumerate}
\end{prop}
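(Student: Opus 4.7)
The plan is to split the equivalence into $(1)\Leftrightarrow(2)$, which is essentially immediate from the main theorem already proved, and $(2)\Leftrightarrow(3)$, which is a short linear algebra computation using the structure of $\ker(\varphi)$.

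For $(1)\Leftrightarrow(2)$ I would just invoke Theorem \ref{main-thm} part 1 directly: if $Ker(\varphi)\subseteq \eme_R^2$ there are no such subalgebras, and otherwise the set of such subalgebras is an affine space over $\K$ of dimension $\dim_\K(\eme_S/\eme_S^2)\ge 0$, hence nonempty. So the only content is $(2)\Leftrightarrow(3)$.

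For $(2)\Leftrightarrow(3)$, I would analyze the surjection $\bar\varphi:\eme_R/\eme_R^2 \to \eme_S/\eme_S^2$ induced by $\varphi$ (well defined and surjective since $\varphi(\eme_R)=\eme_S$ by Proposition \ref{prop-1} and consequently $\varphi(\eme_R^2)=\eme_S^2$). The kernel of $\bar\varphi$ equals $(\eme_R^2 + I)/\eme_R^2$, where $I=Ker(\varphi)$. Indeed, $\varphi^{-1}(\eme_S^2)=\eme_R^2+I$ because the preimage of $\eme_S^2$ under a surjection with kernel $I$ is $\varphi^{-1}(\varphi(\eme_R^2))=\eme_R^2+I$. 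Since $I$ is a minimal ideal, Lemma \ref{lem-2} gives that $I$ is a one-dimensional $\K$-vector space; hence $(\eme_R^2+I)/\eme_R^2 \cong I/(I\cap \eme_R^2)$ has dimension $0$ or $1$ over $\K$, depending on whether $I\subseteq \eme_R^2$ or not.

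Therefore, applying rank-nullity to the surjection $\bar\varphi$ (using that $d(R)$ is finite, so $d(S)$ is too), we get $d(R)=d(S)+\dim_\K \ker(\bar\varphi)$, which equals $d(S)$ exactly when $I\subseteq \eme_R^2$ and equals $d(S)+1$ exactly when $I\not\subseteq \eme_R^2$. This establishes $(2)\Leftrightarrow(3)$ and finishes the proof. The only mildly delicate step is identifying $\ker(\bar\varphi)$ correctly; everything else is a direct quotation of earlier results.
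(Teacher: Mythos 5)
Your proposal is correct and follows essentially the same route as the paper: $(1)\Leftrightarrow(2)$ is quoted from Theorem \ref{main-thm}, and $(2)\Leftrightarrow(3)$ comes from comparing $\eme_R/\eme_R^2$ with $\eme_S/\eme_S^2$, where the dimension drop is $\dim_{\K}\bigl((\eme_R^2+Ker(\varphi))/\eme_R^2\bigr)\in\{0,1\}$ by one-dimensionality of $Ker(\varphi)$ (Lemma \ref{lem-2}). Your phrasing via the kernel of the induced surjection and rank--nullity is just a repackaging of the paper's isomorphism $\eme_R/(\eme_R^2+Ker(\varphi))\cong\eme_S/\eme_S^2$, so there is nothing further to add.
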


\begin{proof}
\item $1\iff 2$ Theorem \ref{main-thm} part 1.
\item $2 \iff 3$ The map $\eme_R\to \eme_S$ gives an isomorphism $\eme_R/(\eme_R^2+Ker(\varphi))\cong  \eme_S/\eme_{S}^2$ and  $\displaystyle \eme_{R}/(\eme_{R}^2+Ker(\varphi))\cong \frac{\eme_{R}/\eme_{R}^2}{(\eme_{R}^2+Ker(\varphi))/\eme_{R}^2}$. From here, since $\dim_{\K}(Ker(\varphi))=1$, it's clear $Ker(\varphi)\not\subset \eme_R^2$ $\iff$ $\dim_{\K}((\eme_{R}^2+Ker(\varphi))/\eme_{R}^2)=1$ $\iff$ $d(R)=d(S)+1$.

\end{proof}


\section{Valuations}

\begin{definition} A (commutative) partial-monoid is a set $(M,+)$ endowed with a partial binary (commutative) operation that has a neutral element $0$ (i.e. $a+0$ is always defined and $a+0=a=0+a$), and it's associative when defined (i.e. $a+b$ and $(a+b)+c$ are defined iff $b+c$ and $a+(b+c)$ are defined, and if so, $(a+b)+c=a+(b+c)$). A sub-partial-monoid $N\subseteq M$ is a subset such that if $a, b\in N$ and $a+b$ is defined, then $a+b\in N$. From here on all partial-monoids are commutative.
\end{definition}

\begin{definition} An ordered partial-monoid is triple $(M,+,\leq)$ where $(M, +)$ is a partial-monoid and $\leq$ is a partial order that's compatible with the sum, i.e. given $a\leq c$ and $b\leq d$, if $c+d$ is defined then $a+b$ is defined and $a+b\leq c+d$. 
\end{definition}

For us, examples of interest are $[\mathbf{m}]=\{0, ..., m\}$ (finite interval of non-negative numbers) with (partial) addition, and products $\mathcal{M}_{n,N}=[\mathbf{n-1}]\times [\mathbf{N-1}]$. Both are ordered partial-monoids, the first with natural order, the second with lexicographic order: $(a,b)\leq (c,d)$ iff $a\leq c$ or $a=c$ and $b\leq d$. These are total orders.

Relevant to our study we need valuation-like functions, defined on commutative rings with values in partial-monoids.

\begin{definition} Let $(M,\ast), (N, \star)$ be partial-monoids. A partial-homomorphism is a (total) function $\phi: M\to N$ (i.e. everywhere defined) with the properties:
\begin{enumerate}
\item $\phi(0_M)=0_N$ 
\item For all $x,y\in M$, if both $x\ast y$ and $\phi(x)\star\phi(y)$ are defined, then $\phi(x\ast y) = \phi(x)\star\phi(y)$.

\noindent $\phi$ is called semi-strict if in addition:

\item For all $x,y\in M$, if  $\phi(x)\star\phi(y)$ is defined, then $x\ast y$ is defined and $\phi(x\ast y) = \phi(x)\star\phi(y)$.
\end{enumerate}
\end{definition}

A typical example of a semi-strict partial-homomorphism is the inclusion $N\subseteq M$ of a sub-partial-monoid.

\begin{definition} Let $R$ a commutative ring. A partial-valuation is partial-homomorphism $\nu: R\setminus \{0\}\to M$ that is surjective, whose target is an ordered partial-monoid $(M,+)$ (where $(R\setminus \{0\}, \cdot)$ is the multiplicative partial-monoid of $R$), that satisfies:
\vspace{1 mm}

\noindent \text{\emph{Non-Archimedean condition:}} If $x+y$ is defined (i.e. not zero), then $\nu(x+y)\geq \min\{ \nu(x),\nu(y)\}$ in the following sense: for any $\mu\in M$ such that $\mu\leq \nu(x)$ and $\mu\leq \nu(y)$, we have $\nu(x+y)\geq \mu$. 
\vspace{1 mm}

\noindent $\nu$ is called semi-strict if the partial-homomorphism $\nu:R\setminus \{0\}\to M$ is semi-strict. 
\vspace{1 mm}

\noindent $\nu$ is called strict if it's semi-strict, $M$ is totally ordered and the equality $\nu(x+y)= \min\{ \nu(x),\nu(y)\}$ holds when $\nu(x)\neq \nu(y)$.
\end{definition} 

\begin{obs} 

\begin{enumerate}

\item The Non-Archimedean condition is most easily stated when $M$ possesses infima over any finite subset. Then  $\min\{ \nu(x),\nu(y)\}$ is actually an element of $M$ and the condition simply reads that $\nu(x+y)\geq \min\{ \nu(x),\nu(y)\}$

\item When $M$ is totally ordered and $\nu$ is strict, then the Non-Archimedean condition is the familiar one from valuations on fields. 

\end{enumerate}

\end{obs}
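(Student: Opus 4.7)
The plan is to verify both items of the observation by unpacking the definition of the Non-Archimedean condition in the two specialized settings. For item 1, I would first fix $x, y \in R \setminus \{0\}$ with $x+y \neq 0$ and show the equivalence between the general formulation (``for all $\mu \in M$ with $\mu \leq \nu(x)$ and $\mu \leq \nu(y)$, we have $\nu(x+y) \geq \mu$'') and the compact formulation ($\nu(x+y) \geq \min\{\nu(x), \nu(y)\}$) under the hypothesis that finite infima in $M$ exist. One direction is immediate: if the general formulation holds and $\mu_0 := \min\{\nu(x), \nu(y)\} \in M$ is the infimum, then $\mu_0$ itself satisfies $\mu_0 \leq \nu(x)$ and $\mu_0 \leq \nu(y)$, so plugging in $\mu = \mu_0$ gives $\nu(x+y) \geq \mu_0$. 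The converse is a one-line transitivity argument: for any $\mu \leq \nu(x), \nu(y)$, the universal property of the infimum forces $\mu \leq \mu_0 \leq \nu(x+y)$.

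For item 2, I would first remark that a totally ordered set automatically has infima for every finite subset, namely the minimum of the subset, so item 1 applies and the condition reduces to $\nu(x+y) \geq \min\{\nu(x), \nu(y)\}$. This is already the usual non-Archimedean inequality. The additional assumption of strictness then supplies the equality $\nu(x+y) = \min\{\nu(x), \nu(y)\}$ whenever $\nu(x) \neq \nu(y)$, which together with the inequality is exactly the familiar axiom satisfied by non-Archimedean valuations on fields (e.g. $p$-adic valuations on $\Q$). Nothing further needs to be shown beyond quoting the definition of strictness.

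There is essentially no technical obstacle here; the entire observation is a reformulation of the definition under additional hypotheses on $M$, and the only subtlety is verifying that the universally quantified ``for any $\mu \leq \nu(x), \nu(y)$'' formulation really does collapse to the single inequality when the infimum is attained inside $M$. If anything is worth being careful about, it is that in the general definition the symbol $\min\{\nu(x),\nu(y)\}$ need not denote an element of $M$ at all, so the observation is primarily a notational convenience confirming that once infima exist, the two presentations coincide.
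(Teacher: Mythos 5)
Your proposal is correct and matches the paper's (implicit) treatment: the observation is stated without proof precisely because it reduces to unpacking the definition, and your verification — plugging the infimum $\mu_0$ into the universally quantified condition and using transitivity for the converse, then noting that total order gives finite infima and strictness supplies the equality case — is exactly the intended argument.
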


\begin{prop}\label{prop-9} Let $\nu: R\setminus \{0\}\to M$ be a semi-strict partial-valuation. Then for a subring $S\subseteq R$, the image $\nu(S)$ is a sub-partial-monoid. In fact, for any sub-partial-monoid of the multiplicative partial-monoid $T\subseteq R\setminus \{0\}$, $\nu(T)$ is a sub-partial-monoid.
\end{prop}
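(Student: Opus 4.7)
The plan is to reduce the subring statement to the more general sub-partial-monoid statement, and then extract the latter directly from the definition of semi-strict partial-valuation. Concretely, given a subring $S \subseteq R$, I would first check that $T := S \setminus \{0\}$ is itself a sub-partial-monoid of the multiplicative partial-monoid $(R \setminus \{0\}, \cdot)$: whenever $x, y \in T$ and $xy$ is defined in $R \setminus \{0\}$ (i.e.\ $xy \neq 0$), closure of $S$ under multiplication gives $xy \in S$, and $xy \neq 0$ gives $xy \in T$. So the subring assertion follows at once from the general assertion applied to this particular $T$.

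For the general claim, let $T \subseteq R \setminus \{0\}$ be any sub-partial-monoid and take $a, b \in \nu(T)$ with $a + b$ defined in $M$. I would lift to elements $x, y \in T$ with $\nu(x) = a$ and $\nu(y) = b$. Since $\nu(x) + \nu(y)$ is defined and $\nu$ is \emph{semi-strict}, property (3) of a semi-strict partial-homomorphism forces $x \cdot y$ to be defined in the multiplicative partial-monoid (i.e.\ $xy \neq 0$) and yields $\nu(xy) = \nu(x) + \nu(y) = a + b$. Because $T$ is a sub-partial-monoid of $(R\setminus\{0\},\cdot)$, the product $xy$ lies in $T$, and therefore $a + b = \nu(xy) \in \nu(T)$. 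Thus $\nu(T)$ is closed under the partial addition on $M$, which is exactly what it means to be a sub-partial-monoid.

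There is no serious obstacle in this argument; the whole proof is essentially a one-line unpacking of the semi-strictness hypothesis. The conceptual content of the proposition is rather that condition (3) in the definition of semi-strict partial-homomorphism is precisely what is needed to descend closure properties from $T$ to $\nu(T)$: without it, knowing only that $\nu(x)+\nu(y)$ is defined in $M$ would not guarantee that $xy$ is defined (nonzero), let alone that it lies in $T$, and $\nu(T)$ could in general fail to be closed under the partial operation.
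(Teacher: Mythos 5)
Your argument is correct and is exactly the paper's intended proof: the paper disposes of the proposition with ``Immediate because of the extra condition,'' meaning precisely the semi-strictness property (3) that you unpack, together with the observation that $S\setminus\{0\}$ is a sub-partial-monoid of $(R\setminus\{0\},\cdot)$. Your write-up is just a fuller version of the same one-line reasoning.
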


\begin{proof} Immediate because of the extra condition.
\end{proof}

\begin{definition} A semi strict partial-valuation $\nu:R\setminus \{0\}\to M$ is monomial-like over $R_1$, a subring of $R$, if it satisfies the following property: for $x,y\in R\setminus\{0\}$ such that $\nu(x)=\nu(y)$, there is $u\in R_1^{\times}$ such that either $x-uy=0$ or $\nu(x-uy) > \nu(x)$.
\end{definition}

Here's an important structural result:

\begin{thm}\label{thm-10} Let $\nu:R\setminus \{0\}\to M$ be a partial-valuation on $R$, monomial-like over some subring $R_1$, where $M$ is a finite partial-monoid. Suppose that $a_1, .... , a_d$ generate $M$ as a partial-monoid (for every element $a\in M$ there are constants $\alpha_1, ... , \alpha_n\in \N$ such that the sum $\alpha_1a_1+\dots+\alpha_da_d$ is defined and equal to $a$). Let $r_i\in R$ be elements whose valuations are $\nu(r_i)=a_i$. Then $r_i$ generate $R$ as an algebra over the subring $R_1$.
\end{thm}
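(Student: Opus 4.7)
The plan is to prove by induction that every $x \in R\setminus\{0\}$ lies in the $R_1$-subalgebra $R_1[r_1,\dots,r_d]$; this suffices, as $0$ is trivially in any subalgebra. The induction is on the non-negative integer $n(x) := |\{b \in M : b > \nu(x)\}|$, which is finite because $M$ is finite. The strategy is the standard one from valuation theory: for each $x$, produce an explicit ``leading monomial'' $y$ in the $r_i$ with the same valuation as $x$, scale by a unit from $R_1$ supplied by the monomial-like hypothesis, and subtract to strictly raise the valuation; iterate, which must terminate by finiteness of $M$.

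Given $x$ with $\nu(x)=a$, use that $a_1,\dots,a_d$ generate $M$ to write $a = \alpha_1 a_1 + \cdots + \alpha_d a_d$ as a defined sum in $M$ with $\alpha_i \in \N$. Set $y := r_1^{\alpha_1}\cdots r_d^{\alpha_d}$. Since $\nu$ is semi-strict and the iterated sum $\sum \alpha_i\,\nu(r_i) = a$ is defined in $M$, a short induction on the number of factors (applying the semi-strict axiom (3) at each step) shows that the product $y$ is nonzero and $\nu(y)=a$.

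Now apply the monomial-like hypothesis to $x$ and $y$ (both of valuation $a$): there is $u \in R_1^{\times}$ such that either $x = uy$, or $\nu(x - uy) > a$. In the first case $x = u\, r_1^{\alpha_1}\cdots r_d^{\alpha_d} \in R_1[r_1,\dots,r_d]$ and we are done. In the second case, every $b > \nu(x-uy)$ also satisfies $b > a$ by transitivity, while $\nu(x-uy)$ itself is counted by $n(x)$ but not by $n(x-uy)$; thus $n(x-uy) < n(x)$, and by the induction hypothesis $x-uy \in R_1[r_1,\dots,r_d]$, so $x = uy + (x-uy)$ lies there as well. The base case $n(x)=0$ (i.e.\ $\nu(x)$ maximal in $M$) is subsumed by the same argument: the possibility $\nu(x-uy) > a$ is vacuous, forcing $x=uy$.

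The only genuine obstacle is the verification that the ``leading monomial'' $y$ is nonzero and carries the predicted valuation $a$; this is where semi-strictness (rather than merely partial-homomorphism) is essential, because it lets one conclude that a product is defined in $R\setminus\{0\}$ from the mere fact that the sum of valuations is defined in $M$. Once that is in hand, everything else is a finite-poset induction combined with one invocation of the monomial-like hypothesis, and no further hypotheses on $M$ (such as total order) are required.
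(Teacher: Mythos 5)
Your proof is correct and follows essentially the same route as the paper's: build the monomial $r_1^{\alpha_1}\cdots r_d^{\alpha_d}$ of the same valuation (nonzero by semi-strictness), use the monomial-like hypothesis to subtract a unit multiple and strictly raise the valuation, and conclude by reverse induction on the finite ordered set $M$. Your counting function $n(x)$ merely makes explicit the ``standard reverse induction argument'' that the paper leaves implicit, starting from maximal elements of $M$.
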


\begin{proof} Let $a\in M$ be a maximal element ($M$ is finite), and let $r\in R$ such that $\nu(r)=a$. Then $a=\alpha_1a_1+... \alpha_da_d$, and so the ``monomial" $\tilde{r}=r_1^{\alpha_1}... r_d^{\alpha_d}$ is nonzero since $\nu$ is semi-strict and $\nu(\tilde{r})=a$. There's $u\in R_1^{\times}$ such that $r-u\tilde{r}\neq 0$ then $\nu(r-u\tilde{r}) > \nu(r)$ which is not possible. Hence $r=u\tilde{r}$. By a standard reverse induction argument the result follows, since we're assuming $M$ is finite.
\end{proof}

A natural example: take any field $\K$ and consider the $\K$-algebra $\K[x]/x^n$, where the partial-valuation is $\nu(a_ix^i+ \text{higher order terms})=i$, taking values in $[\mathbf{n-1}]$. It's easily checked that this is a strict partial valuation. This valuation is monomial-like over the coefficient field $\K$:

\begin{lem}\label{lem-11} Let $a,b\in \K[x]/x^n$ with $\nu(a)=\nu(b)$, then there is a nonzero $u\in \K$ such that either $a=ub$ or $\nu(a-ub)$ has valuation strictly larger than $\nu(a)$.
\end{lem}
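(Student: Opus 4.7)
The plan is to unwind the definition of $\nu$ and cancel leading coefficients by an explicit choice of $u$.

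Write $a = a_i x^i + a_{i+1} x^{i+1} + \cdots + a_{n-1} x^{n-1}$ and $b = b_i x^i + b_{i+1} x^{i+1} + \cdots + b_{n-1} x^{n-1}$, where $i = \nu(a) = \nu(b)$. By definition of $\nu$, the leading coefficients satisfy $a_i \ne 0$ and $b_i \ne 0$, so both lie in $\K^{\times}$. Set $u := a_i b_i^{-1} \in \K^{\times}$.

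Then the coefficient of $x^i$ in $a - ub$ is $a_i - u b_i = 0$ by construction. Hence $a - ub$ is a polynomial (mod $x^n$) whose terms of degree $\le i$ all vanish. Either all remaining coefficients $a_j - u b_j$ for $j > i$ also vanish, in which case $a - ub = 0$ and $a = ub$, or else some $a_j - u b_j \ne 0$ for some smallest $j > i$, in which case $\nu(a - ub) = j > i = \nu(a)$.

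There is no real obstacle here; the only thing to check is that $u$ is genuinely a unit in $\K$, which is immediate since $b_i \ne 0$ and $\K$ is a field. The statement is essentially the standard observation that within $\K[x]/x^n$ the leading coefficient of an element of fixed valuation can be scaled to match that of any other element of the same valuation, which is exactly the ``monomial-like'' hypothesis needed to apply Theorem \ref{thm-10}.
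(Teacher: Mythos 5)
Your proof is correct and follows exactly the paper's own argument: write $a$ and $b$ with nonzero leading coefficients at the common valuation, take $u=a_ib_i^{-1}$, and note that the leading term of $a-ub$ cancels. The only difference is that you spell out the case distinction ($a-ub=0$ versus strictly larger valuation) which the paper leaves as ``the result follows.''
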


\begin{proof} Let $a=a_mx^m + ... $, $b=b_mx^m + .... $, with $a_m, b_m$ nonzero, then take $u=a_mb_m^{-1}$ and the result follows. 
\end{proof}

\begin{lem}\label{lem-12}  Define the following function on $R=\Z[x]/(p^N, x^n)$: Write a nonzero element $x$ as a sum of powers in increasing order $x=a_kx^k+ \text{ higher order terms}$, where $a_i\in \Z/p^N$ is nonzero, then set $\nu(x)=(k,\nu_1(a_k))$ (where $\nu_1$ is the natural partial-valuation on $\Z/p^N$ given by $\nu_1(up^m)=m$ where $u$ invertible). Then $\nu:R\setminus \{0\}\to \mathcal{M}_{n,N}$ is a strict partial-valuation, which is monomial-like over the coefficient ring $\Z/p^N$.
\end{lem}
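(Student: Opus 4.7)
The plan is to verify the four requirements in turn: $\nu$ is surjective, a semi-strict partial-homomorphism, satisfies the strict non-Archimedean inequality, and is monomial-like over $R_1=\Z/p^N$. Every nonzero $r\in R$ has a unique representation $r=a_kx^k+a_{k+1}x^{k+1}+\cdots+a_{n-1}x^{n-1}$ with $a_k\neq 0$ in $\Z/p^N$, so $\nu(r)=(k,\nu_1(a_k))$ is well defined, $\nu(1)=(0,0)$, and $\nu(p^m x^k)=(k,m)$ gives surjectivity onto $\mathcal{M}_{n,N}$.

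For the semi-strict multiplicative property, let $\nu(a)=(k,m)$ and $\nu(b)=(l,m')$. Since no monomial of degree $<k$ (resp.\ $<l$) appears in $a$ (resp.\ $b$), the coefficient of $x^{k+l}$ in $ab$ equals exactly $a_kb_l$. The partial sum $\nu(a)+\nu(b)=(k+l,m+m')$ is defined in $\mathcal{M}_{n,N}$ iff $k+l\leq n-1$ and $m+m'\leq N-1$; under these conditions $a_kb_l=p^{m+m'}\cdot(\text{unit})$ is nonzero in $\Z/p^N$ and $x^{k+l}\neq 0$ in $R$, hence $ab\neq 0$ with $\nu(ab)=(k+l,m+m')=\nu(a)+\nu(b)$. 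This is the semi-strict condition, and it implies the partial-homomorphism axiom.

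Non-Archimedean strictness splits on $\nu(a)$ versus $\nu(b)$. If $\nu(a)<\nu(b)$ in lex order, then either $k<l$ (so the $x^k$-coefficient of $a+b$ equals $a_k$) or $k=l$ and $m<m'$ (so $b_k\in p^{m+1}(\Z/p^N)$ while $a_k\notin p^{m+1}(\Z/p^N)$, whence $\nu_1(a_k+b_k)=m$); in both cases $\nu(a+b)=(k,m)=\min\{\nu(a),\nu(b)\}$. If $\nu(a)=\nu(b)=(k,m)$ and $a+b\neq 0$, every $x^i$-coefficient for $i<k$ cancels and the $x^k$-coefficient lies in $p^m(\Z/p^N)$, so $\nu(a+b)\geq(k,m)$ in lex order.

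For the monomial-like property over $\Z/p^N$, given $\nu(a)=\nu(b)=(k,m)$, write $a_k=u_1p^m$ and $b_k=u_2p^m$ with $u_1,u_2\in(\Z/p^N)^{\times}$, and set $u=u_1u_2^{-1}$. Then the $x^k$-coefficient of $a-ub$ is $u_1p^m-u_1p^m=0$, so either $a-ub=0$ or its first nonzero coefficient sits at some degree $k'>k$, giving $\nu(a-ub)>(k,m)=\nu(a)$ in lex order. The argument is entirely mechanical; the only subtle point is matching the two failure modes of the partial sum on $\mathcal{M}_{n,N}$ (overflow of the $x$-degree past $n-1$, or of the $p$-adic valuation past $N-1$) with the two ways a product can collapse to zero in $R=\Z[x]/(p^N,x^n)$, which is exactly what the lex-ordered product partial-monoid is designed to record.
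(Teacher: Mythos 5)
Your proof is correct and follows essentially the same route as the paper: compute the lowest-order term of a product to get semi-strictness (using that $\nu_1$ is a strict partial valuation on $\Z/p^N$), do the lex-order case split for the non-Archimedean equality, and use that elements of $\Z/p^N$ of equal $\nu_1$-value differ by a unit to get the monomial-like property. Your added checks of surjectivity and of the inequality in the equal-valuation case are fine, minor completions of what the paper leaves implicit.
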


\begin{proof} Let $z=a_jx^j+\text{ higher order terms}$, $w=b_kx^k+\text{higher order terms}$. Notice that $(j, \nu_1(a_j))+(k, \nu_1(b_k))$ is defined if and only if $j+k<n$, and $\nu_1(a_j)+\nu_1(b_k)<N$. So if this is the case, and since $\nu_1$ is a strict partial valuation (with values in $[\mathbf{N-1}]$), we have $a_jb_k\neq 0$, and $\nu_1(a_jb_k)=\nu_1(a_j)+\nu_1(b_k)$, so $zw= a_jb_kx^{j+k}+\text{higher order terms}$, and $\nu(zw)=(j+k, \nu_1(a_jb_k))=(j,\nu_1(a_j))+(k,\nu_1(b_k))=\nu(z)+\nu(w)$. This shows it's semi-strict. To show it's strict, notice that $\mathcal{M}_{n,N}$ is indeed totally ordered (with lexicographic order as described before), and moreover, when $\nu(z)\neq \nu(w)$, either $j\neq k$ or $\nu_1(a_j)\neq \nu(b_k)$. 

\begin{enumerate}

\item Say that $j\neq k$ and without loss of generality, $j < k$, then $z+w = a_jx^k+\text{higher order terms}$, and so $\nu(z+w)= \nu(z)=\min\{\nu(z), \nu(w)\}$ since by the definition of lexicographic order here $(j, \ast) < (k, \star)$ for any $\ast, \star$ when $j < k$.

\item Say that $j=k$, and without loss of generality, $\nu_1(a_j) < \nu_1(b_j)$. Then $z+w=(a_j+b_j)x^k+\text{higher order terms}$, and $\nu(z+w)=(j, \nu_1(a_j+b_j))=(j, \nu(a_j))=\min\{\nu(z), \nu(w)\}$ since $\nu_1$ is a strict partial valuation and using again the definition of lexicographic order.

\end{enumerate}

Finally notice by definition $\nu_1$ satisfies that $\nu_1(\alpha)=\nu_1(\beta)$, for $\alpha, \beta\in \Z/p^N$ implies there exist $u\in (\Z/p^N)^{\times}$ such that $\alpha=u\beta$. Hence if $z=\alpha x^m+\text{higher order terms}$, $w=\beta x^m+\text{higher order terms}$, and $\nu(z)=\nu(w)$, one has using the $u$ before that $z-uw = (\alpha-u\beta)x^m+ \text{higher order terms}$, has only powers higher than $m$, hence if nonzero, $\nu(z-uw) > \nu(z)=\nu(w)$.
\end{proof}


\section{Subalgebras of $\K[x]/x^n$}\label{sec-4}

\subsection{Setting}

Let $\K$ be a field and consider the $\K$-algebra $\K[x]/{x^n}$. Of course this is the same as $\K[[x]]/x^n$ and this viewpoint will be more appropriate later.

Let $R\subseteq \K[x]/x^n$ be a $\K$-subalgebra. Notice that for the prime fields $\Q ,\, \F_p$, $\K$-subalgebra is the same as a subring. All the linear maps, bases, and subalgebras are assumed to be $\K$-linear, unless otherwise specified.

\begin{definition} For a nonzero polynomial $r\in \K[x]/x^n$ one has a unique minimal $i$ such that $r=a_ix^i+...\text{ higher order terms}$, with $a_i\neq 0\in \K$. Define $\nu(r)=i$. This is the strict partial-valuation defined to above $r$. 

Here $i$ is called an exponent of $R$ and we define $E(R)$ as the set of exponents.
\end{definition}  

\begin{obs} Define $\nu(0)=\infty$ as a formal symbol and with the rule $i+\infty=\infty$ for any $i\in [0,n-1]$ and natural order $i\leq \infty$ for all $i$. 
\end{obs}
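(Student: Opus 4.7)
The statement is a definitional convention rather than a substantive claim: it adjoins a formal symbol $\infty$ to the target partial-monoid (either $[\mathbf{n-1}]$ in the $\K[x]/x^n$ setting, or $\mathcal{M}_{n,N}$ in the $\Z[x]/(p^N,x^n)$ setting), extends $+$ by $i+\infty = \infty = \infty + i$, extends $\leq$ by $i\leq \infty$, and extends $\nu$ by $\nu(0)=\infty$. Consequently, there is nothing to prove in the strict sense; what I would record, for the sake of rigor, are the compatibility checks showing that the extended $(M\sqcup\{\infty\},+,\leq)$ is still an ordered partial-monoid and that the extended $\nu:R\to M\sqcup\{\infty\}$ is still a (semi-strict, strict) partial-valuation in the sense of the definitions above.

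The plan is a sequence of four routine verifications. First, I would check that $M\sqcup\{\infty\}$ with the declared rules is a commutative partial-monoid with neutral element $0$: addition with $\infty$ is always defined and associative because any bracketed expression involving $\infty$ collapses to $\infty$. Second, I would verify that the extended order is compatible with the extended addition: the only new instances are those in which some term is $\infty$, in which case compatibility follows from $a+b \leq \infty$ for all $a,b$. Third, I would check multiplicativity of the extended $\nu$, i.e. $\nu(xy)=\nu(x)+\nu(y)$ on the nose whenever one of $x,y$ equals $0$; both sides are $\infty$, so this is automatic. Fourth, I would check that the non-Archimedean condition remains valid: if $x+y=0$ one has $\nu(x+y)=\infty$, and trivially $\infty \geq \mu$ for every $\mu\in M$, so in particular $\infty\geq \min\{\nu(x),\nu(y)\}$.

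No step here is an obstacle; the entire observation is a bookkeeping device whose purpose is to eliminate the awkward exclusion of $0$ from the domain of $\nu$ when writing inequalities such as $\nu(r)\geq i$ or statements like ``$\nu(z-uw) > \nu(z)$'' that appear in Lemmas \ref{lem-11} and \ref{lem-12}. I would therefore not expect any subtlety to arise, and I would present the observation simply as a notational extension, with the four checks above gathered into a single brief sentence.
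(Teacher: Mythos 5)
Your proposal is correct and matches the paper's treatment: the observation is a notational convention, and the paper accordingly supplies no proof at all. Your four routine compatibility checks are all valid and harmless additions, but nothing more is required.
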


\begin{lem} 
\begin{enumerate}
\item The set $[0, n-1]\cup \{\infty\}$ is an ordered monoid. 
\item $\displaystyle \nu: \K[x]/x^n\to [0, n-1]\cup \{\infty\}$ is a homomorphism: $\nu(1)=0, \nu(r_1r_2)=\nu(r_1)+\nu(r_2)$. 
\item The Non-Archimedean property holds: for any two elements $r_1, r_2$, $\nu(r_1+r_2)\geq \min\{\nu(r_1),\nu(r_2)\}$ and equality holds if $\nu(r_1)\neq \nu(r_2)$.
\end{enumerate}
\end{lem}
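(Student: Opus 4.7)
The plan is to dispatch the three bullets in order, each being essentially a routine verification that the degree function on $\K[x]/x^n$ behaves as expected, with $\infty$ acting as an absorbing element that captures the cases where the naive sum of degrees would exceed $n-1$ (so the product vanishes in the quotient).

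For (1), I would first pin down the monoid operation: on $[0,n-1]$, take ordinary integer addition whenever the sum still lies in $[0,n-1]$, declare the sum to equal $\infty$ when it would be $\geq n$, and require $\infty$ to be absorbing (so $i+\infty=\infty+i=\infty$ for all $i$). Then $0$ is the neutral element, commutativity is inherited from $\Z$, and associativity is an easy case split on whether $\infty$ appears or whether the running sum crosses the threshold $n$. For the order, extend the natural order on $[0,n-1]$ by declaring $i\leq\infty$ for every $i$; compatibility with the sum is immediate since $\infty$ dominates and ordinary addition is monotone on $\N$.

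For (2), the definition gives $\nu(1)=0$ at once because $1=1\cdot x^0$ has nonzero constant coefficient. For multiplicativity, fix nonzero $r_1,r_2\in\K[x]/x^n$ and write $r_1=a_ix^i+\cdots$, $r_2=b_jx^j+\cdots$ with $a_i,b_j\in\K^\times$. Multiplying in $\K[[x]]$ yields leading term $a_ib_jx^{i+j}$ with $a_ib_j\neq 0$ since $\K$ is a field. If $i+j\leq n-1$, this term survives in the quotient and $\nu(r_1r_2)=i+j=\nu(r_1)+\nu(r_2)$. If $i+j\geq n$, every term of $r_1r_2$ has degree $\geq n$, so $r_1r_2=0$ in $\K[x]/x^n$ and $\nu(r_1r_2)=\infty$, matching the monoid sum by definition. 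The cases where $r_1$ or $r_2$ is $0$ are handled by the absorbing convention.

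For (3), the non-Archimedean property is again an easy case analysis. If $r_1=0$ or $r_2=0$ the inequality is trivial, so assume both nonzero and (without loss of generality) $i=\nu(r_1)\leq\nu(r_2)=j$. All terms of $r_1$ and $r_2$ have degree $\geq i$, so either $r_1+r_2=0$, giving $\nu(r_1+r_2)=\infty\geq i$, or $r_1+r_2$ has some lowest nonzero term of degree $\geq i$, giving $\nu(r_1+r_2)\geq i=\min\{\nu(r_1),\nu(r_2)\}$. When $i<j$, the coefficient of $x^i$ in $r_1+r_2$ is $a_i\neq 0$, forcing equality.

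The only mild subtlety is keeping straight the conventions involving $\infty$, in particular verifying that the monoid law and the multiplicativity identity $\nu(r_1r_2)=\nu(r_1)+\nu(r_2)$ remain consistent precisely on the ``overflow'' regime where polynomial multiplication in $\K[[x]]$ produces something killed by $x^n$; otherwise everything reduces to elementary properties of the degree of the lowest nonzero term.
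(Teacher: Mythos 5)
Your proposal is correct and is exactly the routine verification the paper has in mind: its own proof of this lemma is simply ``Immediate,'' leaving the case analysis on the overflow/$\infty$ conventions to the reader, which you have spelled out accurately.
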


\begin{proof} Immediate.
\end{proof}

\begin{prop} $E(R)$ is partial-monoid. 
\end{prop}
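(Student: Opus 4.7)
The plan is to recognize this as a direct consequence of Proposition \ref{prop-9}, once one checks that $\nu$ on $\K[x]/x^n$ is semi-strict. Concretely, I would show that $E(R) = \nu(R\setminus\{0\})$ is the image of a sub-partial-monoid of the multiplicative partial-monoid $(\K[x]/x^n\setminus\{0\},\cdot)$ under a semi-strict partial-valuation, so the conclusion is immediate.

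First I would verify that $R\setminus\{0\}$ is a sub-partial-monoid of $(\K[x]/x^n\setminus\{0\},\cdot)$: if $r_1, r_2\in R\setminus\{0\}$ and the product $r_1r_2$ is defined in the multiplicative partial-monoid (that is, $r_1r_2\neq 0$ in $\K[x]/x^n$), then $r_1r_2\in R$ since $R$ is a subring, and it is nonzero, so it lies in $R\setminus\{0\}$.

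Next I would establish semi-strictness of $\nu$: if $r_1 = a_ix^i + (\text{higher})$ and $r_2 = b_jx^j + (\text{higher})$ with $a_i,b_j\in\K^\times$, and if $\nu(r_1)+\nu(r_2) = i+j$ is defined in $[\mathbf{n-1}]$ (i.e.\ $i+j\leq n-1$), then the leading term of $r_1r_2$ is $a_ib_jx^{i+j}$, which is nonzero in $\K[x]/x^n$ because $\K$ is a field and $i+j< n$. Hence $r_1r_2\neq 0$ and $\nu(r_1r_2)=i+j=\nu(r_1)+\nu(r_2)$, which is the semi-strict condition.

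With these two observations, Proposition \ref{prop-9} applies directly and tells us that $\nu(R\setminus\{0\}) = E(R)$ is a sub-partial-monoid of $[\mathbf{n-1}]$; in particular $0\in E(R)$ because $1\in R$ with $\nu(1)=0$, and closure under defined additions is exactly the sub-partial-monoid property. There is no real obstacle here — the only thing worth flagging is that one is not claiming total closure under addition in $[\mathbf{n-1}]$, only closure under those additions that stay within $[\mathbf{n-1}]$ (equivalently, products in $\K[x]/x^n$ that avoid the relation $x^n = 0$), which is precisely the partial-monoid notion set up in the previous section.
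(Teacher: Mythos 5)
Your proposal is correct and follows essentially the same route as the paper: the paper's proof is exactly an appeal to Proposition \ref{prop-9}, viewing $E(R)$ as the image of (the nonzero elements of) the subring $R$ under the partial-valuation $\nu$, whose (semi-)strictness the paper had already noted as easily checked. You simply spell out those routine verifications explicitly, which is fine.
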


\begin{proof} By Proposition \ref{prop-9}. Equivalently, it's the image of the multiplicative monoid $\K[x]/x^n$ under a monoid homomorphism.
\end{proof}


\subsection{The set of exponents $E(R)$ and generators}

A subalgebra $R\subseteq \K[x]/x^n$ lifts to a subalgebra $\tilde{R}$ of $\K[[x]]$ of finite codimension.

\begin{lem} The finite codimension subalgebras of $\K[[x]]$ are exactly those coming from $\K[x]/x^n$.
\end{lem}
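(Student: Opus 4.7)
The plan is to handle the two directions separately. The backward direction is immediate: if $\tilde{R} \subseteq \K[[x]]$ is the preimage under the truncation map $\pi_n : \K[[x]] \to \K[x]/x^n$ of a subalgebra $R \subseteq \K[x]/x^n$, then $\tilde{R} \supseteq x^n\K[[x]] = \ker \pi_n$ and $\tilde{R}/x^n\K[[x]] \cong R$, so $\tilde{R}$ has $\K$-codimension at most $n$. For the forward direction I need to show that any finite-codimension $\K$-subalgebra $\tilde{R}$ of $\K[[x]]$ contains $x^n\K[[x]]$ for some $n$; equivalently, that the conductor $\mathfrak{c} := \{f \in \K[[x]] : f\K[[x]] \subseteq \tilde{R}\}$ is nonzero, since $\K[[x]]$ is a DVR and every nonzero ideal has the form $x^n\K[[x]]$.

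To produce a nonzero conductor element, first note that finite codimension together with $\K \subseteq \tilde{R}$ makes $\K[[x]]$ finitely generated as a $\tilde{R}$-module: lifting a $\K$-basis of the quotient $\K[[x]]/\tilde{R}$ to elements $g_2, \dots, g_{c+1}$ of $\K[[x]]$ gives $\K[[x]] = \tilde{R}\cdot 1 + \tilde{R} g_2 + \cdots + \tilde{R} g_{c+1}$. Next, for each index $i \ge 2$ consider the $\K$-linear map $\phi_i : \tilde{R} \to \K[[x]]/\tilde{R}$ defined by $f \mapsto fg_i + \tilde{R}$. Since $\tilde{R}$ is infinite-dimensional over $\K$ (it has finite codimension in the infinite-dimensional $\K[[x]]$) while the target is finite-dimensional, $\ker\phi_i$ contains a nonzero element $f_i \in \tilde{R}$ with $f_i g_i \in \tilde{R}$. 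Letting $F := f_2 f_3 \cdots f_{c+1}$, the integral domain property gives $F \ne 0$, and for each $i$ one has $Fg_i = \bigl(\prod_{j\ne i} f_j\bigr)(f_i g_i) \in \tilde{R}$ (trivially for $i=1$). Hence $F\cdot \K[[x]] = F\sum_i \tilde{R}\, g_i \subseteq \tilde{R}$, so $F$ is a nonzero element of $\mathfrak{c}$, and the DVR property of $\K[[x]]$ finishes the proof.

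The main obstacle is really the single step producing the $f_i$, which uses crucially that $\tilde{R}$ is a $\K$-subalgebra rather than merely a $\K$-subspace: a finite-codimension $\K$-subspace of $\K[[x]]$ need not contain any $x^n\K[[x]]$ (a wild $\K$-linear functional $\K[[x]] \to \K$ furnishes codimension-one counterexamples), whereas the subalgebra assumption forces the conclusion via the conductor argument above. Everything else is formal manipulation with the DVR structure, so the substantive content is the infinite-versus-finite-dimensional counting observation that yields the $f_i$.
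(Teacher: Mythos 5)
Your proof is correct, but it follows a genuinely different (and more self-contained) route than the paper. The paper disposes of the key point by citation: it invokes \cite[Theorem 1]{Franco}, which says that finite-codimension subalgebras of $\K[[x]]$ correspond to subalgebras of finite-dimensional quotients $\K[[x]]/I$ with $I$ a finite-codimension ideal, and then, exactly as you do at the end, uses the fact that $\K[[x]]$ is a DVR whose nonzero ideals are the $x^n\K[[x]]$. You instead prove the needed containment $x^n\K[[x]]\subseteq \tilde{R}$ directly: finite codimension plus $\K\subseteq\tilde{R}$ makes $\K[[x]]$ a finitely generated $\tilde{R}$-module, the finite-versus-infinite dimension count produces nonzero $f_i\in\tilde{R}$ with $f_ig_i\in\tilde{R}$, and the product $F=f_2\cdots f_{c+1}$ is a nonzero conductor element because $\K[[x]]$ is a domain; the conductor is then a nonzero ideal of $\K[[x]]$ contained in $\tilde{R}$, so the DVR structure gives $x^n\K[[x]]\subseteq\tilde{R}$ and $\tilde{R}$ is the preimage of its image in $\K[x]/x^n$. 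What your approach buys is independence from the external reference, at the modest cost of using the integral-domain property of $\K[[x]]$ (harmless here), and your closing remark correctly isolates where the algebra hypothesis, as opposed to a mere linear-subspace hypothesis, is used; the paper's approach buys brevity and a statement valid in the generality of the cited theorem.
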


\begin{proof}
 By \cite[Theorem 1]{Franco}, those finite codimension subalgebras correspond to subalgebras of finite dimensional quotients $\K[[x]]/I$ where $I\subseteq \K[[x]]$ is an finite codimension ideal. Now, $\K[[x]]$ is a DVR and its nonzero ideals are $x^n\K[[x]]$, hence the claim. 
\end{proof}

The discrete valuation on $\K[[x]]$ is: $\nu(a_ix^i+...\text{ higher order terms}) = i$, the same as before. An important property of this valuation is (with proof as in Lemma \ref{lem-11}) that this is a monomial-like valuation over $\K$.

\begin{lem} We have $E(\tilde{R})=E(R)\cup \{n, n+1, n+2, ... \}$ hence $E(\tilde{R})$ is a numerical monoid, i.e. a submonoid of $(\N, +)$ with finite complement (It's also true that such a monoid has a unique, finite, set of minimal generators \cite{RS}).
\end{lem}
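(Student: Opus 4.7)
The plan is to show the two inclusions of the set-theoretic equality and then read off the numerical-monoid property from the containment $\{n,n+1,\ldots\}\subseteq E(\tilde{R})$.

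First I would unpack what $\tilde{R}$ is. From the preceding lemma, the finite-codimension subalgebras of $\K[[x]]$ correspond (via reduction mod a power of $x$) to subalgebras of some $\K[[x]]/x^n=\K[x]/x^n$, so $\tilde{R}$ can be taken to be $\pi^{-1}(R)$ where $\pi\colon\K[[x]]\to\K[x]/x^n$ is the canonical projection. In particular $\tilde{R}\supseteq\ker\pi=x^n\K[[x]]$, which immediately gives $\{n,n+1,n+2,\ldots\}\subseteq E(\tilde{R})$ since $x^k\in\tilde{R}$ has valuation $k$ for every $k\geq n$.

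Next I would handle the part of $E(\tilde{R})$ lying below $n$. For the inclusion $E(R)\subseteq E(\tilde{R})$, given $i\in E(R)$ (note $i\leq n-1$ automatically) pick $r\in R\setminus\{0\}$ with $\nu(r)=i$; any set-theoretic lift of $r$ to $\tilde{R}$ has leading term $a_ix^i$ with the same $a_i\neq 0$, so it has valuation $i$ in $\K[[x]]$. Conversely, if $\tilde r\in\tilde{R}$ has valuation $i<n$, then $\pi(\tilde r)\in R$ retains the leading term $a_ix^i$ since the truncation only kills terms of degree $\geq n$; hence $\nu(\pi(\tilde r))=i$ and $i\in E(R)$. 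Combining these with the previous paragraph yields $E(\tilde{R})=E(R)\cup\{n,n+1,\ldots\}$.

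Finally, $E(\tilde{R})$ is a sub-partial-monoid of $\N$ by Proposition \ref{prop-9} applied to the valuation on $\K[[x]]$, and since $\K[[x]]$ is a domain the monoid operation is totally defined, so $E(\tilde{R})$ is an honest submonoid of $(\N,+)$. Its complement is contained in $\{0,1,\ldots,n-1\}$, hence finite, which is precisely the definition of a numerical monoid. There is no real obstacle here; the only subtlety to state cleanly is the two-sided comparison between $\nu$ on $\K[[x]]$ and $\nu$ on $\K[x]/x^n$ in the range $i<n$, where the leading coefficient is preserved by both the projection and any lift.
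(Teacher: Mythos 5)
Your proof is correct and fills in exactly the argument the paper leaves as ``Immediate'': identifying $\tilde{R}=\pi^{-1}(R)\supseteq x^n\K[[x]]$, comparing valuations below degree $n$ via lifts and the projection, and noting that the domain $\K[[x]]$ makes $E(\tilde{R})$ an honest submonoid with finite complement. No discrepancies with the paper's intended reasoning.
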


\begin{proof} Immediate.
\end{proof}

From now we'll work with finite index $\tilde{R}\subseteq \K[[x]]$ in this way.

\begin{prop} $\tilde{R}$ contains $x^n\K[[x]]$ iff $E(\tilde{R})$ contains $\{n, n+1, ...\}$.
\end{prop}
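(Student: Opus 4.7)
The plan is to prove the two inclusions separately. The forward implication is immediate: if $x^n\K[[x]] \subseteq \tilde{R}$, then for every $m \geq n$ the monomial $x^m$ belongs to $\tilde{R}$ and satisfies $\nu(x^m) = m$, so $\{n, n+1, n+2, \ldots\} \subseteq E(\tilde{R})$.

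For the converse, I would use a successive-approximation argument which is morally the same device as in the proof of Theorem \ref{thm-10}, exploiting that $\nu$ is monomial-like over $\K$ (Lemma \ref{lem-11}). First, I would invoke the finite codimension of $\tilde{R}$: the conductor of $\tilde{R}$ in $\K[[x]]$ is a nonzero ideal of the DVR $\K[[x]]$, so it has the form $x^M\K[[x]]$ for some $M \geq n$, and in particular $x^M\K[[x]] \subseteq \tilde{R}$. Next, for each $m$ with $n \leq m \leq M-1$, I would use the hypothesis $m \in E(\tilde{R})$ to pick $r_m \in \tilde{R}$ with $\nu(r_m) = m$, and write $r_m = b_m x^m + (\text{higher order})$ with $b_m \in \K^\times$.

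Given any $g = \sum_{i \geq n} c_i x^i \in x^n\K[[x]]$, I would build a sequence $g_0 = g$ and inductively
\[ g_{k+1} = g_k - \alpha_{n+k}\, r_{n+k}, \]
where $\alpha_{n+k} \in \K$ is chosen so that if $\nu(g_k) = n+k$ the leading term is killed (use the coefficient of $g_k$ at $x^{n+k}$ divided by $b_{n+k}$), and $\alpha_{n+k} = 0$ otherwise. In either case $\nu(g_{k+1}) > n+k$. After $M - n$ steps we reach $\nu(g_{M-n}) \geq M$, hence $g_{M-n} \in x^M\K[[x]] \subseteq \tilde{R}$, and therefore
\[ g \;=\; \sum_{k=0}^{M-n-1} \alpha_{n+k}\, r_{n+k} \;+\; g_{M-n} \;\in\; \tilde{R}, \]
which gives $x^n\K[[x]] \subseteq \tilde{R}$.

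The only conceptual point that needs care is the termination of the approximation: on a general complete ring one would in principle need infinitely many steps and would have to appeal to some topological closedness of $\tilde{R}$. Here the finite codimension of $\tilde{R}$ supplies the conductor $x^M\K[[x]]$, converting the infinite process into a finite one and letting the residual tail be absorbed into $\tilde{R}$ directly.
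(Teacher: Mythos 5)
Your argument is correct, and it shares the paper's core device: choose elements $r_k\in\tilde{R}$ with $\nu(r_k)=k$ for each $k\geq n$ and kill coefficients one valuation at a time, exactly in the spirit of Lemma \ref{lem-11} and Theorem \ref{thm-10}. Where you differ is in how the process terminates. The paper runs the approximation indefinitely, producing a sequence $s_l$ converging to the monomial $x^n$, and then invokes completeness (closedness of a finite-index subalgebra of $\K[[x]]$) to conclude $x^n\in\tilde{R}$; the passage from the monomials $x^m$, $m\geq n$, to the whole ideal $x^n\K[[x]]$ is left implicit there and again rests on closedness. You instead truncate the process after finitely many steps by first securing $x^M\K[[x]]\subseteq\tilde{R}$ and absorbing the tail, which makes the argument purely finite-dimensional linear algebra, avoids any appeal to limits, and directly certifies membership of an arbitrary element of $x^n\K[[x]]$ rather than just the monomials. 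The one step you assert rather than prove, that finite codimension forces a nonzero conductor $x^M\K[[x]]\subseteq\tilde{R}$, is legitimate but deserves a citation: in this paper it is immediate either from the preceding lemma stating that finite codimension subalgebras of $\K[[x]]$ are exactly the preimages of subalgebras of $\K[x]/x^M$, or simply from the standing setup in which $\tilde{R}$ is defined as such a preimage; alternatively one can prove it by noting that the annihilator of the finite-dimensional $\tilde{R}$-module $\K[[x]]/\tilde{R}$ is a finite-codimension (hence nonzero) ideal of $\K[[x]]$ contained in $\tilde{R}$. With that reference supplied, your proof is complete, and arguably cleaner in that it trades the topological closedness argument for the conductor.
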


\begin{proof} The converse needs to be checked only. Assume $\{n, n+1, .... \}\subseteq E(\tilde{R})$. Then there are $r_{k}\in \tilde{R}$ monic such that $\nu(r_k)=k$, for $k\geq m$. Write $r_n = x^n+\alpha_{n+1}x^{n+1}+\alpha_{n+2}x^{n+2}+... $. There are constants $\beta_k$ such that the sequence $s_l = r_n-\beta_{n+1}r_{n+1}-.... -\beta_{n+l}r_{n+l}$ has coefficients $0$ for $x^m$ for $n < m < l$, hence $s_l$ converges to $x^m$. Since $\tilde{R}$ is complete (finite index in $\K[[x]]$), $x^m\in \tilde{R}$. 
\end{proof}

Consider now the map $\pi: \K[[x]]\to \K[[x]]/x^n$. Then for $\tilde{R}\subseteq \K[[x]]$ containing $x^n\K[[x]]$, let $R\subseteq \K[[x]]/x^n$ be its image. 

\begin{lem} $\displaystyle \eme_{R}/\eme_{R}^2 \cong \eme_{\tilde{R}}/(\eme_{\tilde{R}}^2+x^n\K[[x]])\cong \frac{\eme_{\tilde{R}}/\eme_{\tilde{R}}^2}{(\eme_{\tilde{R}}^2+x^n\K[[x]])/\eme_{\tilde{R}}^2}$
\end{lem}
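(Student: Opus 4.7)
The plan is to use the restriction of the projection $\pi:\K[[x]]\to \K[[x]]/x^n$ to $\tilde{R}$, which gives a surjection of local rings $\pi:\tilde{R}\to R$ (surjective by definition of $R=\pi(\tilde{R})$, and local by Proposition~\ref{prop-1} once one notes that $\tilde{R}$ is local with maximal ideal $\eme_{\tilde{R}}=\tilde{R}\cap x\K[[x]]$). Since $n\geq 1$, we have $x^n\K[[x]]\subseteq \eme_{\tilde{R}}$, and because $\tilde{R}$ contains $x^n\K[[x]]$ (by the previous proposition), the kernel of $\pi:\tilde{R}\to R$ is exactly $x^n\K[[x]]$.

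Next I would record two facts: (i) $\pi(\eme_{\tilde{R}})=\eme_R$, which is Proposition~\ref{prop-1}(2); and (ii) $\pi(\eme_{\tilde{R}}^2)=\eme_R^2$, which follows from (i) together with the general identity $\pi(IJ)=\pi(I)\pi(J)$ valid for surjective ring homomorphisms (any element of $\pi(I)\pi(J)$ is a finite sum $\sum \pi(a_i)\pi(b_i)=\pi(\sum a_ib_i)\in \pi(IJ)$, and the reverse inclusion is clear).

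With these in hand, the first claimed isomorphism follows from the standard first isomorphism theorem applied to the induced surjective $\K$-linear map
\[
\bar{\pi}\colon \eme_{\tilde{R}}/\eme_{\tilde{R}}^2 \longrightarrow \eme_R/\eme_R^2,\qquad \bar{y}\mapsto \overline{\pi(y)}.
\]
Its kernel consists of $\bar{y}$ with $\pi(y)\in \eme_R^2=\pi(\eme_{\tilde{R}}^2)$, equivalently $y\in \eme_{\tilde{R}}^2+\ker(\pi)=\eme_{\tilde{R}}^2+x^n\K[[x]]$, giving
\[
\eme_R/\eme_R^2\cong \eme_{\tilde{R}}/(\eme_{\tilde{R}}^2+x^n\K[[x]]).
\]
The second isomorphism is the third isomorphism theorem applied to the chain $\eme_{\tilde{R}}^2\subseteq \eme_{\tilde{R}}^2+x^n\K[[x]]\subseteq \eme_{\tilde{R}}$.

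There is no real obstacle here; the only bookkeeping point worth flagging is the verification that $x^n\K[[x]]\subseteq \eme_{\tilde{R}}$ (so that it makes sense to reduce it modulo $\eme_{\tilde{R}}^2$ inside the quotient $\eme_{\tilde{R}}/\eme_{\tilde{R}}^2$), and that $\pi$ genuinely restricts to a surjection of local rings in the sense of Proposition~\ref{prop-1}, which is immediate once one observes that $\K$ provides a common coefficient field so the residue field identifications are automatic.
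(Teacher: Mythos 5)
Your argument is correct and is essentially the paper's own (the paper dismisses the lemma as clear precisely because $x^n\K[[x]]\subseteq \tilde{R}$, which is exactly the fact you use to identify $\ker(\pi|_{\tilde{R}})=x^n\K[[x]]$ and then apply the first and third isomorphism theorems). Your write-up just makes the implicit bookkeeping explicit; there is no gap.
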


\begin{proof} Clear since $x^n\K[[x]]\subseteq \tilde{R}$.
\end{proof}

The following result connects generators of the monoid $E(\tilde{R})$ and algebra generators of $\tilde{R}$:

\begin{prop}\label{prop-19} Let $\{a_1, ... , a_d\}$ be the minimal generating set for $E(\tilde{R})$. The following are equivalent:

\begin{enumerate}

\item Then the vector space $\eme_{\tilde{R}}/\eme_{\tilde{R}}^2$ has a basis $\{r_1 ,... , r_d\}$ where $\nu(r_i)=a_i$
\item For all $n$ such that $x^n\K[[x]]\subseteq \tilde{R}$, let $E_{(n)} = \{a_i \, | a_{i} < n\}$ be the a minimal generating set for $E(R=\tilde{R}/x^n)$ as partial-monoid. Then $\{r_i \, | a_i\in E_{(n)}\}$ is a basis for $\eme_{R}/\eme_{R}^2$.
\end{enumerate}

\end{prop}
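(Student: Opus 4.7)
The plan is to treat (1) as the main assertion, proved using the strict non-Archimedean property of $\nu$ together with the monomial-like property over $\K$ (Lemma~\ref{lem-11}). Statement (2) will then follow from (1) by pushing forward along the surjection $\eme_{\tilde{R}}/\eme_{\tilde{R}}^2 \twoheadrightarrow \eme_{R}/\eme_{R}^2$ of the preceding lemma, and the reverse implication (2)$\Rightarrow$(1) will come by choosing $n$ large enough that the two quotients coincide.

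For (1), given $s \in \eme_{\tilde{R}}$ with $\nu(s) = a$, first decompose $a = \sum \alpha_i a_i$ in $E(\tilde{R})$ and consider the monomial $\prod r_i^{\alpha_i}$, which has valuation $a$ and lies in $\K r_{i_0}$ if $a = a_{i_0}$ is itself a generator, and in $\eme_{\tilde{R}}^2$ otherwise. The monomial-like property supplies $u \in \K^\times$ with $\nu(s - u\prod r_i^{\alpha_i}) > a$ (or zero difference). Iterating strictly increases the valuation; fixing $N$ with $x^N\K[[x]] \subseteq \tilde{R}$, the factorization $x^k = x^N\cdot x^{k-N}$ shows $x^k\K[[x]] \subseteq \eme_{\tilde{R}}^2$ for $k \geq 2N$, so the process terminates modulo $\eme_{\tilde{R}}^2$ and the $\bar{r}_i$ span. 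For linear independence, assume $\sum c_i r_i \in \eme_{\tilde{R}}^2$ with some $c_i \neq 0$; since the $a_i$ are distinct (being a \emph{minimal} generating set), strict non-Archimedean forces $\nu(\sum c_i r_i) = a_{i_0}$ where $i_0$ realizes $\min\{a_i : c_i \neq 0\}$. But valuations of elements of $\eme_{\tilde{R}}^2$ are sums of two nonzero elements of $E(\tilde{R})$, contradicting the minimality of $a_{i_0}$ as a generator.

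For (2), the kernel of $\eme_{\tilde{R}}/\eme_{\tilde{R}}^2 \twoheadrightarrow \eme_{R}/\eme_{R}^2$ is the image of $x^n\K[[x]]$. If $a_i \geq n$ then $r_i \in x^n\K[[x]]$, so $\bar{r}_i$ lies in this kernel. If $a_i < n$ and $r_i = s + t$ with $s \in \eme_{\tilde{R}}^2,\ t \in x^n\K[[x]]$, then $\nu(t) \geq n > a_i$ forces $\nu(s) = a_i$ by the non-Archimedean property, again contradicting minimality. The analogous valuation argument applied to a hypothetical nonzero relation $\sum_{a_i < n} c_i \bar{r}_i = 0$ in $\eme_R/\eme_R^2$ yields linear independence, and spanning follows from (1) by surjectivity. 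The identification of $E_{(n)}$ with the minimal generators of $E(R)$ is immediate, since any decomposition $a = b + c$ in $E(\tilde{R}) \setminus \{0\}$ with $a < n$ automatically has $b,c < n$. Finally (2)$\Rightarrow$(1) is obtained by taking $n \geq 2N$: then $x^n\K[[x]] \subseteq \eme_{\tilde{R}}^2$, so $\eme_R/\eme_R^2 = \eme_{\tilde{R}}/\eme_{\tilde{R}}^2$ and $E_{(n)} = \{a_1, \ldots, a_d\}$.

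The main obstacle is the tight interplay between the strict non-Archimedean property, which pins down the valuation of sums exactly, and the minimality of the $a_i$ in the numerical monoid $E(\tilde{R})$, which forbids these particular valuations from being attained by elements of $\eme_{\tilde{R}}^2$ (nor of $\eme_{\tilde{R}}^2 + x^n\K[[x]]$ when $a_i < n$). Once this observation is isolated, both spanning and linear independence fall out uniformly, and the truncation to level $n$ merely selects which basis vectors survive.
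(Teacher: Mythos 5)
There is a genuine gap: you set out to prove statement (1) \emph{unconditionally} and then derive (2) from it, but (1) is not a theorem --- the proposition is only an equivalence, and both sides can fail. The fatal step is your linear-independence claim that ``valuations of elements of $\eme_{\tilde{R}}^2$ are sums of two nonzero elements of $E(\tilde{R})$.'' An element of $\eme_{\tilde{R}}^2$ is a \emph{sum} of products; each product has valuation a sum of two nonzero exponents, but cancellation among the products can push the valuation of the sum up to a value that is a minimal generator. This is exactly the phenomenon exhibited in Section \ref{sec-4.3}: for $\tilde{R}\subseteq \K[[x]]$ the preimage of the algebra generated by $x^6+x^9,\ x^7,\ x^8$ in $\K[x]/x^{18}$, one has $x^{17}=(x^6+x^9)\cdot x^8-(x^7)^2\in \eme_{\tilde{R}}^2$ even though $17$ is a minimal generator of $E(\tilde{R})$, and there $\dim_{\K}(\eme_{\tilde{R}}/\eme_{\tilde{R}}^2)=3<4=d$. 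Your argument would force $\dim_{\K}(\eme_{\tilde{R}}/\eme_{\tilde{R}}^2)=d(E)$ always, contradicting Proposition \ref{prop-22} (which is only an inequality, deliberately) and this example. The same false principle is reused in your (1)$\Rightarrow$(2) step, where you rule out $r_i\in \eme_{\tilde{R}}^2+x^n\K[[x]]$ for $a_i<n$ by asserting that no element of $\eme_{\tilde{R}}^2$ can have valuation $a_i$.

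What survives: your spanning argument is fine (it is the reduction algorithm of Theorem \ref{thm-10}/Lemma \ref{lem-20}, and it only yields $\dim\leq d$), and your (2)$\Rightarrow$(1) direction is correct and essentially the paper's: choose $n\geq 2N$ with $x^N\K[[x]]\subseteq\tilde{R}$, so $x^n\K[[x]]\subseteq \eme_{\tilde{R}}^2$, hence $\eme_R/\eme_R^2\cong \eme_{\tilde{R}}/\eme_{\tilde{R}}^2$ and $E_{(n)}=\{a_1,\dots,a_d\}$ (indeed a cleaner choice of $n$ than the paper's). For (1)$\Rightarrow$(2) you must actually \emph{assume} (1) and argue via the isomorphism $\eme_R/\eme_R^2\cong \eme_{\tilde{R}}/(\eme_{\tilde{R}}^2+x^n\K[[x]])$: the point to verify is that the image of $x^n\K[[x]]$ in $\eme_{\tilde{R}}/\eme_{\tilde{R}}^2$ is exactly the span of the classes $\bar{r}_i$ with $a_i\geq n$. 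One inclusion is because $\nu(r_i)\geq n$ puts $r_i\in x^n\K[[x]]$; the other follows by running the reduction algorithm on an element of valuation $\geq n$, observing that it only ever subtracts unit multiples of $r_i$ with $a_i\geq n$ or elements of $\eme_{\tilde{R}}^2$. That replaces, and cannot be replaced by, the ``no element of $\eme_{\tilde{R}}^2$ has valuation $a_i$'' shortcut.
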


\begin{proof} 

\begin{enumerate}

\item Let the vector space $\eme_{\tilde{R}}/\eme_{\tilde{R}}^2$ have basis $\{r_1 ,... , r_d\}$ where $\nu(r_i)=a_i$.  Then by Lemma 16, a basis for $\eme_{R}/\eme_{R}^2$ is obtained consisting of those $r_i$ such that $\nu(r_i) < n$, i.e, by $\{r_i \, | a_i\in E_{(n)}\}$.

\item Conversely, taking $n$ equal to one plus the maximum of the $a_i$, we have $x^n\K[[x]]\subseteq \eme_{\tilde{R}}^2$ and hence the isomorphism $\eme_{R}/\eme_{R}^2 \cong \eme_{\tilde{R}}/\eme_{\tilde{R}}^2$ gives the result.

\end{enumerate}

\end{proof}

Theorem \ref{thm-10} and its proof applied to $R=\tilde{R}/x^n$ says:

\begin{lem}\label{lem-20} If $\{a_1, ... , a_d\}$ generate $E(R)$ as partial-monoid. Let $r_1, ... , r_d$ be monic such that $\nu(r_i)=a_i$. Then $r_i$ generate $R$ as algebra. Furthermore, if $n-1\in E(R)$ is not a generator, then $x^{n-1}\in \eme_R^2$, more precisely, $x^{n-1}$ is a nontrivial (not just one factor) monomial $r_1^{\alpha_1}....r_d^{\alpha_d}=x^{n-1}$ where $n-1=\alpha_1a_1+ ... + \alpha_da_d$.
\end{lem}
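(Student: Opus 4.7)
The plan is to deduce the first assertion directly from Theorem~\ref{thm-10} applied to $R=\tilde R/x^n$: on $R$ the valuation $\nu$ takes values in the finite partial-monoid $E(R)\subseteq[\mathbf{n-1}]$, is strict (hence semi-strict), and is monomial-like over $\K$ by the same two-line argument as Lemma~\ref{lem-11} (which never used that we were in $\K[[x]]$ rather than in a truncation). Once $\{a_1,\dots,a_d\}$ is chosen to generate $E(R)$ and monic lifts $r_i$ with $\nu(r_i)=a_i$ are fixed, Theorem~\ref{thm-10} asserts exactly that $r_1,\dots,r_d$ generate $R$ as a $\K$-algebra, so there is nothing more to do for the first part.

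For the refinement about $x^{n-1}$, I would start from a decomposition $n-1=\alpha_1 a_1+\cdots+\alpha_d a_d$ in the partial-monoid $E(R)$, which exists because the $a_i$ generate $E(R)$ and $n-1\in E(R)$. The hypothesis that $n-1$ is not itself a generator immediately rules out $\sum_i\alpha_i=1$ (that would force $n-1=a_j$ for some $j$), so $\sum_i\alpha_i\geq 2$. Next, form the product $\tilde r:=r_1^{\alpha_1}\cdots r_d^{\alpha_d}$. Semi-strictness of $\nu$ guarantees that $\tilde r$ is nonzero and has $\nu(\tilde r)=n-1$; monicness of each $r_i$ (so $r_i=x^{a_i}+(\text{terms of degree}>a_i)$) makes the expansion of $\tilde r$ in $\K[[x]]$ begin with $x^{n-1}$ and continue only with terms of degree $\geq n$, and these last terms die in $R=\tilde R/x^n$. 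Therefore $\tilde r=x^{n-1}$ exactly in $R$; and because each $r_i\in\eme_R$ (the $a_i$ being $\geq 1$) while $\sum_i\alpha_i\geq 2$, the element $\tilde r$, and hence $x^{n-1}$, lies in $\eme_R^2$.

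I do not foresee a serious obstacle. The only delicate point is the ``on the nose'' equality $r_1^{\alpha_1}\cdots r_d^{\alpha_d}=x^{n-1}$ rather than just an equality up to a unit or up to higher valuation: this is really a feature of the truncation $\tilde R/x^n$, in which $n-1$ is the maximal finite value of $\nu$, so there is no room left for a correction term. It is exactly this that lets us identify $x^{n-1}$ itself as the nontrivial monomial in the $r_i$ and place it in $\eme_R^2$.
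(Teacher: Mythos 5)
Your proposal is correct and follows essentially the same route as the paper, which proves this lemma precisely by invoking Theorem~\ref{thm-10} and its proof for $R=\tilde R/x^n$ (with Lemma~\ref{lem-11} supplying the monomial-like property over $\K$). The details you add for the ``furthermore'' part --- writing $n-1=\alpha_1a_1+\cdots+\alpha_da_d$ with $\sum_i\alpha_i\geq 2$ since $n-1$ is not a generator, using semi-strictness to see $r_1^{\alpha_1}\cdots r_d^{\alpha_d}\neq 0$ with valuation $n-1$, and using monicness plus truncation at $x^n$ to get the on-the-nose equality with $x^{n-1}$, hence membership in $\eme_R^2$ --- are exactly the steps implicit in the paper's appeal to the proof of Theorem~\ref{thm-10}.
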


\begin{prop}\label{prop-21} The valuation maps are compatible under injection and projection: Let $R\subset \K[x]/x^n$ be a $\K$-subalgebra.
\begin{itemize}
\item $x^{n-1}\in R \iff n-1\in E(R)$
\item Let $n\geq m$ and consider the projection map $\varphi: \K[x]/x^n\to \K[x]/x^m$ given by annihilating $x^m$. Then the valuation maps are identical where defined: for any nonzero $\bar{r}\in \varphi(R)$, $\nu(\bar{r})=\nu(r)$ for any preimage $r$ of $\bar{r}$.
\item In particular, if the projection gives an isomorphism $R\cong \varphi(R)$, the sets of exponents are identical.
\item In general $E(\varphi(R))=E(R)\cap [0, m-1]$
\item The cardinality of $E(R)$ is $\#(E(R))=\dim_{\K}(R)$.
\end{itemize}
\end{prop}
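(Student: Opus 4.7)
The plan is to dispatch the five bullets in order, each by a short leading-term argument, ultimately all resting on the fact that the coefficient of the lowest nonzero power of $x$ determines $\nu$.

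For the first bullet, the direction $x^{n-1}\in R \Rightarrow n-1\in E(R)$ is immediate since $\nu(x^{n-1})=n-1$. Conversely, if $n-1\in E(R)$ pick $r\in R$ with $\nu(r)=n-1$; in $\K[x]/x^n$ the expression $r=a_{n-1}x^{n-1}+\text{higher order terms}$ has no surviving higher terms, so $r=a_{n-1}x^{n-1}$ with $a_{n-1}\in\K^{\times}$, and $x^{n-1}=a_{n-1}^{-1}r\in R$ because $R$ is a $\K$-subalgebra.

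For the second bullet, let $\bar r\in\varphi(R)$ be nonzero and let $r$ be any preimage in $R$. Since $\bar r\neq 0$, the expansion of $r$ has at least one nonzero coefficient in some degree $<m$; call the smallest such degree $j$. No term of $r$ in degree strictly less than $j$ exists (by choice of $j$), so $\nu(r)=j$. The truncation $\varphi$ kills only the degrees $\geq m$, so the coefficient in degree $j$ is preserved, giving $\nu(\bar r)=j=\nu(r)$. The point to note is that although preimages are not unique, any two differ by an element of the kernel $x^m\K[x]/x^n\cap R$ whose nonzero terms all sit in degrees $\geq m$, hence cannot alter the leading term. The third bullet is then an immediate consequence: an isomorphism $R\cong\varphi(R)$ makes $\varphi|_R$ injective, so nonzero elements of $R$ correspond to nonzero elements of $\varphi(R)$ with matching valuation, and $E(R)=E(\varphi(R))$. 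The fourth bullet follows similarly: bullet two gives $E(\varphi(R))\subseteq E(R)\cap [0,m-1]$ because every $\nu(\bar r)$ lies in $[0,m-1]$, and conversely any $r\in R$ with $\nu(r)=j<m$ projects to a nonzero $\bar r$ of the same valuation by the argument just given.

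For the fifth bullet, for each $i\in E(R)$ choose a monic $r_i\in R$ with $\nu(r_i)=i$; I claim $\{r_i\}_{i\in E(R)}$ is a $\K$-basis of $R$. Linear independence: in any nontrivial $\K$-linear combination $\sum c_i r_i=0$, look at the smallest $i$ with $c_i\neq 0$; the coefficient of $x^i$ in the sum is $c_i$, contradicting vanishing. Spanning: given $r\in R$ nonzero, let $i_0=\nu(r)$ and write $r=a_{i_0}x^{i_0}+\cdots$; then $r-a_{i_0}r_{i_0}$ is either zero or has strictly larger valuation, and we iterate. Since $\nu$ takes only finitely many values in $[0,n-1]$ the process terminates, expressing $r$ as a $\K$-linear combination of the $r_i$. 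Hence $\dim_{\K}(R)=\#E(R)$.

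None of the steps is really a genuine obstacle; the whole proposition is a careful sequence of leading-term bookkeeping. The only point requiring a moment of thought is bullet two, where one must observe that the non-uniqueness of preimages is harmless because the kernel of $\varphi$ is concentrated in sufficiently high degrees. All subsequent bullets then chain from it and from the elementary leading-term linear-algebra argument used in the final one.
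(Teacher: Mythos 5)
Your proof is correct and matches the paper, which simply declares the proposition ``Immediate by definition''; your leading-term bookkeeping (kernel concentrated in degrees $\geq m$, monic representatives $r_i$ forming a basis) is exactly the routine verification being left implicit. No gaps.
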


\begin{proof} Immediate by definition.
\end{proof}

\begin{prop} \label{prop-22} For $E\subseteq [\mathbf{N-1}]$ a sub-partial-monoid, let $d(E)$ be the cardinality of its (unique) minimal generating set. If $R\subseteq \K[x]/x^n$ has $E(R)=E$, then $\dim_{\K}(\eme_R/\eme_R^2)\leq d(E)$.
\end{prop}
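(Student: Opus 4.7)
The plan is to use Lemma \ref{lem-20} to produce an explicit generating set for $\eme_R/\eme_R^2$ of size $d(E)$, whence the dimension bound follows.

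First, let $\{a_1, \ldots, a_d\}$ be the minimal generating set of $E = E(R)$, so $d = d(E)$. Note that each $a_i$ is strictly positive: indeed $0 \in E$ is the identity of the partial-monoid (it is $\nu(1)$), so $0$ cannot be a minimal generator. Apply Lemma \ref{lem-20} to obtain monic elements $r_1, \ldots, r_d \in R$ with $\nu(r_i) = a_i$ such that the $r_i$ generate $R$ as a $\K$-algebra. Since $a_i > 0$, each $r_i$ has no constant term, so $r_i \in \eme_R$.

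Next, I would argue that $\eme_R$ is generated by $r_1, \ldots, r_d$ as an ideal of $R$. Since $R = \K[r_1, \ldots, r_d]$, every element of $R$ can be written as $c + P(r_1, \ldots, r_d)$ with $c \in \K$ and $P$ a polynomial without constant term. The elements of $\eme_R$ are precisely those with $c = 0$ (as $R$ is local with residue field $\K$, and the $r_i$ lie in $\eme_R$), so each element of $\eme_R$ is a $\K$-linear combination of monomials $r_1^{\alpha_1} \cdots r_d^{\alpha_d}$ with $\sum \alpha_i \geq 1$. Taking images in $\eme_R/\eme_R^2$, the monomials of total degree $\geq 2$ vanish, so $\eme_R/\eme_R^2$ is spanned by the classes $\bar r_1, \ldots, \bar r_d$.

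This immediately gives $\dim_\K(\eme_R/\eme_R^2) \leq d = d(E)$, as desired. No real obstacle arises here; the only subtlety worth flagging in the write-up is the observation that the minimal generators of $E$ are automatically positive, which is what allows the $r_i$ to play the role of algebra generators of the maximal ideal rather than merely of $R$.
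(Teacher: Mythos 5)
Your proof is correct and follows essentially the same route as the paper: the key input in both is Lemma \ref{lem-20}, producing monic algebra generators $r_1,\dots,r_d$ of $R$ with $\nu(r_i)=a_i$. You simply replace the paper's appeal to Proposition \ref{prop-19} by the direct (and standard) argument that algebra generators lying in $\eme_R$ have classes spanning $\eme_R/\eme_R^2$, which is a perfectly sound way to finish.
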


\begin{proof} Combine Proposition \ref{prop-19} and Lemma \ref{lem-20}.
\end{proof}


\subsection{Failure of equality in $\dim_{\K}(\eme_R/\eme_R^2)\leq d(E)$}\label{sec-4.3}

Proposition \ref{prop-22} says that $\dim_{\K}(\eme_R/\eme_R^2)\leq d(E)$ and that's the most that one can assert. Here's a family of examples for which the equalities don't hold.
\vspace{2 mm}

The algebra generated by $\{1, a=x^6+x^9, b=x^7, c=x^8\}$ inside $\K[x]/x^{18}$ has elements the powers 
\begin{itemize}
\item $a=x^6+x^9$
\item $b=x^7$
\item $c=x^8$
\item $a^2=x^{12}+2x^{15}$
\item $ab=x^{13}+x^{16}$
\item $b^2=x^{14}$
\item $ac=x^{14}+x^{17}$
\item $bc=x^{15}$
\item $c^2=x^{16}$
\end{itemize}

Hence a linear basis is $\{1, x^6+x^9, x^7, x^8, x^{12}, x^{13}, x^{14}, x^{15}, x^{16}, x^{17}\}$, and $E=\{0, 6, 7, 8, 12, 13, 14, 15, 16, 17\}$ and the generators of $E$ are $\{6, 7, 8, 17\}$ so $17$ is a generator of $E$ while $x^{17} = ac-b^2$ which belongs to $\eme^2$. We have also $\dim_{\K}(\eme_R/\eme_R^2)=3 < 4=d(E)$. This example ($a=6$) can be generalized to an infinite family as follows:

\begin{prop} Let $a\geq 6$, and consider the following subalgebra of $\K[x]/x^n$ where $n=2a+6$, generated by $\{1, a=x^a+x^{a+3}, b=x^{a+1}, c=x^{a+2}\}$. We have:
\begin{itemize}
\item $a=x^a+x^{a+3}$
\item $b=x^{a+1}$
\item $c=x^{a+2}$
\item $a^2=x^{2a}+2x^{2a+3}$
\item $ab=x^{2a+1}+x^{2a+4}$
\item $b^2=x^{2a+2}$
\item $ac=x^{2a+2}+x^{2a+5}$
\item $bc=x^{2a+3}$
\item $c^2=x^{2a+4}$
\end{itemize}

A linear basis is $\{x^a+x^{a+3}, x^{a+1}, x^{a+2}, x^{2a}, x^{2a+1}, x^{2a+2}, x^{2a+3}, x^{2a+4}, x^{2a+5}\}$, and $E=\{0, a, a+1, a+2, 2a, 2a+1, 2a+2, 2a+3, 2a+4, 2a+5\}$ and the generators of $E$ are $\{a, a+1, a+2, 2a+5\}$ so $2a+5$ is a generator of $E$ while $x^{2a+5} = ac-b^2$ which belongs to $\eme^2$. Further, $\dim_{\K}(\eme_R/\eme_R^2)=3 < 4=d(E)$.
\end{prop}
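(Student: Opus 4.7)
The plan is to proceed in four stages: compute all relevant products, assemble a basis from them, read off the exponent set and its generators, and finally compare with $\eme_R/\eme_R^2$.

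First I would observe that, because each of $a, b, c$ has valuation at least $a \geq 6$, any triple product has valuation at least $3a \geq 2a + 6 = n$, and therefore vanishes in $\K[x]/x^n$. Consequently $R$ is spanned, as a $\K$-module, by $1$ together with the elements $a, b, c$ and the six pairwise products $a^2, ab, ac, b^2, bc, c^2$. Each of these products is a short, explicit calculation, using $x^{2a+6} = 0$; this yields exactly the nine expressions listed in the statement (e.g.\ $a^2 = x^{2a} + 2x^{2a+3}$, since the $x^{2a+6}$ cross-term dies, and $ac = x^{2a+2} + x^{2a+5}$).

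Second, from these nine expressions I would peel off pure monomials by triangulating on the valuation: $b, c, b^2, bc, c^2$ are already monomials in $x$; from $a^2 - 2bc$ one gets $x^{2a}$; from $ab - c^2$ one gets $x^{2a+1}$; and crucially $ac - b^2 = x^{2a+5}$. This recovers all claimed monomials except $x^a$, for which only the combination $x^a + x^{a+3}$ lies in $R$. The resulting ten elements $\{1, x^a+x^{a+3}, x^{a+1}, x^{a+2}, x^{2a}, x^{2a+1}, x^{2a+2}, x^{2a+3}, x^{2a+4}, x^{2a+5}\}$ have pairwise distinct valuations, so they are $\K$-linearly independent, and by the previous paragraph they span $R$. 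Reading off valuations via Proposition \ref{prop-21} gives $E(R) = \{0, a, a+1, a+2, 2a, 2a+1, 2a+2, 2a+3, 2a+4, 2a+5\}$.

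Third, I would determine the minimal generators of $E = E(R)$ as a partial-monoid. The values $a, a+1, a+2$ are clearly minimal because no element of $E$ lies strictly between $0$ and $a$. For $k \in \{0,1,2,3,4\}$ the element $2a+k$ decomposes as a sum of two elements from $\{a, a+1, a+2\}$ (by choosing two summands with indices summing to $k$). However, $2a + 5$ cannot be so written: any decomposition into two nonzero elements of $E$ would force both summands to lie in $\{a, a+1, a+2\}$ (since the remaining nonzero values of $E$ are all $\geq 2a$), and the maximum such sum is $2a + 4$. Hence the minimal generating set is $\{a, a+1, a+2, 2a+5\}$ and $d(E) = 4$.

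Finally, I would bound $\eme_R^2$ by valuation. Every element of $\eme_R$ has valuation at least $a$, so every element of $\eme_R^2$ has valuation at least $2a$; in particular none of $a, b, c$ lies in $\eme_R^2$. On the other hand, the identity $x^{2a+5} = ac - b^2$ from the second step places $x^{2a+5}$ inside $\eme_R^2$, and the six pairwise products $a^2, ab, ac, b^2, bc, c^2$ are in $\eme_R^2$ by definition. Thus modulo $\eme_R^2$ the maximal ideal $\eme_R$ is spanned by the images of $a, b, c$, which remain linearly independent because their valuations $a, a+1, a+2$ are all strictly less than $2a$. Therefore $\dim_\K(\eme_R/\eme_R^2) = 3 < 4 = d(E)$, as claimed. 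The only delicate point in the whole argument is the monoid-generation analysis of $2a+5$; everything else is bookkeeping on valuations, enabled by the hypothesis $a \geq 6$ that kills all triple products.
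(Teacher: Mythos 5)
Your proposal is correct and takes essentially the same route as the paper: the only substantive point, there as here, is that $a\geq 6$ forces any sum of three nonzero exponents to be at least $3a > 2a+5$ and confines two-element sums to $\{2a,\dots,2a+4\}$, so $2a+5$ is a generator of $E$ while $x^{2a+5}=ac-b^2\in\eme_R^2$. The remaining steps (product computations, triangulating to the monomial basis, and the valuation bound giving $\dim_{\K}(\eme_R/\eme_R^2)=3$) are the same bookkeeping the paper carries in the statement itself.
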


\begin{proof} The only need to check is the assertion regarding $E(R)$. But this follows from the assumption $a\geq 6$ which guarantees that the sum of any three nonzero elements of $E$ is larger than $2a+5$ (in fact, the minimum of the sum of any three nonzero elements is $3a > 2a+5$) and the sum of two nonzero elements is an element of the set $\{2a, 2a+1, 2a+2, 2a+3, 2a+4\}$, so indeed $2a+5$ is not the sum of two nonzero elements, so it's a generator.
\end{proof}


\subsection{Subalgebras of given shape and counting}

\begin{lem} The extension $\varphi: \K[x]/x^{n+1}\to \K[x]/x^{n}$ is minimal. Furthermore, for any $R$ a subalgebra of $\K[x]/x^{n+1}$ containing $x^n$, the extension $R\to R/x^n$ is minimal.
\end{lem}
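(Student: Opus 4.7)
The plan is to verify the two defining conditions for a minimal extension (surjectivity, and kernel being a minimal nonzero ideal) directly, using Lemma \ref{lem-2} to identify minimal ideals cheaply.

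For the first assertion, the map $\varphi : \K[x]/x^{n+1} \to \K[x]/x^{n}$ is the obvious quotient-by-$x^n$ map, so it is manifestly surjective, and both rings are local with maximal ideal $(x)$ and residue field $\K$. The kernel is $I = x^n \cdot \K[x]/x^{n+1} = \K\cdot x^n$, a one-dimensional $\K$-vector space. Since $x \cdot x^n = x^{n+1} = 0$ in $\K[x]/x^{n+1}$, we have $\eme_R \cdot I = (x)\cdot \K x^n = 0$. Thus $I$ satisfies the hypotheses of Lemma \ref{lem-2} (annihilated by $\eme_R$ and one-dimensional over $R/\eme_R = \K$), hence is a minimal ideal, so $\varphi$ is a minimal extension.

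For the second assertion, let $R \subseteq \K[x]/x^{n+1}$ be a $\K$-subalgebra containing $x^n$. First I would check that $R$ is local: since $\K \subseteq R$, the composite $R \hookrightarrow \K[x]/x^{n+1} \twoheadrightarrow \K$ is surjective onto the residue field of $\K[x]/x^{n+1}$, so by Proposition \ref{prop-3} the ring $R$ is local with $\eme_R = (x)\cap R$. The assumption $x^n \in R$ means that $\K x^n$ is an ideal of $R$ (it is already an ideal of the ambient ring and lies in $R$), and it is precisely the kernel of the restricted map $\varphi\vert_R : R \to R/\K x^n$. The quotient $R/\K x^n$ is naturally identified with the image of $R$ in $\K[x]/x^n$, and the map is surjective onto this image by construction.

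It remains to see that $\K x^n$ is a minimal ideal of $R$. It is one-dimensional over $\K = R/\eme_R$, and $\eme_R \cdot \K x^n \subseteq (x)\cdot \K x^n = 0$ inside $\K[x]/x^{n+1}$. Lemma \ref{lem-2} then gives that $\K x^n$ is a minimal ideal of $R$, so $R \to R/x^n$ is a minimal extension. The only subtle point is ensuring we are justified in speaking of $\eme_R$ before invoking Lemma \ref{lem-2}, which is handled by the localness argument using Proposition \ref{prop-3}; everything else is a direct computation.
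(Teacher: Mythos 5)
Your argument is correct, and for the first assertion it is essentially the paper's: the kernel $\K x^n$ is killed by $(x)$ and is one-dimensional over the residue field, hence a minimal ideal by Lemma \ref{lem-2} (the paper just records that $x^n\K[x]/x^{n+1}$ is the unique minimal ideal). For the second assertion you take a genuinely more hands-on route. The paper disposes of it in one line by the restriction machinery (Proposition \ref{prop-7}, resting on Proposition \ref{prop-3}): since $R\supseteq \K x^n=Ker(\varphi)$ one has $R=\varphi^{-1}(\varphi(R))$, and the composite $\varphi(R)\subseteq \K[x]/x^n\to \K$ is onto because $\K\subseteq \varphi(R)$, so the restricted map is again a minimal extension. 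You instead verify directly, via Lemma \ref{lem-2}, that $\K x^n$ is a minimal ideal of $R$ and is exactly the kernel of $R\to R/(x^n)$ (noting $R\cdot x^n=\K x^n$ since $x\cdot x^n=0$); this is more self-contained and avoids invoking the restriction results at all.

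The one step to repair is your justification that $R$ is local. Proposition \ref{prop-3} only transfers localness between $T$ and $\bar{T}=\varphi^{-1}(T)$, and its conditions 1--4 already presuppose that both rings are local; it does not assert that a subring surjecting onto the residue field is local, so as cited it does not give you $\eme_R=(x)\cap R$. The fact you need is true and easy, so argue it directly (as in the proof of Theorem \ref{thm-4}): the composite $\K\to R\to R/((x)\cap R)\hookrightarrow \K$ is the identity, so $R=\K\oplus((x)\cap R)$; every element of $(x)\cap R$ is nilpotent, and any $r=c+m$ with $c\in\K^{\times}$ and $m$ nilpotent has inverse $c^{-1}\sum_{k}(-c^{-1}m)^{k}\in R$, so $(x)\cap R$ is the unique maximal ideal of $R$. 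With that substitution your proof is complete and correct.
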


\begin{proof} In fact $x^n\K[x]/x^{n+1}$ is the unique minimal ideal of $\K[x]/x^{n+1}$. The second follows as well (by applying restrictions of minimal extensions, Proposition \ref{prop-7}).
\end{proof}

Here's the main use of our results on minimal extensions:

\begin{thm}\label{thm-25} Let $R\subseteq \K[x]/x^{n+1}$ subalgebra.

\begin{enumerate}
\item If $x^{n}\in \eme_R^2$, there are no subalgebras mapping isomorphically onto $\varphi(R)=S$.
\item Otherwise, the set of such subalgebras is parametrized by an affine space of dimension $\dim_{\K}(\eme_S/\eme_S^2)$.
\end{enumerate}
\end{thm}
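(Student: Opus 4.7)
The plan is to apply Theorem \ref{main-thm} part 1 directly, using the preceding lemma to verify we are in the minimal extension setting. First I would observe that since $R$ is a $\K$-subalgebra of $\K[x]/x^{n+1}$, it is automatically a finite-dimensional local $\K$-algebra with coefficient field $\K$: its maximal ideal is $\eme_R = R \cap (x)$, and the composition $\K \hookrightarrow R \twoheadrightarrow R/\eme_R$ is an isomorphism. The same is true of $S = \varphi(R) \subseteq \K[x]/x^n$.

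Next I would handle the hypothesis $x^n \in R$ implicit in the statement (the condition $x^n \in \eme_R^2$ already presupposes $x^n \in R$; if $x^n \notin R$, then $\varphi|_R$ is an isomorphism and the statement is vacuous in the nontrivial direction). Assuming $x^n \in R$, the preceding lemma applies and $\varphi|_R : R \to S$ is a minimal extension. Its kernel is $R \cap \K x^n = \K x^n$, which is one-dimensional over $\K$ and generated by the element $x^n$.

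With the setup in place, I would apply Theorem \ref{main-thm} part 1 to the minimal extension $\varphi|_R : R \to S$ with common coefficient field $\K$. The condition $\operatorname{Ker}(\varphi|_R) \subseteq \eme_R^2$ translates, since the kernel is spanned by $x^n$, precisely to $x^n \in \eme_R^2$. In that case the theorem yields no $\K$-subalgebras mapping isomorphically onto $S$; otherwise the set of such subalgebras is naturally an affine space over $\K$ of dimension $\dim_\K(\eme_S/\eme_S^2)$, which is exactly the desired statement.

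There is no real technical obstacle here, since the substantive work has been done in Theorem \ref{main-thm}; the only thing one has to be a bit careful about is the translation of the abstract hypothesis on $\operatorname{Ker}(\varphi)$ into the concrete condition on $x^n$, and confirming that the minimal extension property for the restriction is granted by the preceding lemma (which itself relied on Proposition \ref{prop-7}).
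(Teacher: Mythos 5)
Your proposal is correct and follows essentially the same route as the paper: both reduce the statement to Theorem \ref{main-thm} part 1, using the preceding lemma (via Proposition \ref{prop-7}) to see that the restriction $R\to S$ is a minimal extension with coefficient field $\K$ and kernel spanned by $x^n$. Your version merely makes explicit the locality of $R$ and the case $x^n\notin R$, which the paper leaves implicit.
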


\begin{proof} This is Theorem \ref{main-thm} part 1, since all the subalgebras involved have coefficient field $\K$.
\end{proof}

\begin{cor} \label{cor-26} Let $\K=\F_q$ a finite field of $q$ elements. Let $R\subseteq \K[x]/x^{n+1}$ subalgebra, and $n\in E(R)$.
\begin{enumerate}
\item If $x^{n}\in \eme_R^2$, there are no subalgebras mapping isomorphically onto $\varphi(R)=S$.
\item Otherwise, the number of such subalgebras is $q^{d(S)}$, where $d(S)=\dim_{\K}(\eme_S/\eme_S^2)$. 
\end{enumerate}
\end{cor}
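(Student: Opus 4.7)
The plan is to derive this corollary as an immediate specialization of Theorem \ref{thm-25} to the case of a finite residue field, once the hypotheses of that theorem are verified from the assumption $n \in E(R)$.

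First I would unpack the hypothesis $n\in E(R)$. By definition, there is some element of $R$ of valuation exactly $n$, and in $\K[x]/x^{n+1}$ the only nonzero elements of valuation $n$ are scalar multiples $ax^n$ with $a\in \K^{\times}$ (there are no higher-order terms available). Multiplying by $a^{-1}$ shows $x^n\in R$. In particular $R$ contains the unique minimal ideal $x^n\K[x]/x^{n+1}$ of $\K[x]/x^{n+1}$, so by the lemma preceding Theorem \ref{thm-25} the restricted projection $\varphi: R \to R/(x^n)=S$ is a minimal extension of local $\K$-algebras, with all relevant subalgebras sharing coefficient field $\K=\F_q$.

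With this in place, Theorem \ref{thm-25} applies verbatim. Part (1) of the corollary is precisely part (1) of Theorem \ref{thm-25}: if $x^n\in \eme_R^2$ then $\mathrm{Ker}(\varphi)=\K\cdot x^n\subseteq \eme_R^2$, so by Theorem \ref{main-thm}(1) no proper subalgebra can map isomorphically onto $S$. For part (2), Theorem \ref{thm-25}(2) asserts that when $x^n\notin \eme_R^2$, the set of subalgebras of $R$ mapping isomorphically to $S$ is parametrized by an affine space over $\K$ of dimension $d(S)=\dim_{\K}(\eme_S/\eme_S^2)$. It only remains to count points: an affine space of dimension $d$ over $\F_q$ has exactly $q^d$ points, yielding the desired count $q^{d(S)}$.

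There is no real obstacle here; the entire content has already been established in Theorem \ref{main-thm} and Theorem \ref{thm-25}, and the corollary is essentially a translation from ``affine space over $\F_q$'' to ``$q^{d(S)}$ points.'' The only small verification worth stating explicitly is the passage from $n\in E(R)$ to $x^n\in R$, which ensures we are genuinely in the minimal-extension regime required to invoke Theorem \ref{thm-25}.
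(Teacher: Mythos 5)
Your proposal is correct and follows the paper's own route: the paper likewise deduces the corollary immediately from Theorem \ref{thm-25} together with the equivalence $n\in E(R) \iff x^n\in R$ (Proposition \ref{prop-21}), which you verify directly, and then counts the $q^{d(S)}$ points of the affine space over $\F_q$.
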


\begin{proof} Immediate from Theorem \ref{thm-25}, and Proposition \ref{prop-21} ($n\in E(R)$ iff $x^{n}\in R$).
\end{proof}

\begin{cor}\label{cor-27} Suppose that $A\subseteq \F_q[x]/x^{n+1}$ and that $n\notin E(A)$. Then the number of subalgebras of $\F_q[x]/x^{n+1}$ mapping isomorphically onto $\varphi(A)$ is $q^{d(A)}$ where $d(A)\leq d(E(A))$.
\end{cor}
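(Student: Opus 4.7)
The plan is to reduce the statement to Corollary \ref{cor-26} by enlarging $A$ so that it contains $x^n$. Concretely, I would set $R := A + \F_q\cdot x^n \subseteq \F_q[x]/x^{n+1}$. The first step is to check that $R$ is actually a subalgebra: writing any element of $R$ as $a + c\, x^n$ with $a \in A$, $c \in \F_q$, and using $x^{2n}=0$ in $\F_q[x]/x^{n+1}$ together with the fact that $x^n\cdot a$ depends only on the constant term of $a$, the product $(a_1+c_1 x^n)(a_2+c_2 x^n)$ lies back in $A + \F_q\cdot x^n$. By construction $n \in E(R)$, so the preceding lemma gives that $\varphi|_R\colon R \to R/(x^n) = \varphi(R)$ is a minimal extension.

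The second step is to identify $\varphi(R)$ and the preimage $\varphi^{-1}(\varphi(R))$. Since $\ker\varphi = \F_q\cdot x^n \subseteq R$, we have $\varphi(R) = \varphi(A) =: S$, and
$\varphi^{-1}(S) = R + \ker\varphi = R.$
Hence any subalgebra $B \subseteq \F_q[x]/x^{n+1}$ with $\varphi(B) = S$ automatically satisfies $B \subseteq R$. This means the subalgebras of $\F_q[x]/x^{n+1}$ mapping isomorphically onto $\varphi(A) = S$ are exactly the subalgebras of $R$ with that property, and Corollary \ref{cor-26} applies to $R$ directly.

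The third step is to exclude the exceptional case. The hypothesis $n \notin E(A)$ means $A \cap \ker\varphi = 0$, so $A$ itself is one such subalgebra; hence we must be in case (2) of Corollary \ref{cor-26}, giving exactly $q^{d(S)}$ subalgebras. Finally, since $\varphi|_A\colon A \to S$ is a ring isomorphism, it induces $\eme_A/\eme_A^2 \cong \eme_S/\eme_S^2$, and therefore $d(A) = d(S)$, yielding the count $q^{d(A)}$. The last inequality $d(A) \leq d(E(A))$ is then Proposition \ref{prop-22} applied to $A$ (viewed via its isomorphic image $S \subseteq \F_q[x]/x^n$, whose set of exponents equals $E(A)$ by Proposition \ref{prop-21}).

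There is no serious obstacle here; the only care needed is the two quick verifications that $R = A + \F_q x^n$ is closed under multiplication and that $\varphi^{-1}(S) = R$, after which the result follows mechanically by invoking Corollary \ref{cor-26} and Proposition \ref{prop-22}.
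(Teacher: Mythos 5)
Your proposal is correct and is essentially the paper's own argument: your $R=A+\F_q\,x^n$ is exactly the paper's $R=\varphi^{-1}(\varphi(A))$, and both proofs reduce to Corollary \ref{cor-26} by noting that $n\notin E(A)$ forces $A\cap\ker\varphi=0$, so $A\to S$ is an isomorphism, ruling out case (1), with $d(A)=d(S)$ and $d(A)\leq d(E(A))$ from Proposition \ref{prop-22}. The only difference is that you spell out the closure of $A+\F_q x^n$ under multiplication and the identity $\varphi^{-1}(S)=R$, which the paper leaves implicit.
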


\begin{proof} Let $S=\varphi(A), R=\varphi^{-1}(A)$. Since $n\notin E(A)$, the map $A\to S$ is an isomorphism and we're in situation $2$ of Corollary \ref{cor-26}, and so the number of such subalgebras is $d(S)=\dim_{\F_q}(\eme_S/\eme_S^2)=\dim_{\F_q}(\eme_A/\eme_A^2)=d(A)$ and $d(A)\leq d(E(A))$.
\end{proof}


\subsubsection{Shape}

\begin{definition}  Let $E\subseteq [\mathbf{n-1}]$ a sub-partial-monoid. The collection of subalgebras of $\K[x]/x^n$ of shape $E$ is the set $\mathcal{S}_n(E)=\{ R\subseteq \K[x]/x^{n} \, \mid E(R)=E\}$. This is a partition of the set of subalgebras of $\K[x]/x^n$.
\end{definition}

\begin{prop}\label{prop-28} Let $E\subseteq [\mathbf{n}]$ a sub-partial-monoid, $R\subseteq \K[x]/x^{n+1}$ subalgebra, $E(R)=E$.
\begin{enumerate}
\item If $n\in E$, the mapping $R\mapsto R/x^{n}$ induces a bijection of sets  $\mathcal{S}_{n+1}(E)\mapsto  \mathcal{S}_{n}(E\setminus \{n\})$
\item If $n\notin E$, the mapping $R\mapsto R/((x^n)\cap R)\cong R$, induces a mapping $\mathcal{S}_{n+1}(E)\mapsto  \mathcal{S}_{n}(E)$. Moreover, for $E_1\neq E_2$ sub-partial-monoids of $[\mathbf{n}]$ such that $n\notin E_1$, $n\notin E_2$, the images of $\mathcal{S}_{n+1}(E_1)$ and 
$\mathcal{S}_{n+1}(E_2)$ are disjoint, under the mapping just described.
\item If $n\notin E$, the mapping above $\mathcal{S}_{n+1}(E)\mapsto  \mathcal{S}_{n}(E)$ is surjective $\iff$ for all $B\in \mathcal{S}_{n}(E)$ the ring $R=\varphi^{-1}(B)$ possesses a subring $A$ mapping isomorphically to $B$ $\iff$ the kernel $Ker(R\to B)$ doesn't lie in $\eme_R^2$ $\iff$ $d(R)=d(B)+1$.

\end{enumerate}
\end{prop}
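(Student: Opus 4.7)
My strategy is to control how a subalgebra $R \in \mathcal{S}_{n+1}(E)$ interacts with the one-dimensional kernel $Ker(\varphi) = \K x^n$ of the projection $\varphi : \K[x]/x^{n+1} \to \K[x]/x^n$. Proposition \ref{prop-21} gives the key dichotomy $n \in E(R) \iff x^n \in R$, which is exactly the distinction between parts (1) and (2).

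For part (1), if $n \in E$ then $Ker(\varphi) \subseteq R$, so $R/((x^n)\cap R) = R/x^n = \varphi(R)$, and Proposition \ref{prop-21} yields $E(\varphi(R)) = E \setminus \{n\}$. Injectivity follows since any such $R$ equals $\varphi^{-1}(\varphi(R))$; for surjectivity I would take $S \in \mathcal{S}_n(E \setminus \{n\})$ and check that $\varphi^{-1}(S)$ is a $\K$-subalgebra of $\K[x]/x^{n+1}$ containing $x^n$ with exponent set $E$. For part (2), $n \notin E$ forces $R \cap Ker(\varphi) = 0$ (since $Ker(\varphi)$ is one-dimensional and spanned by $x^n$), so $\varphi|_R$ is injective and preservation of valuations gives $E(\varphi(R)) = E(R) = E$. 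The disjointness claim is then immediate: the image of $\mathcal{S}_{n+1}(E_i)$ lies inside $\mathcal{S}_n(E_i)$, and the family $\{\mathcal{S}_n(E)\}_E$ partitions the subalgebras of $\K[x]/x^n$.

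Part (3) is the main content and pulls in the minimal extension theory. Surjectivity of $\mathcal{S}_{n+1}(E) \to \mathcal{S}_n(E)$ means that for every $B \in \mathcal{S}_n(E)$ there is some $A \in \mathcal{S}_{n+1}(E)$ with $\varphi(A) = B$; by part (2), such an $A$ is precisely a $\K$-subalgebra of $R := \varphi^{-1}(B)$ mapping isomorphically onto $B$ under $\varphi$. Since $R$ contains $Ker(\varphi)$, Proposition \ref{prop-7} makes $\varphi : R \to B$ itself a minimal extension of $\K$-algebras (all three share the coefficient field $\K$ and $B$ is local). I would then invoke Proposition \ref{prop-8} on this restricted extension, applied $B$ by $B$, to produce the chain: existence of $A$ inside $R$ $\iff$ $Ker(R \to B) \not\subseteq \eme_R^2$ $\iff$ $d(R) = d(B)+1$. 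The only subtlety worth flagging is that each of the three conditions in (3) is implicitly universally quantified over $B \in \mathcal{S}_n(E)$, so the equivalences produced by Proposition \ref{prop-8} at each individual $B$ must be assembled into the global statement.
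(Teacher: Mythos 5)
Your proposal is correct and follows essentially the same route as the paper: part (1) via the inverse map $B\mapsto \varphi^{-1}(B)$, part (2) via the valuation-preservation statements of Proposition \ref{prop-21} together with the fact that the shapes partition the subalgebras, and part (3) by restricting the minimal extension to $\varphi^{-1}(B)\to B$ (Proposition \ref{prop-7}) and applying Proposition \ref{prop-8} for each $B$. Your explicit remark that the equivalences must be assembled over all $B\in\mathcal{S}_n(E)$ is a fair point of care but matches the paper's intent.
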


\begin{proof} 
\begin{enumerate}
\item By Proposition \ref{prop-21}, the mapping is well defined, and so is the inverse mapping $B\mapsto \varphi^{-1}(B)$, $B\subseteq \K[x]/x^n$. It's immediate to see that the composition in both ways yields the identity on the sets $\mathcal{S}$.
\item By Proposition \ref{prop-21}, for $n\notin E$, the the mapping $R\mapsto R/((x^n)\cap R)\cong R$ induces an equality of sets $E(R)=E(R/((x^n)\cap R))$, hence both claims follow. 
\item The last two equivalences are the content of Proposition \ref{prop-8}. For the first one, to have a surjective map $\mathcal{S}_{n+1}(E)\mapsto  \mathcal{S}_{n}(E)$ amounts to have that $R$ is not the only ring mapping to $B$, which by theory of minimal extensions (Theorem \ref{main-thm}), is the same as saying $R$ has a subring mapping isomorphically onto $B$.
\end{enumerate}
\end{proof}

In the setting of a finite field $\K=\F_q$, we can make an estimate of the number of subalgebras. For $E\subseteq [\mathbf{n-1}]$ a sub-partial-monoid, let $e(E)$ be the following quantity recursively defined, with $d(E)$ being the minimal number of generators (so $d(\{0\})=0$)

  \[ e_{n}(E) = \begin{cases}
              0 &\text{if~} n=1 \\
               e_{n-1}(E\setminus \{n-1\})     & \text{if~} n-1\in E, n > 1\\
               d(E)+e_{n-1}(E)     & \text{if~} n-1\notin E, n > 1\\
            \end{cases} \]

\begin{thm} Let $E\subseteq [\mathbf{n-1}]$. Then $\#(\mathcal{S}_{n}(E))$ is at most $q^{e_{n}(E)}$.
\end{thm}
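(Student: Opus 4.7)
The plan is to proceed by induction on $n$, using the projection $\varphi: \K[x]/x^n \to \K[x]/x^{n-1}$ together with Proposition \ref{prop-28} to reduce to smaller $n$, and matching the two cases of the recursion directly to the two cases of that proposition.

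The base case $n=1$ is immediate: $\K[x]/x = \K$ has a unique subalgebra (itself) of shape $\{0\}$, so $\#\mathcal{S}_1(\{0\}) = 1 = q^0 = q^{e_1(\{0\})}$.

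For the inductive step, assume the bound for $n-1$ and fix $E \subseteq [\mathbf{n-1}]$. If $n-1 \in E$, Proposition \ref{prop-28}(1) gives a bijection $\mathcal{S}_n(E) \to \mathcal{S}_{n-1}(E \setminus \{n-1\})$, so by the induction hypothesis and the recursion,
\[
\#\mathcal{S}_n(E) \;=\; \#\mathcal{S}_{n-1}(E \setminus \{n-1\}) \;\leq\; q^{\,e_{n-1}(E \setminus \{n-1\})} \;=\; q^{e_n(E)}.
\]
If instead $n-1 \notin E$, Proposition \ref{prop-28}(2) furnishes a map $\pi: \mathcal{S}_n(E) \to \mathcal{S}_{n-1}(E)$, which is generally not injective; the heart of the argument is to bound its fibers uniformly. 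For any $B \in \mathcal{S}_{n-1}(E)$ and any $A \in \pi^{-1}(B)$, the hypothesis $n-1 \notin E(A) = E$ forces $x^{n-1} \notin A$; since $\mathrm{Ker}(\varphi) = \K x^{n-1}$, the restriction $\varphi|_A$ is injective and $A$ maps isomorphically onto $B$. Corollary \ref{cor-27} then bounds the number of such $A$ by $q^{d(B)}$, and Proposition \ref{prop-22} supplies $d(B) \leq d(E(B)) = d(E)$. Combining,
\[
\#\mathcal{S}_n(E) \;\leq\; q^{d(E)} \cdot \#\mathcal{S}_{n-1}(E) \;\leq\; q^{d(E) + e_{n-1}(E)} \;=\; q^{e_n(E)},
\]
which closes the induction.

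The main obstacle is the uniform-in-$B$ bound on the fibers in the second case: Corollary \ref{cor-27} gives the exact count $q^{d(B)}$, but this depends on the individual subalgebra $B$ rather than only on $E$; Proposition \ref{prop-22} is exactly the tool needed to replace $d(B)$ by $d(E)$ so that the induction can close cleanly. Note also that this is where the inequality (rather than equality) in the theorem enters, explaining the examples from Section \ref{sec-4.3} showing the bound is not always tight.
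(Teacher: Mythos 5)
Your proof is correct and follows essentially the same route as the paper: induction on $n$, with the case $n-1\in E$ handled by the bijection of Proposition \ref{prop-28}(1) and the case $n-1\notin E$ handled by bounding the fibers of the map to $\mathcal{S}_{n-1}(E)$ via Corollary \ref{cor-27} together with the bound $d(B)\leq d(E)$ from Proposition \ref{prop-22}. The only differences are cosmetic (indexing the step as $n-1\to n$ rather than $n\to n+1$, and spelling out why each $A$ in a fiber maps isomorphically onto $B$).
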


\begin{proof} By induction. For $n=1$, the only such sub-partial-monoid is $E=\{0\}$ and $\mathcal{S}_{1}(E)=\{\F_q\}$ and obviously $\#(\mathcal{S}_{1}(E))=1=q^0=e_{1}(E)$.

Assume for $n$. And let consider $R\subseteq \F_q[x]/x^{n+1}$. Then $x^{n}\in R$ iff $n\in E(R)$.

\begin{itemize} 
\item $n\in E(R)$. By Proposition \ref{prop-28} part1, $\#(\mathcal{S}_{n+1}(E)) = \#(\mathcal{S}_{n}(E\setminus \{n\}))$ which is at most $q^{e_{n}(E\setminus \{n\})=e_{n+1}(E)}$.
\item $n\notin E(R)$. By Proposition \ref{prop-28} part 2, there are only two sets $F= F_1, F_2$ such that $\mathcal{S}_{n+1}(F)$ maps to $\mathcal{S}_{n}(E)$, namely, $F_1=E, F_2=E\cup\{n\}$. For each $B\in \mathcal{S}_{n}(E)$, the rings $A\in \mathcal{S}_{n+1}(E)$ mapping to $B$ give isomorphisms $A\cong B$, and every such $A$ is contained in $\varphi^{-1}(B)$, and by Corollary \ref{cor-27} there are $q^{d(B)}\leq q^{d(E)}$ of them, so the fiber has at most $q^{d(E)}$ elements, hence $\#(\mathcal{S}_{n+1}(E))\leq q^{d(E)}\#(\mathcal{S}_{n}(E))$ which by induction is $\leq q^{d(E)}q^{e_{n}(E)}=q^{d(E)+e_{n}(E)}=q^{e_{n+1}(E)}$ by definition.
\end{itemize}
\end{proof}

There's a way to make these estimates more precise with a finer partition of the set of algebras, which won't be pursued here. However, as a corollary of the proof, we have equalities in the following cases:

\begin{prop} For $E\subseteq [\mathbf{n}]$ sub-partial-monoid:
\begin{itemize}
\item If $n\in E$, $\#(\mathcal{S}_{n+1}(E)) = \#(\mathcal{S}_{n}(E\setminus \{n\}))$
\item If $n\notin E$, $\#(\mathcal{S}_{n+1}(E))\leq q^{d(E)}\#(\mathcal{S}_{n}(E))$ and furthermore, if for all $B\in \mathcal{S}_{n}(E)$, $d(\varphi^{-1}(B))=d(B)+1$, then $\#(\mathcal{S}_{n+1}(E))= q^{d(E)}\#(\mathcal{S}_{n}(E))$
\end{itemize}
\end{prop}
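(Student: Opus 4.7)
The plan is to split the proposition into its two bullets, and in each case combine Proposition~\ref{prop-28} with the fiber count from Corollary~\ref{cor-27} and the existence criterion from Proposition~\ref{prop-8}, following the strategy already used in the proof of the preceding theorem.

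For the first bullet ($n\in E$), I would simply apply Proposition~\ref{prop-28}(1), which produces a bijection $R\mapsto R/x^{n}$ between $\mathcal{S}_{n+1}(E)$ and $\mathcal{S}_{n}(E\setminus\{n\})$; taking cardinalities gives the equality immediately.

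For the second bullet ($n\notin E$), I would analyze the map $\pi:\mathcal{S}_{n+1}(E)\to\mathcal{S}_{n}(E)$ furnished by Proposition~\ref{prop-28}(2). For fixed $B\in\mathcal{S}_{n}(E)$ set $R:=\varphi^{-1}(B)\subseteq \K[x]/x^{n+1}$. Since $n\notin E$ forces any $A\in \pi^{-1}(B)$ to inject into $B$ under $\varphi$, the fiber $\pi^{-1}(B)$ is precisely the set of subalgebras of $R$ mapping isomorphically onto $B$. Corollary~\ref{cor-27} then counts $\pi^{-1}(B)$ as having $q^{d(B)}$ elements when nonempty and none otherwise. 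Using $d(B)\leq d(E)$ from Proposition~\ref{prop-22} and summing over $B$ yields $\#(\mathcal{S}_{n+1}(E))\leq q^{d(E)}\#(\mathcal{S}_{n}(E))$.

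For the equality under the stated hypothesis, I would apply Proposition~\ref{prop-8}: the condition $d(\varphi^{-1}(B))=d(B)+1$ is equivalent to $\mathrm{Ker}(\varphi)\not\subseteq \eme_R^2$, which by Theorem~\ref{thm-25} is exactly the condition that $\pi^{-1}(B)$ be nonempty. If this holds for every $B\in\mathcal{S}_{n}(E)$, each fiber is populated to its maximal capacity of $q^{d(B)}$, and summation saturates the previous inequality into the claimed equality.

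The main obstacle is the clean identification of $\pi^{-1}(B)$ as the set of isomorphic copies (so that Corollary~\ref{cor-27} applies uniformly) together with the translation between the algebraic invariant $d(R)=d(B)+1$ and the geometric nonemptiness of the fiber via Proposition~\ref{prop-8}; once these are in place, the proof reduces to bookkeeping.
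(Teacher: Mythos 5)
Your handling of the first bullet and of the inequality in the second bullet is correct and is the same route the paper takes: the bijection of Proposition~\ref{prop-28} part 1 gives the first equality, and the fiber analysis of $\pi:\mathcal{S}_{n+1}(E)\to\mathcal{S}_{n}(E)$ via Corollary~\ref{cor-26}/\ref{cor-27} together with $d(B)\leq d(E)$ (Proposition~\ref{prop-22}) is exactly the argument of the preceding counting theorem, which the paper simply invokes for the inequality.

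The gap is in your final ``saturation'' step. Once every fiber is nonempty, what your count actually yields is $\#(\mathcal{S}_{n+1}(E))=\sum_{B\in\mathcal{S}_{n}(E)}q^{d(B)}$, and this equals $q^{d(E)}\#(\mathcal{S}_{n}(E))$ only if $d(B)=d(E)$ for every $B$. Nonemptiness of $\pi^{-1}(B)$ (equivalently $Ker(\varphi)\not\subseteq\eme_R^2$, equivalently $d(\varphi^{-1}(B))=d(B)+1$, by Proposition~\ref{prop-8} and Theorem~\ref{thm-25}) pins the fiber size at $q^{d(B)}$, not at $q^{d(E)}$; Proposition~\ref{prop-22} gives only $d(B)\leq d(E)$, and Section~\ref{sec-4.3} shows this inequality can be strict, so filling every fiber does not by itself saturate the bound $q^{d(B)}\leq q^{d(E)}$. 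The paper's own proof is explicit about this point: it derives the equality \emph{provided} $d(B)=d(E)$ for all $B\in\mathcal{S}_{n}(E)$, using the stated hypothesis for surjectivity (Proposition~\ref{prop-28} part 3) and for the exact fiber count $q^{d(B)}$. So to prove the equality from the stated hypothesis alone you would need an additional argument that $d(\varphi^{-1}(B))=d(B)+1$ for all $B\in\mathcal{S}_{n}(E)$ forces $d(B)=d(E)$ (the two propositions following this one relate $d(R)$, $d(B)$, $d(E(R))$, $d(E(B))$ but do not supply that implication), or else weaken the conclusion to $\#(\mathcal{S}_{n+1}(E))=\sum_{B}q^{d(B)}$; as written, your argument establishes surjectivity with fibers of size $q^{d(B)}$ and nothing more.
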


\begin{proof}
\begin{itemize}
\item Immediate.
\item If $n\notin E$, the above proof produces the inequality and so the only thing to check is the equality: $\#(\mathcal{S}_{n+1}(E))= q^{d(E)}\#(\mathcal{S}_{n}(E))$ provided for all $B\in \mathcal{S}_{n}(E)$, $d(B)=d(E)$. Indeed, if that's the case, as in the proof above, the fiber of the map for all 
$\mathcal{S}_{n+1}(E)\mapsto \mathcal{S}_{n}(E)$ has exactly $q^{d(B)}=q^{d(E)}$ elements. Furthermore, it's surjective by Proposition \ref{prop-28} part 3. Hence the equality $\#(\mathcal{S}_{n+1}(E))= q^{d(E)}\#(\mathcal{S}_{n}(E))$.
\end{itemize}
\end{proof}

\begin{prop} Let $B\subseteq \K[x]/x^n$ subalgebra and $R=\varphi^{-1}(B)$. If $d(R)=d(E(R))$, then $d(B)=d(E(B))$.
\end{prop}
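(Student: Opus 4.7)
The plan is to exploit that $R = \varphi^{-1}(B)$ automatically contains $\ker(\varphi) = \K x^n$, so $n \in E(R)$, and the restriction $\varphi|_R \colon R \to B$ is a minimal extension with one-dimensional kernel $\K x^n$ (the ideal $\K x^n$ is annihilated by $\eme_R$, hence a minimal ideal of $R$). I will introduce two ``defect'' integers $\delta_1 := d(R) - d(B)$ and $\delta_2 := d(E(R)) - d(E(B))$; the goal reduces to showing $\delta_1 = \delta_2$, since the hypothesis $d(R) = d(E(R))$ then immediately forces $d(B) = d(E(B))$.

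Proposition~\ref{prop-8} applied to this minimal extension gives $\delta_1 \in \{0,1\}$, with $\delta_1 = 1$ iff $x^n \notin \eme_R^2$. For $\delta_2$, I would use $E(B) = E(R) \setminus \{n\}$ (from Proposition~\ref{prop-21}, since $E(R) \subseteq [0,n]$ and $n \in E(R)$) together with a short partial-monoid calculation: any generator of $E(R)$ other than $n$ lies in $[0, n-1]$ and remains a generator of $E(B)$, because any decomposition in $E(B)$ is automatically a decomposition in $E(R)$; conversely, a generator of $E(B)$ admits no $E(R)$-decomposition involving $n$ as a summand, since that would force the total to be $\geq n$, outside the range of $E(B)$. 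Hence the minimal generating set of $E(B)$ equals that of $E(R)$ with $n$ removed (if present), so $\delta_2 \in \{0,1\}$ with $\delta_2 = 1$ iff $n$ is a generator of $E(R)$.

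The conclusion then comes from two opposite inequalities. Lemma~\ref{lem-20} gives that if $n \in E(R)$ is not a generator, then $x^n \in \eme_R^2$; contrapositively, $\delta_1 \leq \delta_2$. In the other direction, Proposition~\ref{prop-22} gives $d(B) \leq d(E(B))$; substituting $d(B) = d(E(R)) - \delta_1$ (via the hypothesis $d(R) = d(E(R))$) and $d(E(B)) = d(E(R)) - \delta_2$ rearranges to $\delta_1 \geq \delta_2$. Combining yields $\delta_1 = \delta_2$, whence $d(B) = d(E(B))$. The main delicate step is the combinatorial identification of the minimal generating set of $E(B)$; everything else is a direct invocation of results already established in the paper.
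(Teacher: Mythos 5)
Your proof is correct and uses the same ingredients as the paper's: the dichotomy $d(R)-d(B)\in\{0,1\}$ from Proposition \ref{prop-8}, the contrapositive of Lemma \ref{lem-20}, the bound of Proposition \ref{prop-22}, and the identification of the minimal generators of $E(B)$ with those of $E(R)$ other than $n$. The only difference is presentational: the paper argues by cases on $d(R)-d(B)$ while you sandwich the two ``defects'' between opposite inequalities, and you spell out the generator bookkeeping that the paper treats as immediate.
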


\begin{proof}  We have two cases: $d(R)=d(B)$ or $d(R)=d(B)+1$, and in both cases, from Proposition \ref{prop-21}, $E(R)=E(B)\cup \{n\}$.
\begin{itemize}
\item $d(R)=d(B)$. We need to show that $d(E(R))=d(E(B))$. It's clear that $d(E(B))\leq d(E(R))$ and $d(E(R))=d(R)=d(B)\leq d(E(B))$ by Proposition \ref{prop-22}, hence equality $d(E(B))=d(B)$.
\item $d(R)=d(B)+1$. We need to show that $d(E(R))=d(E(B))+1$, which is equivalent to show that $n$ is a generator of $E(R)$. But if not, by Lemma \ref{lem-20}, then $x^n$ would belong to $\eme_R^2$ which is not the case since the condition $d(R)=d(B)+1$ as we have above, says that $Ker(\varphi)=(x^n)$ is not in $\eme_R^2$. This concludes the result.
\end{itemize}
\end{proof}


\begin{prop} With the same setup as above, assume $d(R)=d(B)+1$. If $d(B)=d(E(B))$ then $d(E(R))=d(R)$.
\end{prop}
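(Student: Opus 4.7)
The plan is to reduce the statement to the observation that $n$ is a generator of $E(R)$ and that it does not interact with the other generators. From the hypothesis $R = \varphi^{-1}(B)$ we have $x^n \in R$, so by Proposition \ref{prop-21}, $E(R) = E(B) \cup \{n\}$. Moreover, the hypothesis $d(R) = d(B)+1$ is equivalent (by Proposition \ref{prop-8}) to $\mathrm{Ker}(\varphi) = (x^n)$ not being contained in $\eme_R^2$; by the contrapositive of Lemma \ref{lem-20}, this forces $n$ to be a minimal generator of the partial-monoid $E(R)$. These are exactly the two inputs extracted in the proof of the previous proposition.

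The key step I would carry out next is to show that the minimal generating set of $E(R)$ is precisely $\{n\} \cup G(E(B))$, where $G(E(B))$ denotes the minimal generating set of $E(B)$. The crucial observation is that $E(R) \subseteq [\mathbf{n}]$ and $n$ is its maximum element; in the partial-monoid structure on $[\mathbf{n}]$, the sum $n + k$ is undefined for any nonzero $k$, so $n$ cannot appear in any nontrivial additive decomposition of a smaller element $b \in E(B)$. Consequently, for every $b \in E(B)$, the set of nontrivial decompositions of $b$ inside $E(R)$ is identical to the set of its decompositions inside $E(B)$, so $b$ is a generator of $E(R)$ if and only if it is a generator of $E(B)$. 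Combined with the fact that $n$ is itself a generator of $E(R)$, this yields $d(E(R)) = d(E(B)) + 1$.

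Putting the pieces together, I would conclude
\[
d(E(R)) \;=\; d(E(B)) + 1 \;=\; d(B) + 1 \;=\; d(R),
\]
where the middle equality uses the second hypothesis $d(B) = d(E(B))$, and the last equality is the first hypothesis. The only non-routine part of the argument is the verification that the generators of $E(B)$ remain generators in $E(R)$; but since this is just the remark that the maximum element cannot be a summand in a proper decomposition of a smaller element of the partial-monoid, there is no real obstacle.
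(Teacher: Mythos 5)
Your proof is correct and follows essentially the same route as the paper: it uses $d(R)=d(B)+1 \iff Ker(\varphi)\not\subseteq \eme_R^2$ together with the contrapositive of Lemma \ref{lem-20} to see that $n$ is a generator of $E(R)$, exactly as in the proof of the preceding proposition to which the paper defers. The only difference is that you spell out explicitly why $d(E(R))=d(E(B))+1$ follows (the maximal element $n$ cannot occur in a decomposition of a smaller exponent), a step the paper treats as immediate.
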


\begin{proof} To show $d(E(R))=d(E(B))+1$, and the proof is the same as above.
\end{proof}

When $d(R)=d(B)$ the above is not true as section \ref{sec-4.3} on $\dim_{\K}(\eme_R/\eme_R^2)\leq d(E)$ shows.


\section{Subrings of $\Z[x]/(p^N, x^n)$}\label{sec-5}

\subsection{Setting}

To study the subrings of $R=\Z[x]/(p^N, x^n)$ in the framework of minimal extensions we need a consider a slightly larger family of rings. Most proofs in this section are analogous to those of $\K[x]/x^n$ and will be omitted for the most part.

\begin{definition} Let $n\geq 2$, $1\leq k\leq N$. $R_{n, N, k}$ is defined as the ring $R_{n, N, k}=\Z[x]/(p^N, x^n, p^kx^{n-1})$.
\end{definition}

\noindent This family ``interpolates" between family $\Z[x]/(p^N, x^n)$ in the sense that $R_{n, N, N}=\Z[x]/(p^N, x^n)$ and $R_{n, N, 0}=\Z[x]/(p^N, x^{n-1})=R_{n-1, N, N}$. Notice that the rings $R_{n, N, k}$ decrease as $k$ decreases from $N$ to $0$, more precisely:

\begin{lem} $R_{n, N, j}$ is a quotient of $R_{n, N, k}$ for $N \geq k\geq j$.
\end{lem}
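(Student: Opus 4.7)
The plan is to exhibit $R_{n,N,j}$ as a quotient of $R_{n,N,k}$ via the universal property of quotient rings, which amounts to comparing the two defining ideals in $\Z[x]$.

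First I would note that, by definition, $R_{n,N,k} = \Z[x]/I_k$ and $R_{n,N,j} = \Z[x]/I_j$ where
\[
I_k = (p^N, x^n, p^k x^{n-1}), \qquad I_j = (p^N, x^n, p^j x^{n-1}).
\]
So the lemma follows from the inclusion $I_k \subseteq I_j$, which would give a surjection $\Z[x]/I_k \twoheadrightarrow \Z[x]/I_j$ by the third isomorphism theorem, exhibiting $R_{n,N,j}$ as a quotient of $R_{n,N,k}$.

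The key observation is that when $k \geq j$, one has $p^k x^{n-1} = p^{k-j}\cdot(p^j x^{n-1}) \in I_j$, while the generators $p^N$ and $x^n$ are shared between $I_k$ and $I_j$. Hence every generator of $I_k$ lies in $I_j$, giving $I_k \subseteq I_j$. There is no real obstacle here: the result is essentially a direct unpacking of the definition, using only that $k-j \geq 0$ so that $p^{k-j}$ is an element of $\Z[x]$.
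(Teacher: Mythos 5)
Your argument is correct: since $k\geq j$ gives $p^kx^{n-1}=p^{k-j}\,(p^jx^{n-1})\in I_j$ while $p^N$ and $x^n$ are common generators, the inclusion $I_k\subseteq I_j$ yields the desired surjection $\Z[x]/I_k\twoheadrightarrow \Z[x]/I_j$. This is exactly the routine verification the paper leaves implicit (it states the lemma without proof), so nothing further is needed.
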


We can extend the definition of the partial-valuation of Lemma \ref{lem-12} to this entire family:

\begin{definition} Let $R=R_{n, N, k}$. Write a nonzero element $x$ as a sum of powers in increasing order $x=a_kx^k+ \text{ higher order terms}$, where $a_i\in \Z/p^N$ is nonzero, let $\nu$ be the function $\nu: R_{n,N,k}\to \mathcal{M}_{n,N}$ defined by $\nu(x)=(k,\nu_1(a_k))$ (where $\nu_1$ is the natural partial-valuation on $\Z/p^N$ given by $\nu_1(up^m)=m$ where $u$ invertible). Here $(k, \nu_1(a_k))$ is called an exponent of $R$ and we define $D(R)$ as the set of exponents.
\end{definition}

\begin{lem} 
\begin{enumerate}
\item The set $\mathcal{M}_{n,N}=[\mathbf{n-1}]\times [\mathbf{N-1}]$ is a totally ordered monoid with the lexicographic order.
\item $\nu$ is a strict partial valuation on $R_{n, N, k}$ with values in the monoid $\mathcal{M}_{n,N}$ (whose image is the set where $(a,b)\in D(R)$ and $a=n$ then $b\leq k$). 
\item The Non-Archimedean property holds: for any two elements $r_1, r_2$, $\nu(r_1+r_2)\geq \min\{\nu(r_1),\nu(r_2)\}$ and equality holds if $\nu(r_1)\neq \nu(r_2)$.
\item $D(R)$ is a partial-monoid.
\end{enumerate}
\end{lem}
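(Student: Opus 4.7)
The plan is to verify the four items in turn by adapting the proof of Lemma~\ref{lem-12}, since $R_{n,N,k}$ differs from $\Z[x]/(p^N,x^n)$ only by the extra relation $p^kx^{n-1}=0$, which affects only the single position $(n-1,\,\cdot\,)$ in valuation.

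For part~(1), I would define the partial sum on $\mathcal{M}_{n,N}$ componentwise, $(a,b)+(c,d):=(a+c,b+d)$, declared defined exactly when $a+c\leq n-1$ and $b+d\leq N-1$. Commutativity, associativity-when-defined, and the identity $(0,0)$ are immediate. The lex order is a total order on $\mathcal{M}_{n,N}$, and compatibility with the sum follows coordinate by coordinate from the compatibility on $[\mathbf{n-1}]$ and $[\mathbf{N-1}]$ separately.

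For part~(2), I would mimic the coefficient computation of Lemma~\ref{lem-12} inside $R_{n,N,k}$. For nonzero $z=a_jx^j+(\text{h.o.t.})$ and $w=b_lx^l+(\text{h.o.t.})$, the key point is that if $\nu(z)+\nu(w)$ is defined in the correct target partial-monoid, then $j+l\leq n-1$, $\nu_1(a_j)+\nu_1(b_l)\leq N-1$, and in the boundary case $j+l=n-1$ we additionally require $\nu_1(a_j)+\nu_1(b_l)<k$. Under these hypotheses the product $a_jb_l x^{j+l}$ is nonzero in $R_{n,N,k}$, so $zw\neq 0$ and $\nu(zw)=(j+l,\nu_1(a_j)+\nu_1(b_l))=\nu(z)+\nu(w)$, which gives semi-strictness. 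The image $D(R)$ thus lies in, and I would show equals, the sub-partial-monoid $\{(a,b)\in\mathcal{M}_{n,N}:a=n-1\Rightarrow b<k\}$. The main (mild) obstacle is precisely this bookkeeping: one must pin down the image partial-monoid so that ``$\nu(z)+\nu(w)$ defined'' matches ``$zw\neq 0$ in $R_{n,N,k}$''. The parenthetical in the statement of the lemma should be read this way.

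Parts~(3) and (4) are then short. For Non-Archimedean I would split cases exactly as in Lemma~\ref{lem-12}: if $\nu(r_1)<\nu(r_2)$ the lex order on leading terms forces the leading term of $r_1+r_2$ to coincide with that of $r_1$, giving $\nu(r_1+r_2)=\nu(r_1)=\min\{\nu(r_1),\nu(r_2)\}$; if $\nu(r_1)=\nu(r_2)$ cancellation may occur at the common leading term, and whatever survives has strictly larger valuation, yielding the weak inequality. Finally, that $D(R)$ is a sub-partial-monoid of $\mathcal{M}_{n,N}$ is immediate from Proposition~\ref{prop-9} once $\nu$ is known to be (semi-)strict.
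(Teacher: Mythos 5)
Your proposal is correct and is essentially the proof the paper intends: the paper omits this argument as analogous to Lemma~\ref{lem-12}, and your adaptation --- componentwise partial addition on $\mathcal{M}_{n,N}$, the leading-coefficient computation for semi-strictness with the extra boundary case $j+l=n-1$ requiring $\nu_1(a_j)+\nu_1(b_l)<k$, the two-case lexicographic argument for the Non-Archimedean property, and Proposition~\ref{prop-9} for part (4) --- is exactly that, and your insistence on taking the target to be the image with the operation declared defined precisely when the componentwise sum stays in the image (so that ``$\nu(z)+\nu(w)$ defined'' matches ``$zw\neq 0$'') is the correct reading of the paper's garbled parenthetical. Two small touch-ups: lex-compatibility is not literally a coordinate-by-coordinate matter (when the first coordinates are strictly ordered the second coordinates need not compare), though the two-case check is immediate; and under the paper's own definition the image is best treated as a partial-monoid in its own right rather than a sub-partial-monoid of $\mathcal{M}_{n,N}$, since for $k<N$ (and $n\ge 3$) it is not closed under the ambient operation --- e.g.\ $(n-2,0)+(1,k)=(n-1,k)$ is defined in $\mathcal{M}_{n,N}$ but lies outside the image --- which is precisely why your restricted-operation convention is needed.
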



\subsection{The set of exponents $D(R)$ and generators}

Let $R\subseteq R_{n, N, k}$ a subring. Theorem \ref{thm-10} and its proof give:

\begin{lem} If $\{a_1, ... , a_d\}$ generate $D(R)$ as partial-monoid. Let $r_1, ... , r_d$ be monic such that $\nu(r_i)=a_i$. Then $r_i$ generate $R$ as algebra. Furthermore, if $(n-1, k-1)\in D(R)$ is not a generator, then $p^{k-1}x^{n-1}\in \eme_R^2$, more precisely, $p^{k-1}x^{n-1}$ is a nontrivial (not just one factor) monomial $r_1^{\alpha_1}....r_d^{\alpha_d}=p^{k-1}x^{n-1}$ where $n-1=\alpha_1a_1+ ... + \alpha_da_d$.
\end{lem}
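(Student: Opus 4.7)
The plan is to deduce this lemma directly from Theorem \ref{thm-10}, exactly as Lemma \ref{lem-20} was obtained in the polynomial case. First I would note that the argument of Lemma \ref{lem-12} carries over verbatim to $R_{n,N,k}$: the function $\nu$ takes values in the finite totally ordered partial-monoid $\mathcal{M}_{n,N}$, is semi-strict, and is monomial-like over $\Z/p^N$. Indeed, if $z=a_j x^j+\cdots$ and $w=b_j x^j+\cdots$ share the same valuation $(j,\nu_1(a_j))=(j,\nu_1(b_j))$, the partial-valuation $\nu_1$ on $\Z/p^N$ supplies a unit $u\in(\Z/p^N)^{\times}$ with $a_j=ub_j$, so $z-uw$ either vanishes or has a strictly higher leading term in $x$ or in $p$-valuation. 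With these observations Theorem \ref{thm-10} applies and the chosen representatives $r_1,\dots,r_d$ generate $R$ as a $\Z/p^N$-algebra.

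For the second statement, assume $(n-1,k-1)\in D(R)$ is not a minimal generator. Because the order on $\mathcal{M}_{n,N}$ is lexicographic and the image of $\nu$ on $R_{n,N,k}$ is bounded above by $(n-1,k-1)$, this is the largest element that can occur in $D(R)$. Any decomposition $(n-1,k-1)=\alpha_1 a_1+\cdots+\alpha_d a_d$ as a sum of minimal generators must involve at least two factors, since a one-factor expression $1\cdot a_i$ would force $a_i=(n-1,k-1)$ and contradict the non-generator assumption. Form the monomial $\tilde{r}=r_1^{\alpha_1}\cdots r_d^{\alpha_d}$; by semi-strictness of $\nu$ it is nonzero and $\nu(\tilde{r})=(n-1,k-1)$, which is also the valuation of $p^{k-1}x^{n-1}$.

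Now apply the monomial-like property to the pair $\tilde{r}$ and $p^{k-1}x^{n-1}$: there is a unit $u\in(\Z/p^N)^{\times}$ with either $\tilde{r}-u\,p^{k-1}x^{n-1}=0$ or $\nu(\tilde{r}-u\,p^{k-1}x^{n-1})>(n-1,k-1)$. The latter is impossible by maximality of $(n-1,k-1)$ in $D(R)$, so $\tilde{r}=u\,p^{k-1}x^{n-1}$. Absorbing the unit $u$ into one of the $r_i$ (since ``monic'' here should be interpreted as leading coefficient a unit of $\Z/p^N$) yields the identity $r_1^{\alpha_1}\cdots r_d^{\alpha_d}=p^{k-1}x^{n-1}$. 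Because the expression involves at least two factors from $\eme_R$, this monomial lies in $\eme_R^2$.

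The main obstacle is conceptual rather than computational: one must carefully verify the maximality of $(n-1,k-1)$ in $D(R)$ in this mixed-characteristic setting and handle the unit $u$ from the monomial-like property so that the stated equality $r_1^{\alpha_1}\cdots r_d^{\alpha_d}=p^{k-1}x^{n-1}$ holds on the nose. Both issues are resolved by the total lexicographic order on $\mathcal{M}_{n,N}$ and the usual ``monic'' convention, after which the reverse-induction structure of the proof of Theorem \ref{thm-10} furnishes the rest.
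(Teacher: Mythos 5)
Your proposal is correct and follows essentially the route the paper intends: the paper gives no separate argument here, simply invoking Theorem \ref{thm-10} together with the monomial-like property of $\nu$ (as in Lemma \ref{lem-12}) exactly as in Lemma \ref{lem-20}, and your use of the maximality of $(n-1,k-1)$ in the image of $\nu$ on $R_{n,N,k}$ plays the same role that the maximality of $n-1$ plays in the $\K[x]/x^n$ case. One cosmetic remark: with the natural convention that ``monic'' means leading coefficient exactly $p^{m_i}$, the product $r_1^{\alpha_1}\cdots r_d^{\alpha_d}$ already has leading term $p^{k-1}x^{n-1}$ while every higher-valuation term vanishes in $R_{n,N,k}$ (since $x^n=0$ and $p^kx^{n-1}=0$), so the unit produced by the monomial-like property is $1$ and no absorption into the $r_i$ is required.
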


\begin{prop}\label{prop-36} The valuation maps are compatible under injection and projection: Let $R\subseteq R_{n, N, k}$ be a subring:
\begin{itemize}
\item $p^{k-1}x^{n-1}\in R \iff (n-1, k-1)\in D(R)$
\item Let $k= j+1$ and consider the projection map $\varphi: R_{n,N,k}\to R_{n,N,j}$. Then the valuation maps are identical where defined: for any nonzero $\bar{r}\in \varphi(R_{n,N,k})$, $\nu(\bar{r})=\nu(r)$ for any preimage $r$ of $\bar{r}$.
\item In particular, if the projection gives an isomorphism $R\cong \varphi(R)$, the sets of exponents are identical.
\item In general $D(\varphi(R))=D(R)\cap \nu(R_{n,N,k})$
\item The cardinality of $R$ is $\#(R)=p^{\#(D(R))}$.
\end{itemize}
\end{prop}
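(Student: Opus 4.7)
The plan is to follow the template of Proposition \ref{prop-21}, adapting the arguments to the finer partial-valuation $\nu$ on $R_{n,N,k}$ defined in Lemma \ref{lem-12}.

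For the first bullet, the forward implication is immediate from the definition of $\nu$. For the converse, if $(n-1,k-1)\in D(R)$ I would pick a witness $r\in R$ with $\nu(r)=(n-1,k-1)$; since $(n-1,k-1)$ is the maximum element of the image of $\nu$ on $R_{n,N,k}$, the element $r$ has no strictly higher-order terms, so $r=up^{k-1}x^{n-1}$ for some $u\in (\Z/p^N)^{\times}\subseteq \Z\subseteq R$. Multiplying by $u^{-1}$ (which also lies in $\Z\subseteq R$) gives $p^{k-1}x^{n-1}\in R$.

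Bullets two and three follow from the observation that the kernel of $\varphi$ is the one-dimensional subspace spanned by $p^{k-1}x^{n-1}$. Hence for any $r\in R$ with $\varphi(r)\neq 0$, the leading term of $r$ (whose exponent lies strictly below $(n-1,k-1)$ in lex order) survives in $\varphi(r)$, so $\nu(\varphi(r))=\nu(r)$. The third bullet follows instantly by applying this termwise to an isomorphism. The fourth bullet then recasts $D(\varphi(R))$ as the set of $\nu(r)\in D(R)$ that survive in the target of $\varphi$, i.e.\ the intersection of $D(R)$ with the image of $\nu$ on the codomain.

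The fifth bullet is the one substantive step. I would introduce the additive filtration $R_\mu=\{r\in R:\nu(r)\geq \mu\}\cup\{0\}$ indexed by $\mu\in \mathcal{M}_{n,N}$ traversed in decreasing lex order; the non-Archimedean property of $\nu$ guarantees each $R_\mu$ is an additive subgroup. For consecutive $\mu<\mu'$, the monomial-like property from Lemma \ref{lem-12} shows that any two elements of $R$ of valuation exactly $\mu$ agree up to a $\Z/p^N$-unit modulo $R_{\mu'}$, so $R_\mu/R_{\mu'}$ is cyclic generated by a single witness $r_\mu$; and $pr_\mu$ has valuation $(a,b+1)$ when $\mu=(a,b)$ with $b<N-1$ (and is zero when $b=N-1$), so in either case $pr_\mu\in R_{\mu'}$ and the quotient is annihilated by $p$. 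Thus $R_\mu/R_{\mu'}\cong \F_p$ precisely when $\mu\in D(R)$ and is $0$ otherwise, and multiplying sizes across the filtration yields $\#R=p^{\#D(R)}$.

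The main obstacle is the fifth bullet, specifically checking that each graded piece is exactly $\F_p$ or $0$. This hinges on combining the strict (non-Archimedean) and monomial-like character of $\nu$ established in Lemma \ref{lem-12} with the elementary fact that multiplication by $p$ reliably increments the second coordinate of $\nu$ whenever it is defined.
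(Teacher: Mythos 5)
Your proposal is correct. Note that the paper itself offers no argument for this proposition: Section 5 announces that proofs are omitted as analogous to the $\K[x]/x^n$ case, and the counterpart, Proposition \ref{prop-21}, is proved by ``immediate by definition.'' So there is no route in the paper to diverge from; your writeup supplies the missing details, and along the intended lines. The first four bullets are handled as expected: in the first bullet the key point is that any element of valuation $(n-1,k-1)$ is exactly a unit multiple of $p^{k-1}x^{n-1}$ and that every unital subring contains the prime ring $\Z/p^N$, which you use correctly; and your reading of the fourth bullet as intersection with the image of $\nu$ on the \emph{codomain} $R_{n,N,j}$ is the right one (mirroring $E(\varphi(R))=E(R)\cap[0,m-1]$ in Proposition \ref{prop-21}); as literally printed the formula would be vacuous. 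For the fifth bullet, the filtration $R_\mu=\{r\in R:\nu(r)\geq\mu\}\cup\{0\}$ in decreasing lexicographic order is the natural device, and the two inputs you isolate are exactly what is needed: monomial-likeness over $\Z/p^N$ makes each graded piece cyclic (note that a unit multiple $ur_\mu$ is an integer multiple of $r_\mu$, so cyclicity as an abelian group does hold), and $p$-torsion of each piece pins its order to $p$ when $\mu\in D(R)$ and $1$ otherwise. One small imprecision: $pr_\mu$ need not have valuation exactly $(a,b+1)$ when $b<N-1$ (if $a=n-1$ and $b+1\geq k$ it is already zero), and it need not be zero when $b=N-1$ (higher-degree terms of $r_\mu$ can survive multiplication by $p$); but in every case each surviving term of $pr_\mu$ has valuation strictly larger than $\mu$, so $pr_\mu\in R_{\mu'}$, which is all your argument actually uses. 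With that caveat the proof is complete.
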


\begin{prop} For $D\subseteq \mathcal{M}_{n,N}$ a sub-partial-monoid, let $d(D)$ be the cardinality of its (unique) minimal generating set. If $R\subseteq R_{n,N,k}$ has $D(R)=D$, then $\dim_{\F_p}(\eme_R/(\eme_R^2+pR))\leq d(D)-1$.
\end{prop}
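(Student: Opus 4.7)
The plan is to mimic Proposition~\ref{prop-22}: use a minimal generating set of $D(R)$ to extract, via the preceding lemma, a set of $\Z/p^N$-algebra generators of $R$, and then observe that these generators span $\eme_R/(\eme_R^2+pR)$ as an $\F_p$-vector space. The extra ``$-1$'' in the target bound will come from the fact that one of the monoid generators can always be taken to be $p$ itself, which becomes zero in the quotient by $pR$.

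First I would show that $(0,1)$ must appear in any minimal generating set of $D=D(R)$. Since $1\in R$ and $R$ inherits characteristic $p^N$ from $R_{n,N,k}$ (and assuming $N\ge 2$), $p$ is a nonzero element of $R$ with $\nu(p)=(0,1)$, so $(0,1)\in D$. In the lexicographic order, $(0,1)$ is the strictly smallest nonzero element of $\mathcal{M}_{n,N}$, so it cannot be written as a sum of two nonzero elements of $D$ and must therefore be a minimal generator. Relabel so $a_1=(0,1)$, exploit the freedom in the preceding lemma to pick any representative $r_i$ of valuation $a_i$, set $r_1:=p$, and choose monic $r_2,\dots,r_d\in R$ arbitrarily with $\nu(r_i)=a_i$. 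The lemma then gives that $r_1,\dots,r_d$ generate $R$ as a $\Z/p^N$-algebra, and each of them lies in $\eme_R$ since $a_i\neq(0,0)$.

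Next, bound the quotient directly: any $z\in\eme_R$ expands as a polynomial $P(r_1,\dots,r_d)$ with coefficients in $\Z/p^N$. The constant term of $P$ must land in $p\Z/p^N\subseteq pR$, monomials of total degree $\ge 2$ in the $r_i$'s land in $\eme_R^2$, and linear terms with coefficient in $p\Z/p^N$ land in $pR$. Hence modulo $\eme_R^2+pR$ the element $z$ collapses to an $\F_p$-linear combination $\sum_i \bar c_i\, r_i$. Since $r_1=p$ vanishes in this quotient, only $\{r_2,\dots,r_d\}$ remain as spanning vectors, yielding $\dim_{\F_p}(\eme_R/(\eme_R^2+pR))\le d-1=d(D)-1$.

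The main subtlety is the privileged choice $r_1=p$: one must know that the preceding lemma permits this particular representative of valuation $(0,1)$. But the statement of that lemma is universal in the choice of monic $r_i$ with $\nu(r_i)=a_i$, and $p$ is monic of valuation $(0,1)$, so there is no obstruction. (A minor caveat: the bound implicitly presupposes $N\ge 2$, since for $N=1$ the element $(0,1)$ is not in $\mathcal{M}_{n,N}$ and the sharpening by $-1$ would already fail for $R=\F_p[x]/x^n$, where Proposition~\ref{prop-22} is tight.)
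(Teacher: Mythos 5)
Your argument is correct and is essentially the intended proof (the paper omits it as analogous to Proposition \ref{prop-22}, i.e.\ combining the generation lemma with the observation that algebra generators of valuation the minimal generators of $D$ span the quotient): every element of $\eme_R$ is a $\Z/p^N$-polynomial in the $r_i$, whose constant term falls into $pR$, whose higher-degree part falls into $\eme_R^2$, and whose linear part reduces to an $\F_p$-combination of the $r_i$, while the forced generator $(0,1)$, realized by $r_1=p$, vanishes modulo $pR$, which accounts for the $-1$. Your caveat is also apt: the statement implicitly requires $p\neq 0$ in $R$ (i.e.\ $N\geq 2$), since for $N=1$ one recovers the $\F_p[x]/x^n$ setting where only the bound of Proposition \ref{prop-22} without the $-1$ holds.
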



\subsection{Subrings of given shape and counting}

\begin{lem} The extension $\varphi: R_{n,N,k+1}\to R_{n,N,k}$ is minimal. Furthermore, for any subring $R\subseteq R_{n,N,k+1}$ containing $p^kx^{n-1}$, the extension $R\to R/(p^kx^{n-1})$ is minimal.
\end{lem}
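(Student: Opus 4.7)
The plan is to handle the two assertions in sequence. For minimality of $\varphi \colon R_{n,N,k+1} \to R_{n,N,k}$, I would compute the kernel directly and apply Lemma \ref{lem-2}. By construction, $\ker(\varphi)$ is the ideal of $R_{n,N,k+1}$ generated by the image of $p^k x^{n-1}$, which is nonzero there (only $p^{k+1} x^{n-1}$ has been killed). Inside $R_{n,N,k+1}$ one has $p \cdot p^k x^{n-1} = p^{k+1} x^{n-1} = 0$ and $x \cdot p^k x^{n-1} = p^k x^n = 0$, so the maximal ideal $\eme = (p, x)$ annihilates this ideal, and as an $\F_p$-vector space it is one-dimensional, spanned by $p^k x^{n-1}$. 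Lemma \ref{lem-2} then gives that it is a minimal nonzero ideal, so $\varphi$ is a minimal extension.

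For the second claim, I intend to invoke Proposition \ref{prop-7} (case 2) applied to the subring $T = \varphi(R) \subseteq R_{n,N,k}$. The hypothesis $p^k x^{n-1} \in R$, i.e. $\ker(\varphi) \subseteq R$, forces $\varphi^{-1}(T) = R$, so the restriction $R \to R/(p^k x^{n-1})$ is exactly $\varphi^{-1}(T) \to T$. The required ambient data (characteristic $p^N$, nilpotence of $\eme_{R_{n,N,k+1}}$, common residue field $\F_p$) are immediate from the definitions of the rings $R_{n,N,\ast}$. The remaining ingredient is that $T$ itself is local with residue field $\F_p$, which I would verify by borrowing the nilpotent-unit argument from the proof of Theorem \ref{thm-4}(2): for any $a \in T$ with nonzero image in $\F_p$, $a^{p-1} - 1$ lies in the nilpotent ideal $\eme_{R_{n,N,k}}$, so $(a^{p-1} - 1)^m = 0$ for some $m$, and expanding this relation produces an inverse of $a$ polynomial in $a$ over $\Z$, hence in $T$. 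With $T$ local, Proposition \ref{prop-7} delivers that $\varphi \colon R \to T$ is minimal.

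I do not foresee any substantial obstacle: the first assertion is a direct kernel computation using Lemma \ref{lem-2}, and the second is a formal restriction argument once local-ness of $T$ is checked, which merely recycles the unit-lifting trick already established in the paper. The mild bookkeeping point worth flagging is the identification $\varphi^{-1}(T) = R$, which is precisely why the containment $p^k x^{n-1} \in R$ is built into the hypothesis.
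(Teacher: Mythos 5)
Your proposal is correct and follows essentially the same route as the paper: identify the kernel $(p^k x^{n-1})$ as a minimal ideal of $R_{n,N,k+1}$ (the paper notes it is in fact the unique minimal ideal, but minimality, which you verify via Lemma \ref{lem-2}, is all that is needed), and then deduce the second claim from the restriction result, Proposition \ref{prop-7}. Your extra verifications (that $\varphi^{-1}(T)=R$ when $p^k x^{n-1}\in R$, and that $T$ is local with residue field $\F_p$ via the unit-lifting trick) are exactly the details the paper leaves implicit, and they check out.
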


\begin{proof} In fact $(p^kx^{n-1})$ is the unique minimal ideal of $R_{n,N,k+1}$. The second follows as well (applying restrictions of minimal extensions, Proposition \ref{prop-7}).
\end{proof}

Here's the main use of our results on minimal extensions:

\begin{thm}\label{thm-39} Let $R\subseteq R_{n,N,k+1}$ subring.

\begin{enumerate}
\item If $p^{k}x^{n-1}\in \eme_R^2+pR$, there are no subrings of $R_{n,N,k+1}$ mapping isomorphically onto $\varphi(R)=S$.
\item Otherwise, the set of such subrings is parametrized by an affine space over $\F_p$ of dimension $d(S)=\dim_{\F_p}(\eme_S/(\eme_S^2+pS))$, hence there are $p^{d(S)}$ of them.
\end{enumerate}
\end{thm}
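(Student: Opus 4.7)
The strategy mirrors the proof of Theorem \ref{thm-25}: apply Theorem \ref{main-thm} part 2 (the mixed-characteristic case) to the restriction of $\varphi: R_{n,N,k+1}\to R_{n,N,k}$ to the relevant subring.

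First I would observe that any subring $A\subseteq R_{n,N,k+1}$ mapping isomorphically onto $S=\varphi(R)$ automatically lies in $\varphi^{-1}(S)$, so counting such $A$ is the same as counting subrings of $\varphi^{-1}(S)$ that map isomorphically onto $S$ under the restriction of $\varphi$. To verify that this restriction is still a minimal extension I would invoke Proposition \ref{prop-7} part 2: the ambient $\varphi$ is minimal with kernel $(p^{k}x^{n-1})$ by the preceding lemma; the ring $R_{n,N,k+1}$ has characteristic $p^N$ and its maximal ideal $(p,x)$ is nilpotent; and its residue field is $\F_p$. Since $\F_p$ is the prime field, every local subring containing $1$ automatically has residue field $\F_p$, so the shared-residue-field hypothesis is free. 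The subrings in question are local by the standard argument using nilpotency of $(p,x)$ to invert non-maximal-ideal elements (as in the proof of Theorem \ref{thm-4} part 2).

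With the minimal-extension structure in place, Theorem \ref{main-thm} part 2 applies directly. The kernel is the one-dimensional $\F_p$-vector space spanned by $p^{k}x^{n-1}$. If $p^{k}x^{n-1}\in \eme_R^2+pR$ the theorem immediately rules out any subring mapping isomorphically onto $S$. Otherwise the set of such subrings is canonically an affine space over $\F_p$ of dimension $\dim_{\F_p}(\eme_S/(\eme_S^2+pS))=d(S)$, and since $|\F_p|=p$ this affine space has exactly $p^{d(S)}$ points.

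There is no genuine obstacle here beyond unpacking definitions: the only substantive check is that restriction preserves minimality of the extension, which is precisely what Proposition \ref{prop-7} was formulated to supply, and the residue-field condition it demands is automatic because the residue field is the prime field $\F_p$.
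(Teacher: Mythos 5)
Your proposal is correct and follows essentially the same route as the paper: the paper also deduces the theorem directly from Theorem \ref{main-thm} part 2, with the restriction to $\varphi^{-1}(S)\to T$ handled via Proposition \ref{prop-7} and the minimality of $\varphi$ coming from the preceding lemma on the ideal $(p^k x^{n-1})$. You merely make explicit the checks (characteristic $p^N$, nilpotent maximal ideal, residue field $\F_p$ automatic over the prime field) that the paper leaves as a one-line remark.
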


\begin{proof} This is Theorem \ref{main-thm} part 2, since the subrings involved have characteristic $p^N$ and residue field $\F_p$.
\end{proof}

\begin{cor} Suppose that $A\subseteq R_{n,N,k+1}$ and that $(n-1, k-1)\notin D(A)$. Then the number of subrings of $R_{n,N,k}$ mapping isomorphically onto $\varphi(A)$ is $p^{d(A)}$ where $d(A)\leq d(D(A))$.
\end{cor}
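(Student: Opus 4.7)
The plan is to mirror the proof of Corollary~\ref{cor-27} step for step in the present setting. Let $S=\varphi(A)\subseteq R_{n,N,k}$ and set $R=\varphi^{-1}(S)\subseteq R_{n,N,k+1}$. The ambient map $\varphi\colon R_{n,N,k+1}\to R_{n,N,k}$ is a minimal extension whose kernel is the one-dimensional $\F_p$-line spanned by $p^k x^{n-1}$, and by Proposition~\ref{prop-7} its restriction $\varphi\colon R\to S$ remains a minimal extension, since all the rings involved have residue field $\F_p$, characteristic dividing $p^N$, and nilpotent maximal ideals.

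The hypothesis on $D(A)$, together with Proposition~\ref{prop-36}, forces $A$ not to contain the generator of $\operatorname{Ker}\varphi$. Since $\operatorname{Ker}\varphi$ is a minimal ideal (one-dimensional over $\F_p$), either $\operatorname{Ker}\varphi\subseteq A$ or $A\cap \operatorname{Ker}\varphi=0$; the hypothesis rules out the first alternative, so $\varphi|_A\colon A\to S$ is an isomorphism. In particular $\eme_A\cong \eme_S$ and $d(A)=d(S)$.

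Now apply Theorem~\ref{thm-39} to the minimal extension $\varphi\colon R\to S$. The mere existence of the subring $A\subseteq R$ mapping isomorphically onto $S$ forces us into case~(2) of the theorem (case~(1) would forbid any such subring altogether), so the number of subrings of $R_{n,N,k+1}$ mapping isomorphically onto $S$ equals $p^{d(S)}=p^{d(A)}$. The inequality $d(A)\le d(D(A))$ is the dimension bound stated as the analog of Proposition~\ref{prop-22} in the new setting (the proposition immediately following Proposition~\ref{prop-36}), applied to $A\subseteq R_{n,N,k+1}$.

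The main bookkeeping hurdle I anticipate is confirming the hypotheses of Proposition~\ref{prop-7} for the restricted map $R\to S$ (local subring, common residue field $\F_p$, the relevant nilpotency) and checking that the exponent condition on $A$ translates cleanly to the assertion that $A$ misses the kernel generator of $\varphi$; once that dictionary is pinned down via Proposition~\ref{prop-36} and the one-dimensionality of $\operatorname{Ker}\varphi$, every other step is a direct transcription of the corresponding step in the proof of Corollary~\ref{cor-27}.
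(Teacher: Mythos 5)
Your argument is correct and takes essentially the same route as the paper's own proof: set $S=\varphi(A)$ and $R=\varphi^{-1}(S)$, use the exponent hypothesis to see $A\to S$ is an isomorphism, invoke Theorem \ref{thm-39} (necessarily case 2, since $A$ exists) to get the count $p^{d(S)}=p^{d(A)}$, and use the dimension bound following Proposition \ref{prop-36} for $d(A)\leq d(D(A))$. The only wrinkle you flag --- matching the stated condition $(n-1,k-1)\notin D(A)$ with the kernel generator $p^{k}x^{n-1}$ of valuation $(n-1,k)$ --- is an indexing issue already present in the paper's statement, and you resolve it exactly as the paper's proof implicitly does.
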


\begin{proof} Let $S=\varphi(A), R=\varphi^{-1}(A)$. Since $(n-1, k-1)\notin D(A)$, the map $A\to S$ is an isomorphism and applying Theorem \ref{thm-39}, the number of such subrings is $p^{d(S)}$ where $d(S)=\dim_{\F_p}(\eme_S/(\eme_S^2+pS))=\dim_{\F_p}(\eme_A/(\eme_A^2+pA))=d(A)$ and $d(A)\leq d(D(A))$.
\end{proof}


\subsubsection{Shape}

\begin{definition}  Let $D\subseteq \mathcal{M}_{n,N}$ a sub-partial-monoid. The collection of subrings of $R_{n,N,k}$ of shape $D$ is $\mathcal{S}_{n,N,k}(D)=\{ R\subseteq R_{n,N,k} \, \mid D(R)=D\}$, a partition of the set of subrings of $R_{n,N,k}$.
\end{definition}

We need a simple characterization of those $D$ that are the set of exponents of a subring.

\begin{prop}\label{prop-42} $D\subseteq \mathcal{M}_{n,N}$ is the set of exponents of a subring $R\subseteq R_{n,N,k}$ $\iff$ $D$ is a partial-sub-monoid of $\mathcal{M}_{n,N}$ containing $(p^i, 0)$ for all $0\leq i\leq N-1$.
\end{prop}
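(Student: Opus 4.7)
The plan is to prove the two implications separately.

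For the forward direction, assume $D = D(R)$ for some subring $R \subseteq R_{n,N,k}$. Since the partial valuation $\nu$ is semi-strict, Proposition \ref{prop-9} immediately gives that $D = \nu(R\setminus\{0\})$ is a sub-partial-monoid of $\mathcal{M}_{n,N}$. Furthermore, any unital subring contains the image of $\Z$, so $p^i \in R$ for every $0 \le i \le N-1$, and by the definition of $\nu$ we have $\nu(p^i) = (0, i)$, giving $(0, i) \in D$.

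For the backward direction, given such a $D$ (which is necessarily contained in $\nu(R_{n,N,k})$), the natural candidate is
\[
R \;=\; \Bigl\{\, {\textstyle\sum_a} c_a x^a \in R_{n,N,k} \;:\; (a,\nu_1(c_a)) \in D \text{ for every } a \text{ with } c_a \ne 0 \,\Bigr\}.
\]
I would then verify that $R$ is a subring and that $D(R) = D$. The equality $D(R) = D$ is easy once $R$ is known to be a subring: the inclusion $D(R) \subseteq D$ is the defining property of $R$ applied to the lowest-order surviving term of an element, while $D \subseteq D(R)$ follows because for each $(a, b) \in D$ the element $p^b x^a$ is a nonzero element of $R$ of valuation $(a, b)$.

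The main obstacle is closure under addition. If $f = \sum c_a x^a$ and $g = \sum d_a x^a$ lie in $R$ with $c_a + d_a \ne 0$ at some $a$, the non-Archimedean inequality gives $\nu_1(c_a + d_a) \ge \min\{\nu_1(c_a), \nu_1(d_a)\}$; but one only knows that $(a, \min\{\nu_1(c_a), \nu_1(d_a)\})$ lies in $D$, not the potentially larger element $(a, \nu_1(c_a + d_a))$. Here the hypothesis $(0, i) \in D$ becomes crucial: since $(0, 1) \in D$ and $D$ is closed under defined partial additions, every $(a, j) \in D$ propagates upward to $(a, j+1), (a, j+2), \ldots$ in the allowed range by repeatedly adding $(0, 1)$. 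This upgrades the non-Archimedean inequality to actual membership in $D$. Closure under multiplication reduces to the monomial case $p^{b_1}x^{a_1}\cdot p^{b_2}x^{a_2} = p^{b_1+b_2}x^{a_1+a_2}$, which is either zero in $R_{n,N,k}$ or has exponent $(a_1+a_2, b_1+b_2) = (a_1,b_1)+(a_2,b_2) \in D$; and $1 \in R$ is immediate from $(0,0) \in D$. All the required pieces follow from the strict partial-valuation framework developed earlier.
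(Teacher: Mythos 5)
Your proposal is essentially correct, and there is nothing in the paper to measure it against: the paper states Proposition \ref{prop-42} without proof (Section \ref{sec-5} explicitly omits most proofs as analogous to the $\K[x]/x^n$ case), and your construction -- the ``coefficientwise'' subring of all $\sum_a c_ax^a$ with $(a,\nu_1(c_a))\in D$ for every nonzero $c_a$ -- is the natural way to realize a prescribed shape. The forward direction via Proposition \ref{prop-9} together with $\nu(p^i)=(0,i)$ is right (you have silently corrected the paper's misprint $(p^i,0)$ to $(0,i)$, which is clearly the intended reading), and your key observation is the correct one: since $(0,1)\in D$ and $D$ is closed under defined sums, membership propagates upward, $(a,j)\in D\Rightarrow(a,j')\in D$ for all $j\le j'\le N-1$, which upgrades the non-Archimedean inequality $\nu_1(c_a+d_a)\ge\min\{\nu_1(c_a),\nu_1(d_a)\}$ to membership and gives additive closure. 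Multiplicative closure then does reduce to monomials as you say, because $fg$ distributes into products $c_ad_bx^{a+b}$, each of which is zero or has exponent $(a,b)$-sum lying in $D$ by the sub-partial-monoid property, and one finishes by the additive closure already proved; the degenerate case $N=1$ needs no propagation since all second coordinates vanish.

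One point should be stated as a hypothesis rather than a parenthesis: your claim that such a $D$ ``is necessarily contained in $\nu(R_{n,N,k})$'' is not a consequence of the right-hand side of the proposition. When $k<N$ the image of $\nu$ omits the points $(n-1,b)$ with $b\ge k$, because $p^kx^{n-1}=0$ in $R_{n,N,k}$; for instance $D=\mathcal{M}_{n,N}$ satisfies all the stated conditions but is not the shape of any subring, and your verification of $D\subseteq D(R)$ breaks exactly there, since the witness $p^bx^{n-1}$ vanishes for $b\ge k$. So the proposition as literally written is only correct for $k=N$ (where the image of $\nu$ is all of $\mathcal{M}_{n,N}$); for $k<N$ one must add the requirement $D\subseteq\nu(R_{n,N,k})$, i.e.\ that $D$ contains no $(n-1,b)$ with $b\ge k$, and with that proviso your argument goes through verbatim. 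Make that correction explicit and your proof is complete.
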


\begin{prop}  Let $D\subseteq \mathcal{M}_{n,N}$ a sub-partial-monoid, $R\subseteq R_{n,N,k+1}$ subring, $D(R)=D$.
\begin{enumerate}
\item If $(n-1, k)\in D$, the mapping $R\mapsto R/(p^kx^{n-1})$ induces a bijection of sets  $\mathcal{S}_{n,N,k+1}(D)\mapsto  \mathcal{S}_{n,N,k}(D\setminus \{(n-1, k)\})$
\item If $(n-1, k)\notin D$, the mapping $R\mapsto R/((p^kx^{n-1})\cap R)\cong R$, induces a mapping $\mathcal{S}_{n,N,k+1}(D)\mapsto  \mathcal{S}_{n,N,k}(D)$. Moreover, for $D_1\neq D_2$ sub-partial-monoids of $\mathcal{M}_{n,N}$ such that $(n-1,k)\notin D_1$, $(n-1,k)\notin D_2$, the images of $\mathcal{S}_{n+1}(D_1)$ and $\mathcal{S}_{n+1}(D_2)$ are disjoint, under the mapping just described.
\item If $(n-1, k)\notin D$, the mapping above $\mathcal{S}_{n,N,k+1}(D)\mapsto  \mathcal{S}_{n,N,k}(D)$ is surjective $\iff$ for all $B\in \mathcal{S}_{n,N,k}(D)$ the ring $R=\varphi^{-1}(B)$ possesses a subring $A$ mapping isomorphically to $B$ $\iff$ the kernel $Ker(R\to B)$ doesn't lie in $\eme_R^2+pR$ $\iff$ $d(R)=d(B)+1$.

\end{enumerate}
\end{prop}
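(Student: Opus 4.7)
The plan is to mirror the proof of Proposition \ref{prop-28} step by step, replacing the coefficient-field tools by their mixed-characteristic counterparts: the valuation $\nu$ on $R_{n,N,k}$, the compatibility result Proposition \ref{prop-36}, and Theorem \ref{thm-39} (which is just Theorem \ref{main-thm} part 2 applied to the minimal extension $\varphi: R_{n,N,k+1}\to R_{n,N,k}$ whose kernel is $(p^k x^{n-1})$). Throughout, for any $B\subseteq R_{n,N,k}$ we let $\varphi^{-1}(B)\subseteq R_{n,N,k+1}$ denote the preimage subring.

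For part (1), if $(n-1,k)\in D$ then $p^k x^{n-1}\in R$, so $R/(p^k x^{n-1})$ is well-defined as a subring of $R_{n,N,k}$. By Proposition \ref{prop-36}, $D(R/(p^kx^{n-1}))=D(R)\cap \nu(R_{n,N,k}) = D\setminus\{(n-1,k)\}$, so the map lands in $\mathcal{S}_{n,N,k}(D\setminus\{(n-1,k)\})$. The inverse $B\mapsto \varphi^{-1}(B)$ is well-defined because preimages of subrings under a surjection are subrings, and again by Proposition \ref{prop-36} the exponent set jumps back exactly by $(n-1,k)$ (which is forced to be in $\varphi^{-1}(B)$ since it contains the full kernel). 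Checking that the two compositions are the identity is formal.

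For part (2), if $(n-1,k)\notin D$ then $p^k x^{n-1}\notin R$, so $(p^kx^{n-1})\cap R=0$, giving $R\cong R/((p^kx^{n-1})\cap R)\hookrightarrow R_{n,N,k}$. By Proposition \ref{prop-36} the exponent set is preserved, so the map lands in $\mathcal{S}_{n,N,k}(D)$. For disjointness, if $A_1\in\mathcal{S}_{n,N,k+1}(D_1)$ and $A_2\in\mathcal{S}_{n,N,k+1}(D_2)$ shared an image $B$, then Proposition \ref{prop-36} would give $D(B)=D_1=D_2$, contradicting $D_1\neq D_2$.

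For part (3), the first equivalence is just the definition of surjectivity of the map described in (2). The second equivalence is Theorem \ref{thm-39}: having a subring $A\subseteq R=\varphi^{-1}(B)$ that maps isomorphically onto $B$ is equivalent to $\mathrm{Ker}(R\to B)\not\subseteq \eme_R^2+pR$. For the final equivalence $\mathrm{Ker}(R\to B)\not\subseteq \eme_R^2+pR \iff d(R)=d(B)+1$, the argument is the mixed-characteristic analogue of Proposition \ref{prop-8}: the surjection $\eme_R\twoheadrightarrow \eme_B$ induces an isomorphism $\eme_R/(\eme_R^2+pR+\mathrm{Ker}(\varphi))\cong \eme_B/(\eme_B^2+pB)$, hence
\[
d(B)=d(R)-\dim_{\F_q}\bigl((\eme_R^2+pR+\mathrm{Ker}(\varphi))/(\eme_R^2+pR)\bigr),
\]
and since $\mathrm{Ker}(\varphi)$ is one-dimensional over the residue field, the correction term is $0$ or $1$ according as $\mathrm{Ker}(\varphi)\subseteq \eme_R^2+pR$ or not.

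The main obstacle I anticipate is the last equivalence: while essentially identical to Proposition \ref{prop-8}, that proposition was stated only in the coefficient-field setting, so I have to spell out the dimension-count argument for $\dim_{\F_q}\eme/(\eme^2+pR)$ rather than $\dim_\K \eme/\eme^2$. Once this computation is carried out (using that both $R$ and $B$ share the residue field $\F_q$ and that $\mathrm{Ker}(\varphi)$ is annihilated by $\eme_R$ and by $p$, hence is a one-dimensional $\F_q$-vector space), the rest of the proof is bookkeeping parallel to Proposition \ref{prop-28}.
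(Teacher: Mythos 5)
Your proposal is correct and follows essentially the same route as the paper: part (1) and (2) via the valuation-compatibility of Proposition \ref{prop-36}, and part (3) via Theorem \ref{thm-39} (Theorem \ref{main-thm} part 2) together with the mixed-characteristic analogue of the dimension count in Proposition \ref{prop-8}, which the paper only cites by analogy and you spell out explicitly (with the harmless slip that the residue field here is $\F_p$ rather than a general $\F_q$). No gaps.
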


\begin{proof} 
\begin{enumerate}
\item By Proposition \ref{prop-36}, the mapping is well defined, and so is the inverse mapping $B\mapsto \varphi^{-1}(B)$, $B\subseteq R_{n,N,k}$. It's immediate to see that the composition in both ways yields the identity on the sets $\mathcal{S}$.
\item By Proposition \ref{prop-36}, for $(n-1,k)\notin D$, the the mapping $R\mapsto R/((p^kx^{n-1})\cap R)\cong R$ induces an equality of sets $D(R)=D(R/((p^kx^{n-1})\cap R))$, hence both claims follow. 
\item The last two equivalences follow in the same way as Proposition \ref{prop-28} follows from Proposition \ref{prop-8}. For the first one, to have a surjective map $\mathcal{S}_{n,N,k}(D)\mapsto  \mathcal{S}_{n,N,k}(D)$ amounts to have that $R$ is not the only ring mapping to $B$, which by theory of minimal extensions (Theorem \ref{main-thm}), is the same as saying $R$ has a subring mapping isomorphically onto $B$.
\end{enumerate}
\end{proof}

We can proceed to estimate the number of subrings. For $D\subseteq \mathcal{M}_{n, N}$ a sub-partial-monoid containing $(p^i,0)$ for all $0\leq i\leq N-1$, let $\epsilon(D)$ be the following quantity recursively defined, with $d(D)$ being the minimal number of generators (so $d(\{0\})=0$)

  \[ \epsilon_{n, N, k}(D) = \begin{cases}
              0 &\text{if~} n=1 \\
               \epsilon_{n,N,k-1}(D\setminus \{(n-1,k-1)\})     & \text{if~} (n-1, k-1)\in D, n > 1\\
               d(D)-1+\epsilon_{n,N,k-1}(D)     & \text{if~} (n-1,k-1)\notin D, n > 1\\
            \end{cases} \]
           
\noindent Here's the main counting result:

\begin{thm}\label{thm-43} Let $D\subseteq \mathcal{M}_{n,N}$. Then $\#(\mathcal{S}_{n,N,k}(D))$ is at most $p^{\epsilon_{n,N,k}(D)}$.
\end{thm}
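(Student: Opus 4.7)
The plan is to induct on the pair $(n, k)$ in lexicographic order (equivalently, on the size of $R_{n,N,k}$), completely parallel to the counting theorem proved earlier for $\K[x]/x^n$ in Section \ref{sec-4}. The engine of the induction is the minimal extension $\varphi: R_{n,N,k} \to R_{n,N,k-1}$ from the lemma preceding Theorem \ref{thm-39}, whose kernel is generated by $p^{k-1} x^{n-1}$ with valuation $(n-1, k-1)$; this is precisely the element of $\mathcal{M}_{n,N}$ whose membership in $D$ drives the case split in the recursive definition of $\epsilon_{n,N,k}$. The base case $n = 1$ is immediate: $R_{1,N,k} \cong \Z/p^k$ has a unique subring containing $1$, namely itself, so $\#(\mathcal{S}_{1,N,k}(D)) \leq 1 = p^0 = p^{\epsilon_{1,N,k}(D)}$ for the unique admissible $D$ and vacuously $0$ otherwise.

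For the inductive step I would fix $D$ and split on whether $(n-1, k-1) \in D$. If it is, the first part of the shape-reduction proposition preceding Theorem \ref{thm-43} (the $R \mapsto R/(p^{k-1} x^{n-1})$ bijection) gives $\#(\mathcal{S}_{n,N,k}(D)) = \#(\mathcal{S}_{n,N,k-1}(D \setminus \{(n-1, k-1)\}))$, and the inductive bound together with the second clause of the recursion closes the case. If $(n-1, k-1) \notin D$, the second and third parts of that same proposition produce a well-defined (not necessarily surjective) map $\mathcal{S}_{n,N,k}(D) \to \mathcal{S}_{n,N,k-1}(D)$; the corollary to Theorem \ref{thm-39} bounds each fiber over $B$ by $p^{d(B)}$, and the proposition bounding $\dim_{\F_p}(\eme_R/(\eme_R^2 + pR))$ by $d(D(R)) - 1$ gives $d(B) \leq d(D) - 1$. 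Multiplying and applying the inductive hypothesis yields
\[
\#(\mathcal{S}_{n,N,k}(D)) \leq p^{d(D)-1} \cdot \#(\mathcal{S}_{n,N,k-1}(D)) \leq p^{d(D) - 1 + \epsilon_{n,N,k-1}(D)} = p^{\epsilon_{n,N,k}(D)},
\]
the last equality being the third clause of the recursion.

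The only real subtlety beyond bookkeeping is verifying that the hypothesis $(n-1, k-1) \notin D$ on the ambient ring indeed forces every $A$ in each fiber to satisfy the hypothesis of the corollary to Theorem \ref{thm-39}, so that $\varphi|_A : A \to B$ is an isomorphism and the count $p^{d(A)} = p^{d(B)}$ applies; one also has to handle the $k = 1$ boundary where the induction passes through $R_{n,N,0} = R_{n-1,N,N}$ to continue into smaller $n$. Neither causes real difficulty because the recursion defining $\epsilon$ was tailored exactly to reflect the case analysis of the minimal extension, in the same way that the recursion for $e_n$ did in Section \ref{sec-4}.
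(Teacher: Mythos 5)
Your proposal is correct and is essentially the paper's own argument: the paper omits the proof of Theorem \ref{thm-43} as analogous to the counting theorem for $\F_q[x]/x^n$ in Section \ref{sec-4}, and your induction via the minimal extension $R_{n,N,k}\to R_{n,N,k-1}$, the shape bijection when $(n-1,k-1)\in D$, and the fiber bound $p^{d(B)}\leq p^{d(D)-1}$ from Theorem \ref{thm-39} and the bound $\dim_{\F_p}(\eme_R/(\eme_R^2+pR))\leq d(D)-1$ reproduces exactly that argument, including the correct handling of the $k=1$ boundary through $R_{n,N,0}=R_{n-1,N,N}$.
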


\begin{prop} For $D\subseteq \mathcal{M}_{n,N}$ sub-partial-monoid:
\begin{itemize}
\item If $(n-1, k)\in D$, $\#(\mathcal{S}_{n, N, k+1}(D)) = \#(\mathcal{S}_{n, N, k}(D\setminus \{(n-1,k)\}))$
\item If $(n-1, k)\notin D$, $\#(\mathcal{S}_{n,N,k+1}(D))\leq p^{d(D)-1}\#(\mathcal{S}_{n}(D))$ and furthermore, if for all $B\in \mathcal{S}_{n,N,k}(D)$, $d(\varphi^{-1}(B))=d(B)+1$, then $\#(\mathcal{S}_{n,N,k+1}(D))= p^{d(D)-1}\#(\mathcal{S}_{n,N,k}(D))$
\end{itemize}
\end{prop}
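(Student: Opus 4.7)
The plan is to handle the two bullets separately, each reducing to Theorem~\ref{thm-39} (the minimal-extension counting result applied to $\varphi: R_{n,N,k+1}\to R_{n,N,k}$) together with the preceding proposition describing the fibers of the restriction map between the shape strata. This is the direct analog of the counting argument following Proposition~\ref{prop-28} in Section~\ref{sec-4}.

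For the first bullet, the preceding proposition (its first part) asserts that when $(n-1,k)\in D$, the mapping $R\mapsto R/(p^{k}x^{n-1})$ is a bijection $\mathcal{S}_{n,N,k+1}(D)\to \mathcal{S}_{n,N,k}(D\setminus\{(n-1,k)\})$, so the cardinalities agree and nothing more is needed.

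For the inequality in the second bullet, the preceding proposition (its second part) supplies a well-defined map $\Phi:\mathcal{S}_{n,N,k+1}(D)\to \mathcal{S}_{n,N,k}(D)$. Fix $B$ in the image of $\Phi$, set $R:=\varphi^{-1}(B)\subseteq R_{n,N,k+1}$, and observe that the fiber $\Phi^{-1}(B)$ consists precisely of those subrings $A\subseteq R$ mapping isomorphically onto $B$ (such $A$ automatically have shape $D$, since $(n-1,k)\notin D$ and the isomorphism $A\cong B$ preserves exponents by Proposition~\ref{prop-36}). By Theorem~\ref{thm-39} this fiber is either empty or has exactly $p^{d(B)}$ elements, where $d(B)=\dim_{\F_p}(\eme_B/(\eme_B^2+pB))$. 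The dimension bound from the proposition immediately following Proposition~\ref{prop-36}, namely $d(R')\leq d(D(R'))-1$ for any subring $R'$, gives $d(B)\leq d(D)-1$, so each fiber has size at most $p^{d(D)-1}$, and summing over the base yields the claimed inequality.

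For the equality, assuming $d(\varphi^{-1}(B))=d(B)+1$ for every $B\in\mathcal{S}_{n,N,k}(D)$, the preceding proposition (its third part) ensures $\Phi$ is surjective, so every fiber is nonempty and contains precisely $p^{d(B)}$ elements. The main obstacle, which I expect to require the most care, is to upgrade the bound $d(B)\leq d(D)-1$ to the uniform equality $d(B)=d(D)-1$ across all fibers. The plan is to mirror the discussion that follows Proposition~\ref{prop-22} in Section~\ref{sec-4}: the hypothesis $d(R)=d(B)+1$ forces the kernel $(p^{k}x^{n-1})$ to miss $\eme_R^2+pR$, so $(n-1,k)$ is a new minimal generator of $D(R)=D\cup\{(n-1,k)\}$ and hence $d(D(R))=d(D)+1$; combining this with the two inequalities $d(R)\leq d(D(R))-1$ and $d(B)\leq d(D)-1$ forces both to be equalities, giving $d(B)=d(D)-1$ as desired. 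Each fiber then has exactly $p^{d(D)-1}$ elements, and the identity $\#(\mathcal{S}_{n,N,k+1}(D))=p^{d(D)-1}\,\#(\mathcal{S}_{n,N,k}(D))$ follows.
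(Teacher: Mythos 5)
Your first bullet and the inequality in the second bullet are fine and follow the intended route: the fibers of the map $\mathcal{S}_{n,N,k+1}(D)\to\mathcal{S}_{n,N,k}(D)$ over $B$ are exactly the subrings of $R=\varphi^{-1}(B)$ mapping isomorphically onto $B$, Theorem \ref{thm-39} gives $0$ or $p^{d(B)}$ of them, and $d(B)\le d(D)-1$ bounds each fiber. The problem is the final step of the equality part, precisely the step you flag as the main obstacle. From $d(R)=d(B)+1$, $d(D(R))=d(D)+1$, $d(R)\le d(D(R))-1$ and $d(B)\le d(D)-1$ you can only conclude $d(B)+1\le d(D)$, i.e.\ the inequality you already had: all four relations point in the same direction, so nothing is ``sandwiched'' and the claimed upgrade to $d(B)=d(D)-1$ is a non sequitur. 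Worse, the per-$B$ implication you are trying to prove ($d(\varphi^{-1}(B))=d(B)+1\Rightarrow d(B)=d(D(B))-1$) is simply false: take the paper's own family from Section \ref{sec-4.3} with $a\ge 7$, i.e.\ $B\subseteq\K[x]/x^{2a+6}$ generated by $x^a+x^{a+3},x^{a+1},x^{a+2}$, which has $d(B)=3<4=d(E(B))$; for $R=\varphi^{-1}(B)\subseteq\K[x]/x^{2a+7}$ one checks that $\eme_R^2$ is spanned by the six quadratic products and does not contain $x^{2a+6}$ (triple products vanish since $3a>2a+6$), so $d(R)=d(B)+1$ while $d(B)<d(E(B))$; here $d(E(R))=d(E(B))+1$ as well, so your inequalities are all satisfied strictly and detect nothing. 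The same phenomenon transplants to the $R_{n,N,k}$ setting.

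What the equality actually needs is the uniform statement that $d(B)=d(D)-1$ for \emph{every} $B\in\mathcal{S}_{n,N,k}(D)$, and this is exactly what the paper's own proof of the $\K[x]/x^n$ analogue (the proposition following Theorem on $\#(\mathcal{S}_n(E))$) inserts as an explicit proviso (``provided for all $B$, $d(B)=d(E)$''); the Section 5 statement is left unproved as ``analogous.'' So either that uniform dimension condition must be added as a hypothesis, or one must exploit the fact that the stated hypothesis quantifies over all $B$ in the stratum at once by some genuinely global argument---your fiberwise dimension chase cannot produce it. As written, the equality claim in your proposal is unproved.
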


\begin{prop} Let $B\subseteq R_{n,N,k}$ subalgebra and $R=\varphi^{-1}(B)$. If $d(R)=d(D(R))-1$, then $d(B)=d(D(B))-1$.
\end{prop}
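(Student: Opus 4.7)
The plan is to mirror the proof of the analogous proposition for $\K[x]/x^n$ at the end of Section~\ref{sec-4}, while tracking the ``$-1$'' shift coming from the fact that $\nu(p)=(0,1)$ is always a minimal generator of $D(R)$ but $p$ itself is zero in $\eme_R/(\eme_R^2+pR)$. First, since $R=\varphi^{-1}(B)$ automatically contains $\mathrm{Ker}(\varphi)=(p^{k}x^{n-1})$, we have $(n-1,k)\in D(R)$, and Proposition~\ref{prop-36} gives the disjoint decomposition $D(R)=D(B)\sqcup\{(n-1,k)\}$. Because $\varphi\colon R\to B$ is a minimal extension whose kernel is one-dimensional over $R/\eme_R=\F_p$, the induced surjection $\eme_R/(\eme_R^2+pR)\twoheadrightarrow \eme_B/(\eme_B^2+pB)$ has kernel of dimension $0$ or $1$, forcing $d(R)\in\{d(B),d(B)+1\}$ according to whether $\mathrm{Ker}(\varphi)\subseteq\eme_R^2+pR$ or not.

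Next I would record the combinatorial inequality $d(D(B))\le d(D(R))$, with equality iff $(n-1,k)$ is not a minimal generator of $D(R)$. This is immediate since $(n-1,k)$ is the maximum element of $D(R)$ in the lex order: any $b\in D(B)$ is strictly smaller than $(n-1,k)$, so in any expression of $b$ as a partial-monoid sum of minimal generators of $D(R)$ the generator $(n-1,k)$ cannot appear; hence the minimal generators of $D(R)$ other than $(n-1,k)$ already form a generating set for $D(B)$.

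With these ingredients in place I split into two cases. If $d(R)=d(B)$, combining the hypothesis $d(R)=d(D(R))-1$ with the general bound $d(B)\le d(D(B))-1$ (the Section~\ref{sec-5} analog of Proposition~\ref{prop-22}) yields $d(D(R))\le d(D(B))$, which together with the combinatorial inequality just noted forces equality $d(D(R))=d(D(B))$, so that $d(B)=d(R)=d(D(R))-1=d(D(B))-1$. If instead $d(R)=d(B)+1$, then $\mathrm{Ker}(\varphi)\not\subseteq \eme_R^2+pR$, and in particular $p^{k}x^{n-1}\notin\eme_R^2$; the contrapositive of the Section~\ref{sec-5} analog of Lemma~\ref{lem-20} (applied to the top element $(n-1,k)$ of $R_{n,N,k+1}$) then forces $(n-1,k)$ to be a minimal generator of $D(R)$, so $d(D(R))=d(D(B))+1$ and $d(B)=d(R)-1=d(D(R))-2=d(D(B))-1$. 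The main obstacle is not any individual step but rather the careful ``$-1$'' bookkeeping throughout; once the role of $(0,1)=\nu(p)$ as a $D$-generator is cleanly separated from its triviality modulo $\eme_R^2+pR$, the case analysis runs in direct parallel to the argument in Section~\ref{sec-4}.
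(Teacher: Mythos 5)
Your proof is correct and follows essentially the same route as the paper's intended argument: the paper omits the proof in Section \ref{sec-5}, deferring to the analogous proposition for $\K[x]/x^n$, and your two-case analysis ($d(R)=d(B)$ using the bound $d(B)\le d(D(B))-1$, versus $d(R)=d(B)+1$ using the contrapositive of the Section \ref{sec-5} analog of Lemma \ref{lem-20} to show $(n-1,k)$ is a generator of $D(R)$) is exactly that transposition, with the $-1$ bookkeeping handled correctly.
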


\begin{prop} With the same setup as above, assume $d(R)=d(B)+1$. If $d(B)=d(D(B))-1$ then $d(R)=d(D(R))-1$.
\end{prop}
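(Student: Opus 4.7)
The plan is to mirror the argument of the analogous proposition in the $\K[x]/x^n$ setting: use the hypothesis $d(R) = d(B) + 1$ to force $Ker(\varphi)$ outside $\eme_R^2 + pR$, and then invoke the generator-lemma from Section 5.2 to show that the exponent $(n-1,k)$ must be an additional minimal generator of $D(R)$.

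First I would invoke Theorem \ref{main-thm} part 2 to translate the hypothesis $d(R) = d(B) + 1$ into the ideal-theoretic assertion $Ker(\varphi : R \to B) \not\subseteq \eme_R^2 + pR$. Since $R = \varphi^{-1}(B)$ contains the kernel $(p^k x^{n-1})$ of the ambient projection $R_{n,N,k+1} \to R_{n,N,k}$, this means $p^k x^{n-1} \in R$ while $p^k x^{n-1} \notin \eme_R^2 + pR$. In particular, Proposition \ref{prop-36} yields $D(R) = D(B) \cup \{(n-1,k)\}$.

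Next, apply the generator-lemma stated just before Proposition \ref{prop-36}: if $(n-1,k) \in D(R)$ were not a minimal generator, then $p^k x^{n-1}$ would lie in $\eme_R^2 \subseteq \eme_R^2 + pR$, contradicting the previous step. Hence $(n-1,k)$ is a new minimal generator of $D(R)$. Moreover, since every exponent $(n-1,j)$ with $j > k$ vanishes in $R_{n,N,k+1}$, the element $(n-1,k)$ is the maximum of $D(R)$ under the lexicographic order on $\mathcal{M}_{n,N}$; hence $(n-1,k)$ can never appear as a summand in any nontrivial decomposition of an element of $D(B)$, and the minimal generators of $D(B)$ remain irredundant in $D(R)$. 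Combined with the previous sentence, this gives $d(D(R)) = d(D(B)) + 1$.

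Combining everything: $d(R) = d(B) + 1 = (d(D(B)) - 1) + 1 = d(D(B)) = d(D(R)) - 1$, as required. The most delicate point is the claim $d(D(R)) = d(D(B)) + 1$ (rather than possibly staying the same), which rests on the maximality of $(n-1,k)$ in the image of $\nu$; this is the only real obstacle I anticipate, and it follows quickly from the structure of $\mathcal{M}_{n,N}$ and the vanishing relations defining $R_{n,N,k+1}$.
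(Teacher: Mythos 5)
Your argument is correct and is essentially the paper's intended proof: the paper omits it as "analogous" to the $\K[x]/x^n$ case, whose proof is exactly your chain — $d(R)=d(B)+1$ forces $Ker(\varphi)=(p^kx^{n-1})\not\subseteq\eme_R^2+pR$, the generator lemma of Section 5.2 then shows $(n-1,k)$ must be a minimal generator of $D(R)$, whence $d(D(R))=d(D(B))+1$. Your explicit remark that $(n-1,k)$ is maximal in $D(R)$ and so cannot occur in decompositions of elements of $D(B)$ is a detail the paper leaves implicit, and it is handled correctly.
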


\Addresses


\begin{thebibliography}{1}

\bibitem{AKKM} S. Atanasov, N. Kaplan, B. Krakoff, and J. Menzel, Counting finite index subrings of $\Z^n$ and $\Z[x]/(x^n)$, arxiv.org/abs/1609.06433

\bibitem{Atiyah} M.F. Atiyah, I.G. Macdonald, {\em Introduction to Commutative Algebra.} Addison--Wesley, Reading, MA, 1969.

\bibitem{Franco} Franco Munoz, Subrings of finite commutative rings.
  
\bibitem{Ganske} Ganske, G.; McDonald, B.R., Finite local rings. Rocky Mountain J. Math. 3 (1973), no. 4, 521-540.
    
\bibitem{MacDonald} MacDonald B. R., Finite rings with identity, M. Dekker (1974).

\bibitem{RS} J. C. Rosales, J. C and P. A. Garc\'{i}a-S\'{a}nchez, P. A. Numerical semigroups. Developments in Mathematics, 20. Springer, New York, 2009

\bibitem{SP} The Stacks Project {https://stacks.math.columbia.edu/}

\end{thebibliography}
\end{document}